\newtheorem{theorem}{Theorem}
\newtheorem{definition}[theorem]{Definition}
\newtheorem{lemma}[theorem]{Lemma}
\newtheorem{proposition}[theorem]{Proposition}
\newtheorem{remark}[theorem]{Remark}
\def\func#1{\mathop{\rm #1}}%
\title[Periodic hydroelastic waves II]{Periodic traveling interfacial hydroelastic waves with or without mass II:  Multiple bifurcations and ripples}
\author{Benjamin F. Akers}
\address{Department of Mathematics and Statistics,
Air Force Institute of Technology, 2950 Hobson Way, WPAFB, OH 45433 USA}
\email{benjamin.akers@afit.edu}
\author{David M. Ambrose}
\address{Department of Mathematics, Drexel University, 3141 Chestnut St., Philadelphia, PA 19104}
\email{dma68@drexel.edu}
\author{David W. Sulon}
\address{Department of Mathematics, Drexel University, 3141 Chestnut St., Philadelphia, PA 19104}
\email{dws57@drexel.edu}
\begin{document}

\begin{abstract} In a prior work, the authors proved a global bifurcation theorem for spatially periodic
interfacial hydroelastic traveling waves on infinite depth, and computed such traveling waves.  The formulation
of the traveling wave problem used both analytically and numerically allows for waves with multi-valued height.
The global bifurcation theorem required a one-dimensional kernel in the linearization of the relevant mapping, but
for some parameter values, the kernel is instead two-dimensional.  In the present work, we study these cases 
with two-dimensional kernels, which occur in resonant and non-resonant variants.  We apply an implicit function theorem argument to prove existence of traveling waves in both of these situations.  
%For particular parameter configurations in the resonant case, 
%we proceed non-rigorously for the time being, 
%and develop asymptotics of the waves, which are Wilton ripples.  
We compute the waves
numerically as well, in both the resonant and non-resonant cases.

\end{abstract}

\maketitle

\section{Introduction}

This is a continuation of the work \cite{AkersAmbroseSulonPeriodicHydroelasticWaves}, in which the authors
studied spatially periodic traveling waves for an interfacial configuation of two irrotational, incompressible, infinitely deep
fluids, separated by a sharp interface, allowing for hydroelastic effects at the interface, with zero or positive mass density
of the elastic sheet.  The main results of \cite{AkersAmbroseSulonPeriodicHydroelasticWaves} are a global
bifurcation theorem proving existence of such traveling waves and enumerating various ways in which the branches
may end, and numerical computations of branches of these waves.  The global bifurcation theorem was based on 
an abstract result of ``identity-plus-compact'' type \cite{KielhoferBifurcationBook}, and required a one-dimensional 
kernel in the relevant linearized mapping.  However, for certain parameter values, the kernel is instead two-dimensional.
We explore these two-dimensional cases here.

The global bifurcation theorem of \cite{AkersAmbroseSulonPeriodicHydroelasticWaves} is a generalization of the 
global bifurcation theorem proved by the second author, Strauss, and Wright for vortex sheets with surface tension
\cite{AmbroseStraussWrightGlobBifurc}.  In the case of vortex sheets with surface tension, two-dimensional kernels
were also encountered, but were not investigated further.  Numerical computations of branches are not contained in 
\cite{AmbroseStraussWrightGlobBifurc}, but can be found instead in \cite{akersAmbrosePondWright}.
All of the works discussed thus far use the formulation for traveling waves introduced in 
\cite{AkersAmbroseWrightTravelingWavesSurfaceTension}, 
which allows for waves with multi-valued height by developing a traveling wave ansatz
for a parameterized curve.

The hydroelastic wave problem models the motion of a free surface which bears elastic effects; two primary
examples are ice sheets on the ocean \cite{Squire1995}, or flapping flags \cite{Alben2008}.  We use here the
Cosserat theory of elastic shells to model the elastic effects, as developed and described by Plotnikov and Toland
\cite{plotnikovToland}.  Other possible models for elastic effects are linear models or Kirchoff--Love models, but the Cosserat
theory is more suitable for large deformations, like those for which we demonstrate numerical results below.
The second author and Siegel, and Liu and the second author, have shown that the initial value problem for the
hydroelastic problem is well-posed in the case of zero mass density of the sheet and in the case of positive
mass density of the sheet \cite{ambroseSiegelWPHydro}, \cite{LiuAmbroseWellPosednessMass}.
A number of other authors have studied traveling hydroelastic waves, either rigorously or computationally, in various
cases (such as periodic or solitary waves, with or without mass, in two spatial dimensions or three spatial dimensions).
For instance, Toland and Baldi and Toland prove existence in the hydroelastic water wave case (a single fluid bounded above
by an elastic sheet) without mass \cite{toland2}, \cite{baldiToland2} and with mass along the sheet \cite{toland1}, 
\cite{baldiToland1}, and  Groves, Hewer, and Wahlen showed used variational methods to prove existence of solitary 
hydroelastic waves.  Guyenne and Parau have computed hydroelastic traveling waves including on finite depth
\cite{guyenneParauJFM}, \cite{guyenneParauFS}, and the group of Milewski, Wang, Vanden-Broeck have made a number
of computational studies for hydroelastic solitary waves on two-dimensional and three-dimensional fluids
  \cite{MVBWJFM}, \cite{MVBWPRSA469},
\cite{milewskiWangSAM},    
  Finally, we mention the work of Wang, Parau, Milewski, and Vanden-Broeck, in which solitary waves
 were computed in an interfacial hydroelastic situation  \cite{WVBMIMA}.
Of these other traveling wave results, the most relevant to the present work is \cite{baldiToland1}; there, for the
case of hydroelastic water waves (i.e., the case of a single fluid bounded above by an elastic surface, with vacuum
above the surface), Baldi and Toland use an implicit function theorem argument in the case of a non-resonant 
two-dimensional kernel in the
linearized operator.  Our main theorem follows their argument, and includes an extension to the resonant case inspired by the methods of \cite{EhrnstromEscherWahlen} (in this work, Ehrnstr\"om, Escher, and Wahl\'en prove existence results for traveling water waves with multiple critical layers).

In the resonant case of a two-dimensional kernel in the linearized operator, one may wish to consider one-dimensional families (throughout which surface tension is fixed) of traveling wave solutions that feature two resonant, leading-order modes at small amplitudes.  The resulting waves are known as 
Wilton ripples, after the work \cite{wilton}.  In addition to Wilton's original work, many asymptotic and numerical studies exist for the water wave problem and in approximate models \cite{akers2012wilton},  \cite{akers2016high}, \cite{haupt1988modeling},
\cite{mcgoldrick1970wilton}, \cite{okamoto2001mathematical}, \cite{reederShinbrot1}, 
 \cite{trichtchenko2016instability}, 
\cite{OlgaPreprint}, \cite{vanden2002wilton}. 
The authors are aware of a few works in the literature in which rigorous existence theory for Wilton ripples is 
developed for water waves.  The first of these is by Reeder and Shinbrot, for irrotational capillary-gravity water waves \cite{reederShinbrot2}.  In \cite{TolandJonesSymmetry}, Jones and Toland (see also \cite{TolandJones1}) use techniques (including those of Shearer \cite{ShearerSecondaryBifurc}) involving bifurcation at a two-dimensional kernel under symmetry.  More recently, Martin and Matioc proved existence of Wilton ripples for water waves in the case of constant vorticity \cite{martin2013existence}.  
%In a subsequent work, we expect to extend the existence theory of Wilton ripples to produce
%a proof in the case of interfacial flows, with or without hydroelastic effects, and in the hydroelastic case, with or without 
%mass along the sheet.

The plan of the paper is as follows.  In Section 2, we give the governing equations for the hydroelastic wave 
initial value problem, and we develop the traveling wave ansatz.  In Section 3 we state and prove our main theorem, 
which is an existence theorem for traveling waves in both the non-resonant and resonant cases of two-dimensional kernels.
In Section 4, in the case of resonant two-dimensional kernels, we develop asymptotic expansions for the Wilton 
ripples.  In Section 5, the numerical method is described and numerical results are presented, for both the
resonant and non-resonant cases.

The authors are grateful for support from the following funding agencies.
This work was supported in part from a grant from the Office of Naval Research (ONR grant APSHEL to Dr. Akers),
and in part by a grant from the National Science Foundation 
(grant DMS-1515849 to Dr. Ambrose).  The authors are also extremely grateful to the anonymous referees, whose
detailed readings of the manuscript have surely improved its quality.

\section{Governing equations}

In this section we describe the equations for the two-dimensional interfacial hydroelastic wave system.  We first
give evolution equations, and then specialize to the traveling wave problem.

\subsection{Equations of motion}

Our problem involves the same setup as in \cite{AkersAmbroseSulonPeriodicHydroelasticWaves}, and closely follows that of \cite{AmbroseStraussWrightGlobBifurc} and \cite{LiuAmbroseWellPosednessMass}. We consider two two-dimensional fluids which are infinite in the vertical direction and periodic in the horizontal direction. \ The two fluids are separated by a one-dimensional free interface $I$; the lower fluid has mass density $\rho_1 \geq 0$, the upper fluid has mass density $\rho_2 \geq 0$ (with $\rho_{1}$ and $\rho_{2}$ not both zero), and the interface itself has mass density $\rho \geq 0$.

We assume each fluid is irrotational and incompressible; in the interior of each fluid region, the fluid's velocity $u$ is determined by the Euler equations
\begin{eqnarray*}
u_t + u \cdot \nabla u &=& - \nabla p, \\
\func{div} \left( u \right) &=& 0, \\
u &=& \nabla \phi.
\end{eqnarray*}
However, nonzero, measure-valued vorticity may be present along the interface, since $u$ is allowed to be discontinuous across $I$. We write the vorticity as $\gamma \in \mathbb{R}$ multiplied by the Dirac mass of $I$; the amplitude $\gamma$ (which may vary along $I$) is called the ``unnormalized vortex sheet-strength" \cite{AkersAmbroseSulonPeriodicHydroelasticWaves},   \cite{AmbroseStraussWrightGlobBifurc}.

With the canonical identification of our overall region $\mathbb{R}^{2}$ with $\mathbb{C}$, we parametrize $I$ as a curve 
\begin{equation*}
z\left(\alpha,t\right) = x\left(\alpha,t\right) + i y \left(\alpha,t\right),
\end{equation*}
where $\alpha$ is the spatial parameter along $I$, and $t$ represents time. We impose periodicity conditions
\begin{eqnarray}
x\left( \alpha +2\pi ,t\right) &=&x\left( a,t\right) +M
\label{periodicity_conditions_x}, \\
y\left( \alpha +2\pi ,t\right) &=&y\left( \alpha ,t\right),
\label{periodicity_conditions_y}
\end{eqnarray}
where $M>0$. The unit tangent and upward normal vectors $T$ and $N$ are (in complex form)
\begin{eqnarray*}
T &=&\frac{z_{\alpha }}{s_{\alpha }},  \\ 
N &=&i\frac{z_{\alpha }}{s_{\alpha }},
\end{eqnarray*}
where the arclength element $s_{\alpha}$ (which is the derivative of the arclength as measured from a specified point)
is defined by
\begin{equation}
s_{\alpha }^{2} =\left\vert z_{\alpha }\right\vert ^{2}=x_{\alpha
}^{2}+y_{\alpha }^{2}.  \label{s_alpha_definition}
\end{equation}
Then, we can decompose the velocity $z_t$ as
\begin{equation}
z_t = UN + VT,
\label{eq:zt}
\end{equation}
where $U, V$ respectively denote the normal and tangential velocities. Note that throughout the text, subscripts of $t$ or
$\alpha$ denote differentiation. 

We parametrize by normalized arclength; i.e. our parametrization ensures
\begin{equation}
s_{\alpha} = \sigma \left(t\right) := \frac{L\left(t \right)}{2 \pi}
\label{eq:sigmaDef}
\end{equation}
holds for all $t$, where
\begin{equation*}
L\left(t\right)=\int\nolimits_{0}^{2\pi }s_{\alpha } \text{ }d\alpha 
\end{equation*}
is the length of one period of the interface (this means $s_{\alpha}$ is constant with respect to $\alpha$). Furthermore, define the tangent angle
\begin{equation*}
\theta := \arctan\left(\frac{y_{\alpha}}{x_{\alpha}}\right);
\end{equation*}
it is clear that we can construct the curve $z$ from $\theta$ and $\sigma$ (up to one point), and that the curvature of the interface $\kappa$ is
\begin{equation*}
\kappa = \frac{\theta_{\alpha}}{s_{\alpha}}.
\end{equation*}

The normal velocity $U$ is entirely determined by the physics and geometry of the problem; specifically,
\begin{equation}
U=\func{Re}\left( W^{\ast }N\right) ,  \label{eq:normal}
\end{equation}%
where%
\begin{equation}
W^{\ast }\left( \alpha ,t\right) =\frac{1}{2\pi i}\func{PV}\int\nolimits_{%
%TCIMACRO{\U{211d} }%
%BeginExpansion
\mathbb{R}
%EndExpansion
}\frac{\gamma \left( \alpha ^{\prime },t\right) }{z\left( \alpha ,t\right)
-z\left( \alpha ^{\prime },t\right) }d\alpha ^{\prime }
\label{def:Wstar}
\end{equation}
is the Birkhoff-Rott integral \cite{AmbroseStraussWrightGlobBifurc} (we use ${}^\ast$ to denote the complex conjugate).

However, we are able to freely choose the tangential velocity $V$ in order to enforce (\ref{eq:sigmaDef}) at all times $t$; explicitly, let $V$ be periodic and such that
\begin{equation}
V_{\alpha }=\theta _{\alpha }U-\frac{1}{2\pi }\int\nolimits_{0}^{2\pi
}\theta _{\alpha }U\text{ }d\alpha
\label{def:Valpha}
\end{equation}
holds.  By differentiating (\ref{s_alpha_definition}) with respect to $t$, it can be easily verified (as in \cite{AkersAmbroseSulonPeriodicHydroelasticWaves}) that such a choice $V$ yields (\ref{eq:sigmaDef}) for all $t$ (as long as  (\ref{eq:sigmaDef}) holds at $t=0$).

The vortex sheet-strength $\gamma \left( \alpha ,t\right) $ (which can be written in terms of the jump in tangential velocity across $I$) plays a crucial role in the interface's evolution. Using the same model as in \cite{AkersAmbroseSulonPeriodicHydroelasticWaves} (which itself is a combination of those used in \cite{ambroseSiegelWPHydro} and \cite{LiuAmbroseWellPosednessMass}), we assume the jump in pressure across $I$ to be
\begin{equation}\label{pressureJumpEquation}
\left[ \left[ p\right] \right] =\rho \left(\func{Re}\left(W_t^\ast N\right) + V_W \theta_t \right) +  \frac{1}{2} E_{b}\left( \kappa _{ss}+\frac{\kappa ^{3}}{2%
}-\tau _{1}\kappa \right) + g \rho \func{Im}N,
\end{equation}%
where
\begin{equation*} 
V_W := V - \func{Re} \left( W^{\ast} T \right),
\end{equation*} and the constants $E_{b} \geq 0$, $\tau _{1} > 0$, and $g$ are (respectively) the bending modulus, a surface tension parameter, and acceleration due to gravity \cite{AkersAmbroseSulonPeriodicHydroelasticWaves}. 
The formula \eqref{pressureJumpEquation} is derived in detail in the appendix (Section 8) of \cite{ambroseSiegelWPHydro}
in the case without mass along the sheet.
This is found to be consistent with the model of Plotnikov and Toland \cite{plotnikovToland}
using the special Cosserat theory of elastic shells, and accounting for mass yields \eqref{pressureJumpEquation};
note that Plotnikov and Toland treat this case of positive mass.

From this, we may write an equation determining the evolution of $\gamma$ \cite{AkersAmbroseSulonPeriodicHydroelasticWaves}, \cite{LiuAmbroseWellPosednessMass}:%
\begin{eqnarray}
\;\;\;\; \gamma _{t} &=&-\frac{\widetilde{S}}{\sigma ^{3}}\left( \partial _{\alpha
}^{4}\theta +\frac{3\theta _{\alpha }^{2}\theta _{\alpha \alpha }}{2}-\tau
_{1}\sigma ^{2}\theta _{\alpha \alpha }\right) +\frac{\left( V_{W}\gamma
\right) _{\alpha }}{\sigma }-2\widetilde{A}\left( \func{Re}\left( W_{\alpha
t}^{\ast }N\right) \right) \label{penultimateGammatEqn}\\
&&-\left( 2A-\frac{2\widetilde{A}\theta _{\alpha }}{%
\sigma }\right) \left( \func{Re}\left( W_{t}^{\ast }T\right) \right)
\sigma -2\widetilde{A}\left( \left( V_{W}\right) _{\alpha }\theta
_{t}+V_{W}\theta _{t\alpha }+\frac{gx_{\alpha \alpha }}{\sigma }\right) \notag \\
&&-2A\left( \frac{\gamma \gamma _{\alpha }}{4\sigma ^{2}}-V_{W}\func{Re}\left(
W_{\alpha }^{\ast }T\right) +gy_{\alpha }\right), \notag
\end{eqnarray}%
where
\begin{equation*}
\widetilde{S} :=\frac{E_{b}}{\rho _{1}+\rho _{2}} \geq 0,
\end{equation*}
\begin{equation*}
A :=\frac{\rho _{1}-\rho _{2}}{\rho _{1}+\rho _{2}}\in [-1,1] ,
\end{equation*}
is the Atwood number, and
\begin{equation*}
\widetilde{A} := \frac{\rho }{\rho _{1}+\rho _{2}} \geq 0.
\end{equation*}%
Setting $S := \widetilde{S} \mathbin{/} \left| g \right|$, we can non-dimensionalize (\ref{penultimateGammatEqn}), and write (as in \cite{AkersAmbroseSulonPeriodicHydroelasticWaves})
\begin{eqnarray}
\;\;\;\;\;\; \gamma _{t} &=&-\frac{S}{\sigma ^{3}}\left( \partial _{\alpha }^{4}\theta +%
\frac{3\theta _{\alpha }^{2}\theta _{\alpha \alpha }}{2}-\tau _{1}\sigma
^{2}\theta _{\alpha \alpha }\right) +\frac{\left( V_{W}\gamma \right)
_{\alpha }}{\sigma }  \label{eq:gammat} \\
&&-2\widetilde{A}\left( \func{Re}\left( W_{\alpha t}^{\ast }N\right) \right)
-\left( 2A-\frac{2\widetilde{A}\theta _{\alpha }}{\sigma }\right) \left( 
\func{Re}\left( W_{t}^{\ast }T\right) \right) \sigma  \notag \\
&&-2\widetilde{A}\left( \left( V_{W}\right) _{\alpha }\theta
_{t}+V_{W}\theta _{t\alpha }+\frac{x_{\alpha \alpha }}{\sigma }\right)
-2A\left( \frac{\gamma \gamma _{\alpha }}{4\sigma ^{2}}-V_{W}\func{Re}\left(
W_{\alpha }^{\ast }T\right) +y_{\alpha }\right).  \notag
\end{eqnarray}%

Thus, in the two-dimensional hydroelastic vortex sheet problem with mass, equations (\ref{eq:zt}), (\ref{eq:normal}), (\ref{def:Wstar}), (\ref{def:Valpha}), and (\ref{eq:gammat}) together determine the motion of the interface \cite{AkersAmbroseSulonPeriodicHydroelasticWaves}.

\subsection{Traveling wave ansatz}

We are specifically interested in traveling wave solutions to the two-dimensional hydroelastic vortex sheet problem with mass. 

\begin{definition}
Suppose $\left(z,\gamma\right)$ is a solution to (\ref{eq:zt}), (\ref{eq:normal}), (\ref{def:Wstar}), (\ref{def:Valpha}), and (\ref{eq:gammat}), and additionally satisfies
\begin{equation}
\label{eq:travelingWaveAssumption}
\left(z,\gamma\right)_t = \left(c,0\right)
\end{equation}
for some parameter $c \in \mathbb{R}$.  Then, we say $\left(z,\gamma\right)$ is a traveling wave solution to  (\ref{eq:zt}), (\ref{eq:normal}), (\ref{def:Wstar}), (\ref{def:Valpha}), and (\ref{eq:gammat}) with speed $c$.
\end{definition}
\begin{remark}
The values $c$ and  $\tau_1$ will serve as the bifurcation parameters for our analysis in Section \ref{sec:bifurcThm}.
\end{remark}
Assuming (\ref{eq:travelingWaveAssumption}), the following clearly hold:
\begin{eqnarray}
\label{Uequation}U &=& -c \sin \theta, \\
\nonumber
V &=& c \cos \theta, \\
\nonumber
V_W &=& c \cos \theta - \func{Re}\left( W^{\ast }T\right), \\
\nonumber
x_{\alpha} &=& \sigma \cos \theta, \\
\nonumber
y_{\alpha} &=& \sigma \sin \theta, \\
\nonumber
\theta_t &=& 0.
\end{eqnarray}
(Note that while these equations do follow from the traveling wave ansatz \eqref{eq:travelingWaveAssumption}, a reader 
interested in further
details of the calculatuions could consult \cite{AkersAmbroseWrightTravelingWavesSurfaceTension}, \cite{AmbroseStraussWrightGlobBifurc}.)  Perhaps less clear is that \eqref{Uequation} implies, upon differentiation
with respect to $t,$ that a traveling wave must satisfy
\begin{equation}\label{gammaTEqualsZero}
\gamma_{t}=0.
\end{equation}
(Again, further details may be found in \cite{AkersAmbroseWrightTravelingWavesSurfaceTension}, and especially
 \cite{AmbroseStraussWrightGlobBifurc}.)
Also, both $W_t^\ast$ and $W_{\alpha t}^\ast$ vanish under this assumption (this can be shown by carefully differentiating under the principal value integral). Thus, noting these facts, alongside
\begin{equation*}
\left( c\cos \theta -\func{Re}\left( W^{\ast }T\right) \right) \left( \func{%
Re}\left( W_{\alpha }^{\ast }T\right) \right) =-\frac{1}{2}\partial _{\alpha
}\left\{ \left( c\cos \theta -\func{Re}\left( W^{\ast }T\right) \right)
^{2}\right\} ,
\end{equation*} (which can be computed from the above, as in \cite{AmbroseStraussWrightGlobBifurc}) we can combine (\ref{eq:gammat}) and \eqref{gammaTEqualsZero} as follows:
\begin{eqnarray}
0 &=&-\frac{S}{\sigma ^{3}}\left( \partial _{\alpha }^{4}\theta +\frac{%
3\theta _{\alpha }^{2}\theta _{\alpha \alpha }}{2}-\tau _{1}\sigma
^{2}\theta _{\alpha \alpha }\right)  \label{eq:gammaEqBeforeSigmaTauDivision} \\
&&-2\widetilde{A}\left( \cos \theta \right) _{\alpha }  
+\frac{\left( \left( c\cos \theta -\func{Re}\left( W^{\ast }T\right)
\right) \gamma \right) _{\alpha }}{\sigma }  \notag \\
&&-A\left( \frac{\partial _{\alpha }\left( \gamma ^{2}\right) }{4\sigma ^{2}}%
+2\sigma \sin \theta +\partial _{\alpha }\left\{ \left( c\cos \theta -\func{%
Re}\left( W^{\ast }T\right) \right) ^{2}\right\} \right);  \notag
\end{eqnarray}%
more details on these basic calculations can be found in \cite{AkersAmbroseSulonPeriodicHydroelasticWaves}. Following the conventions of \cite{AmbroseStraussWrightGlobBifurc} and \cite{AkersAmbroseSulonPeriodicHydroelasticWaves}, we label
\begin{multline}
\Omega \left( \theta ,\gamma ;c,\sigma \right) :=\left(
\left( c\cos \theta -\func{Re}\left( W^{\ast }T\right) \right) \gamma
\right) _{\alpha }\label{def:Omega}  \\
-A\left[ \frac{\left( \gamma ^{2}\right) _{\alpha }}{%
4\sigma }+2\sigma ^{2}\sin \theta +\sigma \partial _{\alpha }\left\{ \left(
c\cos \theta -\func{Re}\left( W^{\ast }T\right) \right) ^{2}\right\} \right],
\end{multline}
\begin{equation}
\Xi \left( \theta ;\tau_1,\sigma \right):= \frac{3}{2}\theta _{\alpha
}^{2}\theta _{\alpha \alpha } -\tau _{1}\sigma ^{2}\theta
_{\alpha \alpha } + \frac{2\widetilde{A}\sigma ^{3}}{S%
}\left( \cos \theta \right) _{\alpha } \label{def:Xi}
\end{equation}
(note that, as a matter of notation, $\Omega$ differs from its analogue in \cite{AmbroseStraussWrightGlobBifurc} and \cite{AkersAmbroseSulonPeriodicHydroelasticWaves} by a factor of $\tau_1$).  Then, by multiplying both sides of (\ref{eq:gammaEqBeforeSigmaTauDivision}) by $\sigma^3 \mathbin{/} S$, we can write (\ref{eq:gammaEqBeforeSigmaTauDivision}) as
\begin{equation}
0=\partial _{\alpha }^{4}\theta +\Xi \left( \theta ;\tau_1, \sigma \right) -\frac{\sigma ^{2}}{S}\Omega \left( \theta ,\gamma ;c,\sigma \right) .
\label{eq:gammaEq1}
\end{equation}

Recall that that the evolution of the interface is also determined by (\ref{eq:normal}). Under the traveling wave assumption, this equation becomes
\begin{equation}
0=c\sin \theta +\func{Re}\left( W^{\ast }N\right) .  \label{eq:thetaEq1}
\end{equation}

Finally, note that we can construct $z$ from $\theta$ (uniquely, up to rigid translation) via the equation
\begin{equation}
z\left( \alpha ,0\right) =z\left( \alpha ,t\right) -ct=\sigma
\int\nolimits_{0}^{\alpha }\exp \left( i\theta \left( \alpha ^{\prime
}\right) \right) \text{ }d\alpha ^{\prime };  \label{z_construction}
\end{equation}%
thus, by using (\ref{z_construction}) to produce $z$ as it appears in the Birkhoff-Rott integral (\ref{def:Wstar}), we may consider equations (\ref{eq:gammaEq1}), (\ref{eq:thetaEq1}) as being purely in terms of $\left(\theta,\gamma\right)$. However, another modification of (\ref{eq:gammaEq1}), (\ref{eq:thetaEq1})  is required to ensure that $2 \pi$-periodic solutions $\left(\theta,\gamma\right)$ yield (via (\ref{z_construction})) \emph{periodic} traveling wave solutions $\left(z,\gamma\right)$ of (\ref{eq:zt}), (\ref{eq:normal}), and (\ref{eq:gammat}). 

\subsection{Periodicity considerations}\label{subsec:perConsid}
We continue to apply the same reformulation procedure as in \cite{AkersAmbroseSulonPeriodicHydroelasticWaves}. Throughout Section \ref{subsec:perConsid}, assume that $\theta$ is $2\pi$-periodic and four times differentiable. Define
\begin{equation*}
\overline{\cos \theta }:=\frac{1}{2\pi }\int\nolimits_{0}^{2\pi }\cos
\left( \theta \left( \alpha ^{\prime }\right) \right) d\alpha ^{\prime },%
\text{ \ \ \ }\overline{\sin \theta }:=\frac{1}{2\pi }\int\nolimits_{0}^{2%
\pi }\sin \left( \theta \left( \alpha ^{\prime }\right) \right) d\alpha
^{\prime };
\end{equation*}%
throughout, we use this overline notation to indicate similar average quantities. Then, given $M>0$ and $\theta $ with nonzero $\overline{\cos \theta }$, put
\begin{equation*}
\widetilde{Z}\left[ \theta \right] \left( \alpha \right) :=\frac{M}{2\pi 
\overline{\cos \theta }}\left[ \int\nolimits_{0}^{\alpha }\exp \left(
i\theta \left( \alpha ^{\prime }\right) \right) \text{ }d\alpha ^{\prime
}-i\alpha \overline{\sin \theta }\right] ;
\end{equation*}%
we call $\widetilde{Z}\left[ \theta \right]$ the ``renormalized curve."  By direct calculation, we may show that $\widetilde{Z}\left[ \theta \right] $ satisfies the $M$-periodicity condition (\ref{periodicity_conditions_x}), (\ref{periodicity_conditions_y}), i.e.
\begin{equation*}
\widetilde{Z}\left[ \theta \right] \left( \alpha +2\pi \right) =\widetilde{Z}%
\left[ \theta \right] \left( \alpha \right) +M;
\end{equation*}%
also, $\widetilde{Z}\left[ \theta \right]$ is one derivative smoother than $\theta$. The normal and tangent vectors to $\widetilde{Z}\left[ \theta \right]$ are clearly
\begin{eqnarray*}
\widetilde{T}\left[ \theta \right] &=&\frac{\partial _{\alpha }\widetilde{Z}%
\left[ \theta \right] }{\left\vert \partial _{\alpha }\widetilde{Z}\left[
\theta \right] \right\vert }, \\
\widetilde{N}\left[ \theta \right] &=&i\frac{\partial _{\alpha }\widetilde{Z}%
\left[ \theta \right] }{\left\vert \partial _{\alpha }\widetilde{Z}\left[
\theta \right] \right\vert }.
\end{eqnarray*}%
We use the following form of the Birkhoff-Rott integral, defined for real-valued $%
\gamma $ and complex-valued $\omega$ that satisfies $\omega \left( \alpha
+2\pi \right) =\omega \left( \alpha \right) +M$:
\begin{equation}
\label{eq:birkhoffRottDef}
B\left[ \omega \right] \gamma \left( \alpha \right) :=\frac{1}{2iM}\func{PV}%
\int\nolimits_{0}^{2 \pi}\gamma \left( \alpha ^{\prime }\right) \cot \left( \frac{\pi }{M}\left(
\omega \left( \alpha \right) -\omega \left( \alpha ^{\prime }\right) \right)
\right) d\alpha ^{\prime }.
\end{equation}%
Then, $W^{\ast }=B\left[ z\right] \gamma $ by Mittag-Leffler's well-known cotangent series expansion (see, e.g., \cite{AblowitzFokasComplexVar}, Chapter 3). Finally, define the ``renormalized" version of (\ref{def:Omega}):
\begin{eqnarray*}
\widetilde{\Omega }\left( \theta ,\gamma ;c\right)
&:=&\partial _{\alpha }\left\{ \left( c\cos \theta -\func{Re}%
\left( \left(B\left[ \widetilde{Z}\left[ \theta \right] \right] \gamma\right)  
\widetilde{T}\left[ \theta \right] \right) \right) \gamma \right\} \\
&&-A\left( \frac{\pi \overline{\cos \theta }}{2M}\partial
_{\alpha }\left( \gamma ^{2}\right) +\frac{M^{2}}{2\pi ^{2}\left( \overline{%
\cos \theta }\right) ^{2}}\left( \sin \theta -\overline{\sin \theta }\right)
\right) \\
&&-A\left( \frac{M}{2\pi \overline{\cos \theta }}\partial
_{\alpha }\left\{ \left( c\cos \theta -\func{Re}\left( \left(B\left[ \widetilde{Z}%
\left[ \theta \right] \right] \gamma\right) \widetilde{T}%
\left[ \theta \right] \right) \right) ^{2}\right\} \right) .
\end{eqnarray*}

With these definitions at hand, we are able to re-write (\ref{eq:gammaEq1}), (\ref{eq:thetaEq1}), ensuring $M$-periodicity in a
traveling-wave  solution to these equations. The following proposition appears almost verbatim in \cite{AkersAmbroseSulonPeriodicHydroelasticWaves}, and an analogous version is proved in \cite{AmbroseStraussWrightGlobBifurc}:
\begin{proposition}
\label{spacial_periodic_version}Suppose $c\neq 0$ and $2\pi $-periodic
functions $\theta ,\gamma $ satisfy $\overline{\cos \theta }\neq 0$ and%
\begin{equation}
\partial _{\alpha }^{4}\theta +\Xi \left( \theta ;\tau_1,\sigma \right) -\frac{%
\sigma ^{2}}{S}\widetilde{\Omega }\left( \theta ,\gamma ;c\right) =0,
\label{eq:thetaEq2}
\end{equation}%
\begin{equation}
\func{Re}\left( \left(B\left[ \widetilde{Z}\left[ \theta \right] \right] \gamma\right)
 \widetilde{N}\left[ \theta \right] \right) +c\sin
\theta =0,  \label{eq:gammaEq2}
\end{equation}
with $\sigma =M \mathbin{/} \left( 2\pi \overline{\cos \theta }\right) $. Then, $%
\left( \widetilde{Z}\left[ \theta \right] \left( \alpha \right) +ct,\gamma
\left( \alpha \right) \right) $ is a traveling wave solution to (\ref{eq:gammaEq1}) and (\ref{eq:thetaEq1}) with speed $c$, and $%
\widetilde{Z}\left[ \theta \right] \left( \alpha \right) +ct$ is spatially
periodic with period $M$. 
\end{proposition}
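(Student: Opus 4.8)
The plan is to reduce the renormalized system \eqref{eq:thetaEq2}, \eqref{eq:gammaEq2} to the non-renormalized traveling-wave equations \eqref{eq:gammaEq1}, \eqref{eq:thetaEq1} by checking that, under the hypotheses, every renormalized geometric object coincides with its original counterpart. Write $\sigma=M/(2\pi\,\overline{\cos\theta})$ throughout, as in the statement. The curve $\widetilde{Z}[\theta]$ was built to satisfy the $M$-periodicity condition \eqref{periodicity_conditions_x}--\eqref{periodicity_conditions_y} for \emph{every} admissible $\theta$ — this is the direct calculation already indicated in Section~\ref{subsec:perConsid}, using $\int_0^{2\pi}\exp(i\theta)\,d\alpha'=2\pi\,\overline{\cos\theta}+2\pi i\,\overline{\sin\theta}$ — so the spatial profile $\widetilde{Z}[\theta](\cdot)+ct$ is automatically $M$-periodic and trivially obeys the ansatz \eqref{eq:travelingWaveAssumption}. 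What remains is to show that $\left(\widetilde{Z}[\theta](\cdot)+ct,\gamma\right)$ solves \eqref{eq:gammaEq1}, \eqref{eq:thetaEq1}; the one genuinely non-trivial input needed for this is the identity $\overline{\sin\theta}=0$.

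First I would establish $\overline{\sin\theta}=0$. Differentiating the definition of $\widetilde{Z}[\theta]$ yields $\partial_\alpha\widetilde{Z}[\theta]=\sigma\left(\cos\theta+i(\sin\theta-\overline{\sin\theta})\right)$, so $\widetilde{Z}[\theta]$ has tangent angle $\theta$ and constant arclength element $\widetilde{s}_\alpha:=\left|\partial_\alpha\widetilde{Z}[\theta]\right|\equiv\sigma$ \emph{exactly when} $\overline{\sin\theta}=0$. To obtain this, I would multiply the kinematic equation \eqref{eq:gammaEq2} by $\widetilde{s}_\alpha$ and integrate over one period: the first term becomes $\int_0^{2\pi}\func{Re}\left(\left(B[\widetilde{Z}[\theta]]\gamma\right)\widetilde{N}[\theta]\right)\widetilde{s}_\alpha\,d\alpha$, which is the net flux of the fluid velocity through one period of the closed interface (cf. \eqref{eq:normal}) and hence vanishes by incompressibility — equivalently it equals $\func{Re}\left(i\int_0^{2\pi}\left(B[\widetilde{Z}[\theta]]\gamma\right)\partial_\alpha\widetilde{Z}[\theta]\,d\alpha\right)$, and the integral here is real (its imaginary part is again the flux) since $B[\widetilde{Z}[\theta]]\gamma$ is the average of the limiting boundary values of functions holomorphic in the two fluid regions and $\gamma$ is real-valued. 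This leaves $c\int_0^{2\pi}\widetilde{s}_\alpha\sin\theta\,d\alpha=0$, whence $\overline{\sin\theta}=0$ because $c\neq 0$ and $\overline{\cos\theta}\neq 0$. (In the symmetric functional framework used for the bifurcation analysis, $\theta$ is odd and $\overline{\sin\theta}=0$ holds at once, so in practice this step is free.)

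Granting $\overline{\sin\theta}=0$, the rest is bookkeeping. Then $\widetilde{Z}[\theta](\alpha)=\sigma\int_0^\alpha\exp(i\theta(\alpha'))\,d\alpha'$ is precisely the curve \eqref{z_construction}; since the kernel in \eqref{eq:birkhoffRottDef} depends only on differences $\omega(\alpha)-\omega(\alpha')$, the translation $ct$ is immaterial and $B[\widetilde{Z}[\theta]]\gamma=W^\ast$ by the Mittag-Leffler identity noted above, while the tangent and normal of $\widetilde{Z}[\theta]$ are the $T$, $N$ attached to $\theta$ and $\widetilde{s}_\alpha\equiv\sigma$. Inserting $\overline{\sin\theta}=0$ and $\overline{\cos\theta}=M/(2\pi\sigma)$ into the definition of $\widetilde{\Omega}$ — so that $\pi\overline{\cos\theta}/(2M)=1/(4\sigma)$, $M^2/\left(2\pi^2(\overline{\cos\theta})^2\right)=2\sigma^2$, $M/(2\pi\overline{\cos\theta})=\sigma$, and $\sin\theta-\overline{\sin\theta}=\sin\theta$ — collapses $\widetilde{\Omega}(\theta,\gamma;c)$ to $\Omega(\theta,\gamma;c,\sigma)$ from \eqref{def:Omega}; hence \eqref{eq:thetaEq2} becomes \eqref{eq:gammaEq1}, and, using $\widetilde{N}[\theta]=N$, \eqref{eq:gammaEq2} becomes \eqref{eq:thetaEq1}. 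Thus $\left(\widetilde{Z}[\theta](\cdot)+ct,\gamma\right)$ solves \eqref{eq:gammaEq1}, \eqref{eq:thetaEq1}, is $M$-periodic in space, and satisfies \eqref{eq:travelingWaveAssumption} — the asserted conclusion. I expect the main obstacle to be the one analytic point, the vanishing of the integrated Birkhoff-Rott term and the resulting $\overline{\sin\theta}=0$; the remainder is the routine matching of the renormalized objects with the original ones.
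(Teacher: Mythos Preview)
Your overall strategy matches the paper's: the paper defers the proof to \cite{AmbroseStraussWrightGlobBifurc} and singles out, in the remark following the proposition, that the crux is the identity $\widetilde{\Omega}(\theta,\gamma;c)=\Omega(\theta,\gamma;c,\sigma)$, which is exactly what your ``bookkeeping'' step establishes once $\overline{\sin\theta}=0$. Your reduction of the renormalized objects to the original ones after that point is correct.

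There is, however, a gap in your derivation of $\overline{\sin\theta}=0$. Multiplying \eqref{eq:gammaEq2} by $\widetilde{s}_\alpha$ and integrating yields (granting the flux identity) $\int_0^{2\pi}\widetilde{s}_\alpha\sin\theta\,d\alpha=0$. But \emph{before} you know $\overline{\sin\theta}=0$, the weight
\[
\widetilde{s}_\alpha=\sigma\sqrt{\cos^2\theta+(\sin\theta-\overline{\sin\theta})^2}=\sigma\sqrt{1-2\,\overline{\sin\theta}\,\sin\theta+(\overline{\sin\theta})^2}
\]
is not constant in $\alpha$, so the vanishing of this \emph{weighted} integral does not by itself imply $\int_0^{2\pi}\sin\theta\,d\alpha=0$; your sentence ``whence $\overline{\sin\theta}=0$ because $c\neq 0$ and $\overline{\cos\theta}\neq 0$'' does not follow. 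The argument in \cite{AmbroseStraussWrightGlobBifurc} closes this with additional care. You are right to flag this as the single analytic obstacle, and also right that in the odd-$\theta$ setting used for the bifurcation analysis it is automatic, so for the purposes of this paper the point is moot.
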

\begin{remark}
Under the assumptions of Proposition \ref{spacial_periodic_version}, we have $\widetilde{\Omega }\left( \theta ,\gamma ;c\right) = \Omega \left( \theta ,\gamma ; c,\sigma \right)$ with $\sigma = M\mathbin{/}\left( 2\pi \overline{\cos \theta }\right) $. This result is the major component of the above proposition's proof \cite{AmbroseStraussWrightGlobBifurc}.
\end{remark}
Our equations require one last reformulation before an application of bifurcation theory techniques.

\subsection{Final reformulation}
\label{subsec:finalReform}

We would like to write (\ref{eq:thetaEq2}), (\ref{eq:gammaEq2}) in an ``identity plus compact" (over an appropriately chosen domain) form. In order to ``solve" (\ref{eq:thetaEq2}) for $\theta $, we introduce two operators.  Throughout, let $\nu$ be a general $2 \pi$-periodic map with convergent Fourier series, and write $\nu \left( \alpha \right) =\sum\nolimits_{k=-\infty }^{\infty }%
\widehat{\nu }\left( k\right) \exp \left( ik\alpha \right) $. First, define the projection
\begin{equation}
\label{eq:projDef}
P\nu (\alpha) :=\nu \left(\alpha \right)-\frac{1}{2\pi }%
\int\nolimits_{0}^{2\pi } \nu \left( \alpha^\prime \right) d\alpha^\prime;
\end{equation}
by definition, $P\nu$ has mean zero. Then, define an inverse derivative operator $\partial _{\alpha }^{-4}$ in the Fourier space:
\begin{equation}
\label{eq:inverseDerivDef}
\begin{cases}
\widehat{\partial _{\alpha }^{-4}\nu }\left( k\right) :=k^{-4} \widehat{\nu }\left( k\right), & k \neq 0 \\
             \widehat{\partial _{\alpha }^{-4}\nu }\left( 0\right) := 0
\end{cases}.
\end{equation}
Clearly, this operator $\partial _{\alpha }^{-4}$ preserves periodicity, and for sufficiently smooth, periodic $\nu$ with mean zero, 
\begin{equation*}
\partial _{\alpha }^{-4}\partial _{\alpha }^{4}\nu= \nu = \partial _{\alpha }^{4}\partial _{\alpha }^{-4}\nu.
\end{equation*}

Now, as in \cite{AkersAmbroseSulonPeriodicHydroelasticWaves}, we apply $\partial _{\alpha }^{-4} P$ to both sides of (\ref{eq:thetaEq2}), yielding
\begin{equation}
\label{eq:thetaEq3}
0=P\theta +\partial _{\alpha }^{-4}  \Xi \left( \theta ;\tau_1, \sigma \right)
-\frac{\sigma ^{2}}{S}\partial _{\alpha }^{-4}  \widetilde{\Omega }%
\left( \theta ,\gamma ;c\right) 
\end{equation}%
(note that $P\Xi =\Xi $ and $P \widetilde{\Omega } = \widetilde{\Omega }$).  For convenience, define
\begin{eqnarray*}
\gamma _{1}&:=& P \gamma, \\
\overline{\gamma } &:=& \gamma -  P \gamma;
\end{eqnarray*}%
we may then decompose $\gamma = \gamma_1 + \overline{\gamma }$, and the mean $\overline{\gamma }$ may be specified \emph{a priori} as another constant quantity in our problem \cite{AmbroseStraussWrightGlobBifurc}. We may then put
\begin{equation}
\label{def:Theta}
\Theta \left( \theta ,\gamma _{1};c,\tau_1\right) :=\frac{\sigma ^{2}}{S}%
\partial _{\alpha }^{-4} \widetilde{\Omega}\left( \theta ,\gamma _{1}+%
\overline{\gamma };c\right) -\partial _{\alpha }^{-4}  \Xi \left( \theta
;\tau_1,\sigma \right) ,
\end{equation}%
and concisely write (\ref{eq:thetaEq3}) as
\begin{equation}
\label{eq:thetaEq4}
P\theta -\Theta \left( \theta ,\gamma _{1};c, \tau_1\right) =0.
\end{equation}%

The Birkhoff-Rott integral (\ref{eq:birkhoffRottDef}) appears in both equations (\ref{eq:thetaEq4}) and (\ref{eq:gammaEq2}). Before our reformulation of (\ref{eq:gammaEq2}), we note that $B\left[\omega\right]\gamma$
can be written as the sum
\begin{equation}
\label{eq:BirkRottDecomp}
B\left[ \omega \right] \gamma = \frac{1}{2 i \omega_{\alpha}} H \gamma + \frac{1}{2 i}\left[H,\frac{1}{\omega_{\alpha}}\right]\gamma + \mathcal{K}\left[\omega\right] \gamma,
\end{equation}
where
\begin{equation} \label{eq:hilbertTransDef}
H \gamma (\alpha) := \frac{1}{2 \pi}\, \func{PV}\int \nolimits_0^{2 \pi} \gamma \left( \alpha^\prime \right) \cot \left(\frac{1}{2}\left( \alpha - \alpha^\prime \right) \right) d \alpha^\prime
\end{equation}
is the Hilbert transform for periodic functions, the operator
\begin{equation*}
\left[H,\psi\right]\gamma := H\left(\psi \gamma\right) - \psi H \gamma
\end{equation*}
is the commutator of the Hilbert transform and multiplication by a smooth function, and the remainder is
\begin{eqnarray*}
&&\mathcal{K} \left[ \omega \right] \gamma \left( \alpha \right) :=  \\
&&\frac{1}{2 i M}\func{PV}\int\nolimits_{0}^{2\pi }\gamma \left( \alpha
^{\prime }\right) \left[ \cot \left( \frac{\pi}{M}\left( \omega \left( \alpha
\right) -\omega \left( a^{\prime }\right) \right) \right) -\frac{M/\left(2 \pi\right) }{\omega_{\alpha ^{\prime }} \left( \alpha ^{\prime }\right) }\cot \left( \frac{1}{2}%
\left( \alpha -\alpha ^{\prime }\right) \right) \right] d\alpha ^{\prime }.
\end{eqnarray*}%
The most singular part of (\ref{eq:BirkRottDecomp}) is the term $\left(2 i \omega_{\alpha}\right)^{-1} H\gamma$, while the other two terms, which we denote
\begin{equation*}
\mathcal{K}^\prime \left[\omega\right] \gamma := \frac{1}{2 i}\left[H,\frac{1}{\omega_{\alpha}}\right]\gamma + \mathcal{K}\left[\omega\right]\gamma,
\end{equation*}
are sufficiently smooth on the domain we define in Section \ref{subsec:mappingProperties} 
\cite{AmbroseWellPosedness2003}, \cite{AmbroseZeroSurfaceTensionLimit2DDarcy}.

Then, noting that $H^2 \gamma = -P\gamma = -\gamma_1$, we may (as in \cite{AmbroseStraussWrightGlobBifurc}) write (\ref{eq:gammaEq2}) as
\begin{equation}
\label{eq:gammaEq3}
\gamma _{1}-H\left\{ 2\left\vert \partial _{\alpha }\widetilde{Z}\left[
\theta \right] \right\vert \func{Re}\left(\left(\mathcal{K}^\prime \left[ \widetilde{Z}\left[
\theta \right] \right] \left( \overline{\gamma }+\gamma _{1}\right) \right) 
\widetilde{N}\left[ \theta \right] \right) +2c\left\vert \partial _{\alpha }%
\widetilde{Z}\left[ \theta \right] \right\vert \sin \theta \right\}=0,
\end{equation}
and substitute $\theta = \Theta \left( \theta ,\gamma _{1};c, \tau_1\right)$ from (\ref{eq:thetaEq4}), finally yielding
\begin{equation}
\gamma _{1}-\Gamma \left( \theta ,\gamma _{1};c, \tau_1\right) =0,
\label{eq:gammaEq4}
\end{equation}%
where%
\begin{eqnarray}
\Gamma \left( \cdot \right) &:=& H\left\{2\left\vert \partial
_{\alpha }\widetilde{Z}\left[ \Theta \left( \cdot\right) %
\right] \right\vert \func{Re}\left( \left( \mathcal{K}^\prime \left[ \widetilde{Z}\left[
\Theta \left( \cdot\right) \right] \right] \left( \overline{%
\gamma }+\gamma _{1}\right) \right) \widetilde{N}\left[ \Theta \left( \cdot\right) \right] \right)\right.\label{def:Gamma} \\ 
&&+\left.2c\left\vert \partial _{\alpha }\widetilde{Z}\left[ \Theta \left( \cdot\right) \right] \right\vert \sin \left( \Theta \left( \cdot\right) \right) \right\}, \notag
\end{eqnarray}
and $\left(\cdot \right)$ indicates suppression of the arguments $\left( \theta ,\gamma _{1};c, \tau_1\right) $.

For concision, define the two-dimensional parameter $\mu :=\left(
c,\tau _{1}\right) $. Denote 
\begin{equation*}
F\left( \theta ,\gamma _{1};\mu \right)
:=\left( \theta -\Theta \left( \theta ,\gamma _{1};\mu \right) ,\gamma
_{1}-\Gamma \left( \theta ,\gamma _{1};\mu \right) \right),
\end{equation*}
 where $\Theta
,\Gamma $ are as defined earlier. In Section \ref{subsec:mappingProperties}, we define spaces over which the equation $F\left( \theta ,\gamma _{1};\mu \right) = 0$ is equivalent to (\ref{eq:thetaEq4}), (\ref{eq:gammaEq4}) and discuss the mapping properties of $F$.

\subsection{Mapping properties}\label{subsec:mappingProperties}
We introduce the following spaces, using the same notation and language as in \cite{AkersAmbroseSulonPeriodicHydroelasticWaves}:
\begin{definition}
Let $H_{\text{per}}^{s}$ denote $H_{\text{per}}^{s}\left[ 0,2\pi \right] ,$
i.e. the usual Sobelev space of $2\pi $-periodic functions from $%
%TCIMACRO{\U{211d} }%
%BeginExpansion
\mathbb{R}
%EndExpansion
$ to $%
%TCIMACRO{\U{2102} }%
%BeginExpansion
\mathbb{C}
%EndExpansion
$ with square-integrable derivatives up to order $s\in 
%TCIMACRO{\U{2115} }%
%BeginExpansion
\mathbb{N}
%EndExpansion
$. Let $H_{\text{per,odd}}^{s}$ denote the subset of $H_{\text{per}}^{s}$
comprised of odd functions; define $H_{\text{per,even}}^{s}$ similarly. \
Let $H_{\text{per,}0\text{,even}}^{s}$ denote the subset of $H_{\text{%
per,even}}^{s}$ comprised of mean-zero functions. Finally, letting $H_{%
\text{loc}}^{s}$ denote the usual Sobolev space of functions in $H^{s}\left(
I\right) $ for all bounded intervals $I$, we put%
\begin{equation*}
H_{M}^{s}=\left\{ \omega \in H_{\text{loc}}^{s}:\omega \left( \alpha \right)
-\frac{M\alpha }{2\pi }\in H_{\text{per}}^{s}\right\} .
\end{equation*}
For $b\geq 0$ and $s\geq 2$, define the ``chord-arc space"%
\begin{equation*}
C_{b}^{s}=\left\{ \omega \in H_{M}^{s}:\inf_{\alpha ,\alpha ^{\prime }\in 
\left[ 0,2 \pi \right] }\left\vert \frac{\omega \left( \alpha \right) -\omega
\left( \alpha ^{\prime }\right) }{\alpha ^{\prime }-\alpha }\right\vert
>b\right\} .
\end{equation*}%
\end{definition}
\begin{remark}\label{rem:RemainderAndCommutatorReg}
As proved in Lemma 3.5 of \cite{AmbroseWellPosedness2003}, the above chord-arc condition ensures that the ``remainder" mapping $\left(w,\gamma\right) \mapsto \mathcal{K} \left[ \omega \right] \gamma$ is a smooth map from $C_b^s \times H_{\text{per}}^1 \rightarrow H_{\text{per}}^{s-1}$, with
\begin{equation*}
\left\lVert\mathcal{K} \left[ \omega \right] \gamma \right\rVert_{H_{\text{per}}^{s-1}} \leq c_1 \left\lVert \gamma \right\rVert_{H_{\text{per}}^1} \exp \left\{c_2 \left\lVert \omega \right\rVert_{H^{s} \left(0, 2\pi\right)}\right\}.
\end{equation*}Furthermore, in the case $\omega \in H^3$ (i.e. $1/\omega_{\alpha} \in H^2$), Lemma 4 of \cite{AmbroseZeroSurfaceTensionLimit2DDarcy} provides the following estimate for the commutator term of (\ref{eq:BirkRottDecomp}):
\begin{equation*}
\left\lVert \left[H,\frac{1}{\omega_{\alpha}}\right] \gamma \right\rVert_{H_{\text{per}}^2} \leq c_1 \left\lVert \gamma \right\rVert_{H_{\text{per}}^1} \left\lVert \frac{1}{\omega_{\alpha}} \right\rVert_{H^2 \left(0, 2\pi\right)} .
\end{equation*}Explicitly, in the notation of this lemma of \cite{AmbroseZeroSurfaceTensionLimit2DDarcy}, we use $j=1$, $n=2$.  

These regularity results are stated for the case $M=2\pi$; however, this is not an essential assumption by any means.
\end{remark}
We can now define a domain over which $\left(\Theta,\Gamma \right)$ is compact. We state the following result, which is proved in \cite{AkersAmbroseSulonPeriodicHydroelasticWaves} and relies in part on the regularity results discussed in Remark \ref{rem:RemainderAndCommutatorReg} (note in particular that these results are applied for $\omega = \widetilde{Z}\left[ \theta \right]$, which is one derivative smoother than $\theta \in H^2$).
\begin{proposition}
\label{mapping_proposition}Define
\begin{equation*}
X:=H_{\text{per},\text{odd}}^{2}\times H_{\text{per},0,\text{even}}^{1} \times \mathbb{R} \times \left(0,\infty\right)
\end{equation*}
and 
\begin{equation*}
U_{b,h}:=\left\{ \left( \theta ,\gamma _{1};c, \tau_1\right) \in X
:\overline{\cos \theta }>h,\widetilde{Z}\left[ \theta \right] \in C_{b}^{2}%
\text{ and } \widetilde{Z}\left[ \Theta \left( \theta ,\gamma _{1};c,\tau_1\right) %
\right] \in C_{b}^{5}\right\},
\end{equation*}%
with the usual topology inherited from $X$.  The mapping $\left( \Theta ,\Gamma \right) $ from $U_{b,h}\subseteq X$ into 
$X$ is compact.
\end{proposition}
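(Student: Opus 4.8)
The plan is to show compactness of $(\Theta,\Gamma)$ by exhibiting it as a composition in which the ``gain of derivatives'' from the inverse operator $\partial_\alpha^{-4}$, together with the smoothing properties of the commutator and remainder pieces of the Birkhoff–Rott operator recorded in Remark \ref{rem:RemainderAndCommutatorReg}, produces a map landing in a strictly smoother space than $X$, followed by the compact Rellich embedding. Concretely, for $(\theta,\gamma_1;c,\tau_1)\in U_{b,h}$ I would first analyze $\Theta$: the quantity $\widetilde{\Omega}(\theta,\gamma_1+\overline\gamma;c)$ is built algebraically from $\theta$, $\gamma_1$, the renormalized curve $\widetilde Z[\theta]$ (one derivative smoother than $\theta$, hence in $H^3$), and $\func{Re}((B[\widetilde Z[\theta]]\gamma)\widetilde T[\theta])$. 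Using the decomposition \eqref{eq:BirkRottDecomp}, $B[\widetilde Z[\theta]]\gamma$ is a sum of $(2i(\widetilde Z[\theta])_\alpha)^{-1}H\gamma$, which lies in $H^1_{\text{per}}$ since $\gamma_1\in H^1$ and $(\widetilde Z[\theta])_\alpha^{-1}\in H^2$, plus the smoother commutator and remainder terms; so $\widetilde\Omega\in H^0_{\text{per}}$ at worst (being a derivative of an $H^1$-type expression). Similarly $\Xi(\theta;\tau_1,\sigma)$ involves at most $\theta_{\alpha\alpha\alpha}$ but only through the combinations displayed in \eqref{def:Xi}, which for $\theta\in H^2$ lie in a negative-index space controlled so that $\partial_\alpha^{-4}\Xi\in H^3_{\text{per}}$; applying $\partial_\alpha^{-4}$ to both terms gains four derivatives, placing $\Theta(\theta,\gamma_1;c,\tau_1)$ in $H^s_{\text{per,odd}}$ for some $s>2$ (I expect $s=3$, limited by the $\theta_\alpha^2\theta_{\alpha\alpha}$ term in $\Xi$). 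This is exactly the regularity already used in the definition of $U_{b,h}$, namely $\widetilde Z[\Theta(\cdots)]\in C_b^5$, which makes the subsequent composition well-defined.

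Next I would treat $\Gamma$. By \eqref{def:Gamma}, $\Gamma$ is $H$ applied to an expression built from $\widetilde Z[\Theta(\cdots)]$, its arclength element, $\func{Re}((\mathcal K'[\widetilde Z[\Theta(\cdots)]](\overline\gamma+\gamma_1))\widetilde N[\Theta(\cdots)])$, and $\sin(\Theta(\cdots))$. The key point is that $\mathcal K'=\frac1{2i}[H,1/\omega_\alpha]+\mathcal K[\omega]$ is smoothing: by Remark \ref{rem:RemainderAndCommutatorReg}, with $\omega=\widetilde Z[\Theta(\cdots)]\in C_b^5$ (so $\omega\in H^5$, $1/\omega_\alpha\in H^4$), the remainder $\mathcal K[\omega]\gamma$ lands in $H^4_{\text{per}}$ and the commutator gains regularity as well, so $\mathcal K'[\omega](\overline\gamma+\gamma_1)$ is considerably smoother than $\gamma_1\in H^1$ — in $H^2_{\text{per}}$ or better. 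Multiplying by the smooth factors $|\omega_\alpha|$ and $\widetilde N[\omega]$ (which are in $H^4$) and by $\sin(\Theta(\cdots))\in H^3$, then applying the bounded operator $H$, shows $\Gamma(\theta,\gamma_1;c,\tau_1)\in H^2_{\text{per},0,\text{even}}$, strictly smoother than the target component $H^1_{\text{per},0,\text{even}}$. (One must check evenness and the mean-zero property are preserved; these follow from the structure of $\widetilde Z$, $\Theta$, $H$, and the projection already built into the definition of $\gamma_1$.)

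Having shown $(\Theta,\Gamma)$ maps $U_{b,h}$ continuously into $H^s_{\text{per,odd}}\times H^2_{\text{per},0,\text{even}}$ with $s>2$, compactness follows because the inclusions $H^s_{\text{per,odd}}\hookrightarrow H^2_{\text{per,odd}}$ and $H^2_{\text{per},0,\text{even}}\hookrightarrow H^1_{\text{per},0,\text{even}}$ are compact by the Rellich–Kondrachov theorem, and a bounded-into-a-strictly-smoother-space map composed with a compact embedding is compact. I would also note that continuity/boundedness of each constituent operator on $U_{b,h}$ — in particular the uniform control of the chord-arc constant $b$ and the lower bound $h$ on $\overline{\cos\theta}$ on this set — is what keeps all the exponential estimates in Remark \ref{rem:RemainderAndCommutatorReg} finite. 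The main obstacle is the careful bookkeeping of exactly how many derivatives are gained and lost in $\Theta$: one must verify that the nonlinear terms $\theta_\alpha^2\theta_{\alpha\alpha}$ in $\Xi$ and the derivative falling on the $H^1$-regular Birkhoff–Rott expression in $\widetilde\Omega$ still leave a net gain after $\partial_\alpha^{-4}$, and that the output regularity of $\Theta$ is enough to legitimately invoke the $C_b^5$ estimates of Remark \ref{rem:RemainderAndCommutatorReg} in the analysis of $\Gamma$ — i.e. that the two halves of the argument are compatible. Since the analogous proposition is already established in \cite{AkersAmbroseSulonPeriodicHydroelasticWaves} and \cite{AmbroseStraussWrightGlobBifurc}, I would follow that derivative-counting essentially verbatim, flagging only the places where the hydroelastic bending term $\partial_\alpha^4\theta$ (and hence the choice of $\partial_\alpha^{-4}$ rather than $\partial_\alpha^{-2}$) changes the arithmetic.
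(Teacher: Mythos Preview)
Your proposal is correct and follows essentially the same approach as the paper, which does not actually prove this proposition but simply cites \cite{AkersAmbroseSulonPeriodicHydroelasticWaves} (and the regularity estimates of Remark~\ref{rem:RemainderAndCommutatorReg}) for the argument. Your derivative-counting sketch---gain four derivatives via $\partial_\alpha^{-4}$ so that $\Theta$ lands in $H^s_{\text{per,odd}}$ with $s>2$, use the smoothing of $\mathcal{K}'$ on the $C_b^5$ curve $\widetilde Z[\Theta]$ to place $\Gamma$ in $H^2_{\text{per},0,\text{even}}$, then invoke Rellich--Kondrachov---is precisely the strategy of that reference (in fact the bookkeeping gives $\Theta\in H^4$, one degree better than your cautious $s=3$, which is why the $C_b^5$ condition in the definition of $U_{b,h}$ is natural).
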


Thus, the mapping $F$, defined over $X$, is of the form ``identity plus compact," and the equation
\begin{equation}
\label{eq:FequalsZero}
F\left( \theta ,\gamma _{1};\mu \right) = 0
\end{equation}
is equivalent to (\ref{eq:thetaEq4}) (note that since $\theta$ is odd, $P\theta = \theta$) and (\ref{eq:gammaEq4}). In the next section, we calculate the linearization $L(\mu)$ of $F$; the status of $L(\mu)$ as a Fredholm operator will allow us to apply the Lyapunov--Schmidt procedure to our main equation (\ref{eq:FequalsZero}) and subsequently prove a bifurcation theorem with methods akin to \cite{baldiToland1}, \cite{EhrnstromEscherWahlen}.

\section{Bifurcation theorem}\label{sec:bifurcThm}

\subsection{Linearization and its kernel}
\label{subsec:linearization}

Our bifurcation analysis begins with a calculation of the linearization $L(\mu)$ of $F$ at $\left( 0,0;\mu \right) $.  This is shown with
greater detail in \cite{AkersAmbroseSulonPeriodicHydroelasticWaves}; we simply state the results here. Letting $%
\left( \overrightarrow{\theta },\overrightarrow{\gamma }\right) $ denote the
direction of differentiation (i.e. for any sufficiently regular map $\nu $, $%
D\nu :=\left. D_{\theta ,\gamma _{1}}\nu \left( \theta ,\gamma _{1};\mu\right)
\right\vert _{\left( 0,0;\mu\right) }:=\lim_{\varepsilon \rightarrow 0}\frac{1%
}{\varepsilon }\left( \nu \left( \varepsilon \overrightarrow{\theta }%
,\varepsilon \overrightarrow{\gamma };\mu\right) -\nu \left(0,0;\mu\right)\right) $), we
define:%
\begin{equation*}
L(\mu)%
\begin{bmatrix}
\overrightarrow{\theta } \\ 
\overrightarrow{\gamma }%
\end{bmatrix}%
:=%
\begin{bmatrix}
\overrightarrow{\theta }-D\Theta \\ 
\overrightarrow{\gamma }-D\Gamma%
\end{bmatrix}%
\end{equation*}%
where%
\begin{equation*}
L(\mu)
\begin{bmatrix}
\overrightarrow{\theta } \\ 
\overrightarrow{\gamma }%
\end{bmatrix}%
:=
\begin{bmatrix}
L_{11} & L_{12} \\ 
L_{21} & L_{22}%
\end{bmatrix}%
\begin{bmatrix}
\overrightarrow{\theta } \\ 
\overrightarrow{\gamma }%
\end{bmatrix}%
\end{equation*}%
with%
\begin{eqnarray*}
L_{11} &:=&1-\frac{\overline{\gamma }M}{4\pi S}\left( 
\overline{\gamma }-\frac{cAM}{\pi }\right) \partial
_{\alpha }^{-4}\partial _{\alpha }H+\frac{AM^{4}}{8\pi ^{4}S}\partial
_{\alpha }^{-4}P-\frac{\tau _{1}M^{2}}{4\pi ^{2}}\partial _{\alpha
}^{-4}\partial _{\alpha }^{2}, \\
L_{12} &:=&\frac{M^{2}}{4\pi ^{2}S}\left( \frac{\pi A\overline{%
\gamma }}{M}-c\right) \partial _{\alpha
}^{-4}\partial _{\alpha }, \\
L_{21} &:=&-\frac{c\overline{\gamma }M^{2}}{4\pi ^{2}S}\left( \overline{\gamma} -\frac{cAM}{\pi }\right) H\partial
_{\alpha }^{-4}\partial _{\alpha }H+\frac{cAM^{5}}{8\pi ^{5}S}H\partial
_{\alpha }^{-4}P-\frac{c\tau _{1}M^{3}}{4\pi ^{3}}H\partial _{\alpha
}^{-4}\partial _{\alpha }^{2}, \\
L_{22} &:=&1+\frac{c M^{2}}{4\pi ^{2}S}\left( A\overline{%
\gamma }-\frac{cM}{\pi }\right) H\partial _{\alpha
}^{-4}\partial _{\alpha }.
\end{eqnarray*}

In \cite{AkersAmbroseSulonPeriodicHydroelasticWaves}, we then proceed to calculate the Fourier coefficients of $%
L(\mu)$, which we also simply list below. Using the same notation as in Section \ref{subsec:finalReform}, we calculate
\begin{eqnarray*}
\widehat{L_{11}}\left( k\right) &=&1-\frac{\overline{\gamma }M}{%
4\pi S}\left( \overline{\gamma }-\frac{cAM}{\pi }%
\right) \frac{1}{\left\vert k\right\vert ^{3}}+\frac{AM^{4}}{8\pi ^{4}S}%
\frac{1}{k^{4}}+\frac{\tau _{1}M^{2}}{4\pi ^{2}}\frac{1}{k^{2}}, \\
\widehat{L_{12}}\left( k\right) &=&i\frac{M^{2}}{4\pi ^{2}S}\left( 
\frac{\pi A\overline{\gamma }}{M}-c\right) \frac{1%
}{k^{3}}, \\
\widehat{L_{21}}\left( k\right) &=&i\frac{c\overline{\gamma }M^{2}}{%
4\pi ^{2}S}\left( \overline{\gamma }-\frac{cAM}{\pi }\right) \frac{1}{k^{3}}-i\frac{cAM^{5}}{8\pi ^{5}S}\frac{\text{sgn}%
\left( k\right) }{k^{4}}-i\frac{c\tau _{1}M^{3}}{4\pi ^{3}}\frac{\text{sgn}%
\left( k\right) }{k^{2}}, \\
\widehat{L_{22}}\left( k\right) &=&1+\frac{cM^{2}}{4\pi ^{2}S}%
\left( A\overline{\gamma }-\frac{cM}{\pi}\right) 
\frac{1}{\left\vert k\right\vert ^{3}};
\end{eqnarray*}%
and note%
\begin{equation*}
\widehat{L(\mu)%
\begin{bmatrix}
\overrightarrow{\theta } \\ 
\overrightarrow{\gamma }%
\end{bmatrix}%
}\left( k\right)=%
\begin{bmatrix}
\widehat{L_{11}}\left( k\right) & \widehat{L_{12}}\left( k\right) \\ 
\widehat{L_{21}}\left( k\right) & \widehat{L_{22}}\left( k\right)%
\end{bmatrix}%
\begin{bmatrix}
\overrightarrow{\theta } \\ 
\overrightarrow{\gamma }%
\end{bmatrix}%
.
\end{equation*}

We summarize the spectral information of $L(\mu)$ in the following
proposition, which appears nearly verbatim (albeit with proof) in \cite{AkersAmbroseSulonPeriodicHydroelasticWaves}. 

\begin{proposition}
\label{spectral_proposition}Let $L(\mu)$ be the linearization of $F$ at $\left( 0,0;\mu \right)$. The spectrum of $L(\mu)$
is the set of eigenvalues $\left\{ 1\right\} \cup \left\{ \lambda _{k}\left(
\mu \right) :k\in 
%TCIMACRO{\U{2115} }%
%BeginExpansion
\mathbb{N}
%EndExpansion
\right\} $, where%
\begin{equation}
\lambda _{k}\left( \mu \right) :=1+\frac{M^{2}\tau _{1}}{4\pi ^{2}}k^{-2}+\frac{%
-c^{2}M^{3}+2Ac\overline{\gamma }M^{2}\pi -\overline{\gamma }^{2}M\pi ^{2}}{%
4\pi ^{3}S}k^{-3}+\frac{AM^{4}}{8\pi ^{4}S}k^{-4}.  \label{lambda_c_def}
\end{equation}%
Each eigenvalue $\lambda $ of $L(\mu)$ has algebraic multiplicity equal to
its geometric multiplicity, which we denote 
\begin{equation*}
N_{\lambda }\left( \mu \right) :=\left\vert \left\{ k\in 
%TCIMACRO{\U{2115} }%
%BeginExpansion
\mathbb{N}
%EndExpansion
:\lambda _{k}\left(\mu \right) =\lambda \right\} \right\vert ,
\end{equation*}%
and the corresponding eigenspace is%
\begin{equation*}
E_{\lambda }\left( \mu \right) :=\textup{span}\left\{ 
\begin{bmatrix}
-\frac{\pi }{cM}\sin \left( k\alpha \right) \\ 
\cos \left( k\alpha \right)%
\end{bmatrix}%
:k\in 
%TCIMACRO{\U{2115} }%
%BeginExpansion
\mathbb{N}
%EndExpansion
\text{ such that }\lambda _{k}\left( \mu \right) =\lambda \right\} .
\end{equation*}%

Further, define the polynomial
\begin{equation}
R\left( k ; \tau_1\right) :=AM^{4}+2\left(\left(A^{2}-1\right)%
\overline{\gamma }^{2}M\pi ^{3}\right) k+2M^{2}\pi ^{2}S\tau _{1}k^{2}+8\pi
^{4}Sk^{4}.
\label{eq:RkDef}
\end{equation}%
For fixed $k$, if the inequality 
\begin{equation}
R\left( k ; \tau_1\right) \geq 0
\label{nonstrict_ineq_condition}
\end{equation}%
holds, then the values of $c\in 
%TCIMACRO{\U{211d} }%
%BeginExpansion
\mathbb{R}
%EndExpansion
$ for which $\lambda _{k}\left(c,\tau_1 \right) =0$ are 
\begin{equation}
c_{\pm }\left( k ; \tau_1 \right) :=\frac{A\overline{\gamma}\pi }{M}\pm \sqrt{\frac{R\left( k ; \tau_1\right)}{2kM^{3}\pi }},  \label{c_pm}
\end{equation}%
and this zero eigenvalue has multiplicity $N_{0}\left( c_{\pm }\left(
k; \tau_1 \right),\tau_1 \right) \leq 2$. Specifically, if we define the polynomial
\begin{equation}
p\left( l,k;\tau_1\right) :=-AM^{4}+2kl\pi ^{2}S\left( 4\left(
k^{2}+kl+l^{2}\right) \pi ^{2}+M^{2}\tau _{1}\right) ,
\label{eq:plkDef}
\end{equation}%
$p\left( \cdot,k;\tau_1 \right) $ has a single real root (denoted $l\left( k\right) 
$), and we have $N_{0}\left( c_{\pm }\left(
k; \tau_1 \right),\tau_1 \right) =2$ if and only
if $l\left( k\right) $ is a positive integer not equal to $k$.
\end{proposition}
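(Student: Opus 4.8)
The plan is to analyze the eigenvalue formula \eqref{lambda_c_def} directly, treating $k$ and $\tau_1$ as fixed and viewing $\lambda_k(\mu)=0$ as an equation in $c$. First I would multiply $\lambda_k(c,\tau_1)=0$ by the positive quantity $8\pi^4 S k^4 / \ldots$ — more precisely, clear denominators by multiplying through by $8\pi^3 S k^4$ — to convert the vanishing condition into a quadratic polynomial in $c$. Inspecting the coefficients, the $c^2$ term comes with coefficient $-c^2 M^3 \cdot 2 k$ (after scaling), the linear term from $2Ac\overline{\gamma}M^2\pi \cdot 2k$, and the constant collects the $1$, the $\tau_1 k^{-2}$, the $\overline{\gamma}^2$, and the $AM^4 k^{-4}$ contributions; assembling these and completing the square yields exactly $c = \frac{A\overline{\gamma}\pi}{M} \pm \sqrt{R(k;\tau_1)/(2kM^3\pi)}$, where $R(k;\tau_1)$ as defined in \eqref{eq:RkDef} is precisely the discriminant-type expression that must be nonnegative for real roots to exist. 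This establishes \eqref{c_pm} and the equivalence of real solutions with \eqref{nonstrict_ineq_condition}.

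For the multiplicity claim, the key observation from Proposition \ref{spectral_proposition} is that $N_0(c_\pm(k;\tau_1),\tau_1)$ counts the integers $l\in\mathbb{N}$ with $\lambda_l(c_\pm,\tau_1)=0$; since $k$ is always such an integer by construction, the multiplicity is at most $2$, and equals $2$ precisely when there is a second positive integer $l\neq k$ with $\lambda_l(c_\pm,\tau_1)=0$. To characterize this, I would subtract: $\lambda_k(c,\tau_1)-\lambda_l(c,\tau_1)=0$ with both individually zero. The difference $\lambda_k - \lambda_l$ is a linear combination of $k^{-2}-l^{-2}$, $k^{-3}-l^{-3}$, and $k^{-4}-l^{-4}$ with coefficients not involving $l$ or $k$ beyond these powers; clearing denominators (multiplying by $k^4 l^4$ and dividing out the common factor $k-l$, which is legitimate since $l\neq k$) produces, after simplification, the polynomial $p(l,k;\tau_1)$ of \eqref{eq:plkDef}. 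One then checks that the $c$-dependence drops out of this reduced equation — that is, the condition "$\lambda_l=0$ given $\lambda_k=0$ at $c=c_\pm$" is equivalent to $p(l,k;\tau_1)=0$ — using that $c_\pm$ already satisfies the quadratic; this is the crux of why the multiplicity condition is stated purely in terms of $p$.

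Next I would verify that $p(\cdot,k;\tau_1)$, regarded as a polynomial in $l$, has exactly one real root. Since $p(l,k;\tau_1) = -AM^4 + 2kl\pi^2 S(4(k^2+kl+l^2)\pi^2 + M^2\tau_1)$, expanding in $l$ gives a cubic with leading term $8k\pi^4 S l^3$ (positive, as $k\in\mathbb{N}$, $S>0$, $\pi>0$), a vanishing $l^2$-coefficient is \emph{not} the case — rather the $l^2$ coefficient is $8k^2\pi^4 S>0$ — so I would instead argue monotonicity: compute $\partial_l p = 2k\pi^2 S(4\pi^2(k^2+2kl+3l^2) + M^2\tau_1) = 2k\pi^2 S(4\pi^2(k+l)^2 + 8\pi^2 l^2 - \text{(cross terms)} + M^2\tau_1)$; a cleaner route is to note the bracketed quadratic in $l$, namely $4(k^2+kl+l^2)\pi^2 + M^2\tau_1$, is always strictly positive (its discriminant in $l$ is $-12k^2\pi^2 <0$ hence no real zeros, and it is positive for $l=0$), so the map $l\mapsto l\cdot\bigl(4(k^2+kl+l^2)\pi^2+M^2\tau_1\bigr)$ has strictly positive derivative for all real $l$ when combined with its own positive quadratic factor — verifying $\partial_l p>0$ everywhere — whence $p$ is strictly increasing and has a unique real root $l(k)$. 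Finally, combining: $N_0(c_\pm(k;\tau_1),\tau_1)=2$ iff there exists a positive integer $l\neq k$ with $p(l,k;\tau_1)=0$, which by uniqueness of the real root happens iff $l(k)$ itself is a positive integer different from $k$.

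The main obstacle is the middle step: showing that the auxiliary equation governing a coincident second mode reduces \emph{exactly} to $p(l,k;\tau_1)=0$ with no residual dependence on $c$ or on the sign choice $\pm$. This requires carefully substituting the explicit value of $c_\pm^2$ (and $Ac_\pm\overline{\gamma}$) obtained from the quadratic into the expression $\lambda_l(c_\pm,\tau_1)$ and watching a good deal of cancellation occur; the algebra is routine but delicate, and one must confirm that the $(A^2-1)\overline{\gamma}^2$ combination appearing in $R$ is consistent with the $AM^4$-and-$\tau_1$ combination appearing in $p$. Once that identity is in hand, the rest (root count of the cubic, translating to the integer condition) is elementary.
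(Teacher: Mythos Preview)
The paper does not actually prove this proposition: immediately before stating it, the authors write that it ``appears nearly verbatim (albeit with proof) in \cite{AkersAmbroseSulonPeriodicHydroelasticWaves},'' and no argument is given here. So there is no in-paper proof to compare against; I can only assess whether your sketch is sound.

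Your approach is correct and is essentially the natural one. A few remarks:

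\textbf{On the quadratic in $c$.} Clearing denominators in $\lambda_k(c,\tau_1)=0$ indeed gives a quadratic in $c$ whose discriminant is $R(k;\tau_1)/(2kM^3\pi)$ up to normalization, yielding \eqref{c_pm} under \eqref{nonstrict_ineq_condition}.

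\textbf{On eliminating $c$.} Your concern in the last paragraph is slightly misplaced. You do not need the explicit value of $c_\pm$ or anything about $R$ to derive $p$. The cleanest route is to observe that $\lambda_j(\mu)$ depends on $c$ only through the single scalar $B:=\dfrac{-c^{2}M^{3}+2Ac\overline{\gamma}M^{2}\pi-\overline{\gamma}^{2}M\pi^{2}}{4\pi^{3}S}$. The equation $\lambda_k=0$ pins down $B$ as a function of $k,\tau_1$ alone (independent of which root $c_\pm$ you are at). Substituting that value of $B$ into $\lambda_l=0$, multiplying by $8\pi^4 S k l^4$, and factoring out $(l-k)$ gives exactly $p(l,k;\tau_1)=0$. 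No ``delicate cancellation'' involving $(A^2-1)\overline{\gamma}^2$ is needed at this stage; that combination lives only in $R$.

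\textbf{On the unique real root.} Your derivative computation is right: $\partial_l p = 2k\pi^2 S\bigl(4\pi^2(k^2+2kl+3l^2)+M^2\tau_1\bigr)$, and $k^2+2kl+3l^2=(k+l)^2+2l^2>0$ for $k\in\mathbb{N}$, so $p(\cdot,k;\tau_1)$ is strictly increasing with a unique real root $l(k)$. Your first attempt via the discriminant of $k^2+kl+l^2$ is muddled and should be dropped in favor of this direct calculation.

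\textbf{On multiplicity $\leq 2$.} This follows immediately once $p(\cdot,k;\tau_1)$ has a single real root: if $\lambda_k=\lambda_l=\lambda_m=0$ at the same $c$ with $k,l,m$ distinct, then $p(l,k)=p(m,k)=0$, forcing $l=m$.

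In short: your plan works, and the only refinement is to streamline the $c$-elimination by solving $\lambda_k=0$ for the combined coefficient $B$ rather than for $c$ itself.
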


\begin{remark}
In \cite{AkersAmbroseSulonPeriodicHydroelasticWaves}, we consider wave speeds $c_{\pm }\left( k; \tau_1\right) $ for which 
\begin{equation*}
N_{0}\left( c_{\pm }\left(k; \tau_1 \right),\tau_1 \right) =1.
\end{equation*}
This condition is necessary for the linearization $L(\mu)$ to have an \textquotedblleft odd
crossing number\textquotedblright\ at $c=c_{\pm }\left( k; \tau_1\right)$ (see
\cite{AkersAmbroseSulonPeriodicHydroelasticWaves}, Proposition 13); subsequently, the linearization's possession of an
odd crossing number is a hypothesis for the abstract bifurcation theorem we
apply (see \cite{AkersAmbroseSulonPeriodicHydroelasticWaves}, Theorem 8, which itself is a slight modification of
that which appears in \cite{KielhoferBifurcationBook}). However, neither \cite{AkersAmbroseSulonPeriodicHydroelasticWaves} nor
the relevant theorem in \cite{KielhoferBifurcationBook} draw conclusions of bifurcation in
the $N_{0}\left( c_{\pm }\left(k; \tau_1 \right),\tau_1 \right) =2$ case, since the
crossing number cannot be odd in this case. Yet, bifurcation may still
occur when $N_{0}\left( c_{\pm }\left(
k; \tau_1 \right),\tau_1 \right) =2$; it is this
case that comprises the main focus of our analysis here.
\end{remark}

We are now ready to state our main theorem, which (like the analogous results in \cite{baldiToland1}, \cite{EhrnstromEscherWahlen}) provides conditions in
which bifurcation can be concluded in the $N_{0}\left( c_{\pm }\left(
k; \tau_1 \right),\tau_1 \right) =2$ case. 

\subsection{Main theorem}

Recall the definition of the space $X$ from Proposition \ref{mapping_proposition}.

\begin{theorem}\label{thm:main}Define the polynomials $p$ and $R$ as in (\ref{eq:plkDef}) and (\ref{eq:RkDef}).  Let $b>0$ and $h>0$ be arbitrary, and  suppose that $k < l$ are a pair of positive integers that satisfy the following criteria for some $\tau_1^*>0$: \\
\begin{enumerate}[label={\upshape(\roman*)}]
\item $p(l,k;\tau_1^*)=0$ (2D null space condition), and \\
\item $R(k;\tau_1^*) > 0$ (non-degeneracy condition).\\
\end{enumerate}
Given $\mu^* := \left(c_{\pm }\left(
k ; \tau_1^* \right),\tau_1^*  \right)$, let
\begin{eqnarray*}
\mathcal{V}:= \func{Ker} L(\mu^*),&\;&  \mathcal{R} := \func{Range} L(\mu^*), \\
\mathcal{V^\dagger}:= \func{Ker} L^{\dagger} (\mu^*),&\;&  \mathcal{R^\dagger}:= \func{Range} L^{\dagger}(\mu^*),
\end{eqnarray*}where $L^{\dagger}(\mu^*)$ is the Hermitian adjoint of $L(\mu^*)$.  Further, let $\Pi_\mathcal{V^\dagger}$, $\Pi_\mathcal{R}$ denote the corresponding projections.  Finally, denote basis elements of $\mathcal{V}$ by
\begin{equation*}
v_j \left(\alpha\right) := \begin{bmatrix}
-\frac{\pi }{c_{\pm }\left(
k ; \tau_1^* \right) M}\sin \left( j\alpha \right) \\ 
\cos \left( j\alpha \right)%
\end{bmatrix}, \;\;\; j \in \left\{k,l\right\}.
\end{equation*}

Depending on whether $l \mathbin{/} k \in \mathbb{N}$, one of the following alternatives hold.

\begin{description}
\item[Non-resonant case] Suppose $l \mathbin{/} k \notin \mathbb{N}$.  Then, there exist neighborhoods $\mathcal{N}_{t},\,\mathcal{N}_{\mu}\subseteq\mathbb{R}^2$ of (respectively) $(0,0)$ and $\mu^*$, neighborhoods $\mathcal{N_V},\,\mathcal{N_{R^{\dagger}}}$ of $0$ in $\mathcal{V}$ and $\mathcal{R^\dagger}\cap U_{b,h}$ (respectively) along with smooth functions $\overline{\mu}:\mathcal{N}_{t} \rightarrow \mathcal{N}_{\mu}$, $\overline{y}:\mathcal{N_V} \times \mathcal{N}_{\mu} \rightarrow \mathcal{N_{R^\dagger}}$, and
\begin{equation*}
w(t_1,t_2):= t_1 v_k + t_2 v_l +\overline{y} \left( t_1 v_k + t_2 v_l, \overline{\mu}(t_1,t_2) \right),
\end{equation*}with
\begin{equation*}
\overline{\mu}(0,0) = \mu^*,
\end{equation*}
\begin{equation*}
\overline{y} \left( t_1 v_k + t_2 v_l, \overline{\mu}(t_1,t_2) \right) = O(t_1^2 + t_2^2),
\end{equation*}
such that
\begin{equation*}
F\left(w(t_1,t_2);\overline{\mu}(t_1,t_2)\right)=0
\end{equation*}
for all $(t_1,t_2) \in \mathcal{N}_{t}$.  

The maps $\overline{\mu}, \, \overline{y}$ are unique in the sense that if $\Pi_\mathcal{V^{\dagger}} F\left(w\left(t_1,t_2\right); \mu \right) = 0$ for $\left(t_1,t_2\right) \in \mathcal{N}_t$ with $t_1 \neq 0$ and $t_2 \neq 0$, then $\mu = \overline{\mu}\left(t_1,t_2\right)$, and likewise if $\Pi_\mathcal{R}F\left( v+y,\mu \right) =0$ with $\left( v,\mu \right) \in \mathcal{N_V} \times \mathcal{N}_{\mu}$ and $y\in \mathcal{N_{R^\dagger}}$, then $y=\overline{y}\left( v;\mu \right)$.

\item[Resonant case]  Suppose $l \mathbin{/} k \in \mathbb{N}$.  Given $\delta > 0$, there exist neighborhoods $\mathcal{N}_{r} \subseteq \mathbb{R}^{+},\, \mathcal{N}_{\mu}\subseteq\mathbb{R}^2$ of (respectively) $0$ and $\mu^*$, neighborhoods $\mathcal{N_V},\,\mathcal{N_{R^{\dagger}}}$ of $0$ in $\mathcal{V}$ and $\mathcal{R^\dagger}\cap U_{b,h}$ (respectively) along with smooth functions $\overline{\mu }:\mathcal{N}_{r}\times \left( \left(\delta ,\pi -\delta \right) \cup \left( -\pi +\delta, -\delta \right)\right) \rightarrow N_{\mu }$, $\overline{y}:\mathcal{N_V} \times \mathcal{N}_{\mu} \rightarrow \mathcal{N_{R^\dagger}}$, and
\begin{eqnarray*}
w(r,\beta)&:=& \left(r \cos \beta\right) v_k + \left(r \sin \beta\right) v_l \\
&&+\overline{y} \left( \left(r \cos \beta\right) v_k + \left(r \sin \beta\right) v_l, \overline{\mu}(r,\beta) \right),
\end{eqnarray*}with
\begin{equation*}
\overline{\mu}(0,\beta) = \mu^*,
\end{equation*}
\begin{equation*}
\overline{y} \left( \left(r \cos \beta\right) v_k + \left(r \sin \beta\right) v_l, \overline{\mu}(r,\beta) \right) = O(r^2),
\end{equation*}
such that
\begin{equation*}
F\left(w(r,\beta);\overline{\mu}(r,\beta)\right)=0
\end{equation*}
for all $r \in \mathcal{N}_{r}$ and for all $\beta$ satisfying $\delta <\left\vert
\beta \right\vert <\pi -\delta $.

The map $\overline{\mu}$ is unique in the sense that if $\Pi_\mathcal{V^{\dagger}} F\left(w\left(r \cos \beta,r \sin \beta \right); \mu \right) = 0$ for nontrivial $r \in \mathcal{N}_r$ and $\beta \in \left(\delta ,\pi -\delta \right) \cup \left(-\pi +\delta, -\delta \right) \setminus \left\{\pi/2,-\pi/2\right\}$, then $\mu = \overline{\mu}\left(r,\beta \right)$.  The map $\overline{y}$ is unique in the same sense as in the non-resonant case.

\end{description}

\end{theorem}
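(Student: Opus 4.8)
The plan is a Lyapunov--Schmidt reduction followed by an implicit function theorem argument, organized around the fact that $L(\mu)$ acts diagonally on Fourier frequencies.  By Proposition \ref{mapping_proposition} the map $\left(\Theta,\Gamma\right)$ is compact on $U_{b,h}$, and (as established in \cite{AkersAmbroseSulonPeriodicHydroelasticWaves}) it is smooth there, so $L(\mu^{*})$ is Fredholm of index zero; writing $Y:=H_{\text{per},\text{odd}}^{2}\times H_{\text{per},0,\text{even}}^{1}$ we obtain orthogonal splittings $Y=\mathcal{V}\oplus\mathcal{R}^{\dagger}$ and $Y=\mathcal{R}\oplus\mathcal{V}^{\dagger}$, with $\dim\mathcal V=\dim\mathcal V^{\dagger}=2$ exactly because hypothesis (i), $p(l,k;\tau_1^{*})=0$, forces $N_0(\mu^*)=2$ in Proposition \ref{spectral_proposition} (which also gives $N_0(\mu^*)\le 2$, so the kernel is precisely $\mathrm{span}\{v_k,v_l\}$).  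Decomposing a small element of $Y$ as $v+y$ with $v\in\mathcal V$, $y\in\mathcal R^{\dagger}$, the equation $F(v+y;\mu)=0$ is equivalent to the pair $\Pi_{\mathcal R}F(v+y;\mu)=0$ (auxiliary) and $\Pi_{\mathcal V^{\dagger}}F(v+y;\mu)=0$ (bifurcation).  Since $D_{y}\bigl[\Pi_{\mathcal R}F\bigr](0,0;\mu^{*})=\Pi_{\mathcal R}L(\mu^{*})\big|_{\mathcal R^{\dagger}}\colon\mathcal R^{\dagger}\to\mathcal R$ is an isomorphism, the implicit function theorem produces a smooth $\overline y=\overline y(v;\mu)$ near $(0,\mu^{*})$ solving the auxiliary equation, with $\overline y(0;\mu)=0$ and $D_{v}\overline y(0;\mu^{*})=0$ (because $L(\mu^{*})v=0$ on $\mathcal V$), hence $\overline y(v;\mu)=O(\|v\|^{2})$ once $\mu$ is near $\mu^{*}$; this, together with the uniqueness clause of that theorem, gives all assertions about $\overline y$.

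Next I study $\Phi(v;\mu):=\Pi_{\mathcal V^{\dagger}}F\bigl(v+\overline y(v;\mu);\mu\bigr)$ on a neighborhood of $0$ in $\mathcal V\times\mathbb R^{2}$, writing $v=t_{1}v_{k}+t_{2}v_{l}$ and $\Phi=(\Phi^{(k)},\Phi^{(l)})$ along the mode-$k$ and mode-$l$ parts of $\mathcal V^{\dagger}$.  Three facts drive the proof.  (a) $\Phi(0;\mu)\equiv0$, since the flat interface solves $F(0,0;\mu)=0$ for every $\mu$.  (b) Because $L(\mu)$ preserves Fourier frequency, $D_{v}\Phi(0;\mu)$ is diagonal, $D_{v}\Phi(0;\mu)(t_{1},t_{2})=(\ell_{k}(\mu)t_{1},\ell_{l}(\mu)t_{2})$ with $\ell_{k},\ell_{l}$ smooth and $\ell_{k}(\mu^{*})=\ell_{l}(\mu^{*})=0$; moreover, since $0$ is a \emph{simple} eigenvalue of the $2\times2$ Fourier symbol of $L(\mu^{*})$ at frequency $k$ and at frequency $l$ (Proposition \ref{spectral_proposition} gives algebraic $=$ geometric multiplicity), first-order perturbation theory shows $D_{\mu}\ell_{j}(\mu^{*})$ is a nonzero multiple of $D_{\mu}\lambda_{j}(\mu^{*})$, and a direct computation from \eqref{lambda_c_def} gives $\det D_{\mu}(\lambda_{k},\lambda_{l})(\mu^{*})=\frac{M^{4}(l-k)\bigl(A\overline{\gamma}\pi-c M\bigr)}{8\pi^{5}Sk^{3}l^{3}}$ at $c=c_{\pm}(k;\tau_1^{*})$, which is nonzero because $l\neq k$ and $A\overline{\gamma}\pi-c M=\mp M\sqrt{R(k;\tau_1^{*})/(2kM^{3}\pi)}\neq0$ by hypothesis (ii); hence $D_{\mu}(\ell_{k},\ell_{l})(\mu^{*})$ is invertible.  (c) If $v=t_{2}v_{l}$ then $v+\overline y(t_{2}v_{l};\mu)$ has Fourier support in $l\mathbb Z$ — this follows from covariance of the construction under the shift $\alpha\mapsto\alpha+2\pi/l$ together with the uniqueness of the auxiliary solution applied inside the invariant subspace of $l\mathbb Z$-supported functions — and since $k\notin l\mathbb Z$ we get $\Phi^{(k)}(0,t_{2};\mu)\equiv0$, so Hadamard's lemma yields $\Phi^{(k)}=t_{1}\Psi^{(k)}$ with $\Psi^{(k)}$ smooth and $\Psi^{(k)}(0,0;\mu)=\ell_{k}(\mu)$.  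The same argument with $v=t_{1}v_{k}$ shows $\Phi^{(l)}(t_{1},0;\mu)\equiv0$ \emph{precisely when $l/k\notin\mathbb N$} (if $l/k\in\mathbb N$ then $l\in k\mathbb Z$ and the resonant self-interaction of the $v_{k}$ mode feeds mode $l$), which is the source of the dichotomy.

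In the non-resonant case $l/k\notin\mathbb N$ we have $\Phi=(t_{1}\Psi^{(k)},t_{2}\Psi^{(l)})$ with $\Psi^{(k)}(0,0;\mu)=\ell_{k}(\mu)$ and $\Psi^{(l)}(0,0;\mu)=\ell_{l}(\mu)$, and I apply the implicit function theorem to $(t_{1},t_{2},\mu)\mapsto(\Psi^{(k)},\Psi^{(l)})$ at $(0,0,\mu^{*})$, whose $\mu$-derivative $D_{\mu}(\ell_{k},\ell_{l})(\mu^{*})$ is invertible by (b); this produces a smooth $\overline\mu(t_{1},t_{2})$ with $\overline\mu(0,0)=\mu^{*}$ and $\Psi^{(k)}=\Psi^{(l)}=0$ along it, so $w(t_{1},t_{2})$ satisfies $\Pi_{\mathcal R}F=0$ (by construction of $\overline y$) and $\Pi_{\mathcal V^{\dagger}}F=\Phi=0$, hence $F(w;\overline\mu)=0$ after shrinking neighborhoods so $w\in U_{b,h}$; the estimate $\overline y=O(t_1^{2}+t_2^{2})$ follows from $\overline y(v;\mu)=O(\|v\|^{2})$ and $\overline\mu\to\mu^{*}$.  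In the resonant case $l/k\in\mathbb N$ only $\Phi^{(k)}=t_{1}\Psi^{(k)}$ is available, so I pass to polar coordinates $(t_{1},t_{2})=(r\cos\beta,r\sin\beta)$: then $\Phi^{(k)}=r\cos\beta\,\Psi^{(k)}$ and, by Hadamard's lemma in $r$, $\Phi^{(l)}(r\cos\beta,r\sin\beta;\mu)=r\,\widetilde\Phi^{(l)}(r,\beta;\mu)$ with $\widetilde\Phi^{(l)}$ smooth and $\widetilde\Phi^{(l)}(0,\beta;\mu)=\sin\beta\,\ell_{l}(\mu)$.  For each $\beta_{0}$ with $\sin\beta_{0}\neq0$ I apply the implicit function theorem to $(r,\beta,\mu)\mapsto\bigl(\Psi^{(k)}(r\cos\beta,r\sin\beta;\mu),\widetilde\Phi^{(l)}(r,\beta;\mu)\bigr)$ at $(0,\beta_{0},\mu^{*})$: the $\mu$-derivative has rows $D_{\mu}\ell_{k}(\mu^{*})$ and $\sin\beta_{0}\,D_{\mu}\ell_{l}(\mu^{*})$, independent by (b) and $\sin\beta_{0}\neq0$; covering $|\beta|\in[\delta,\pi-\delta]$ by finitely many such neighborhoods and gluing the locally unique solutions gives a smooth $\overline\mu(r,\beta)$ with $\overline\mu(0,\beta)=\mu^{*}$ and $F(w(r,\beta);\overline\mu(r,\beta))=0$.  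The restriction $\delta<|\beta|<\pi-\delta$ is forced by the need for $\sin\beta\neq0$: for $\beta$ near $0$ or $\pi$ the equation $\widetilde\Phi^{(l)}=0$ no longer forces $\ell_{l}(\mu)=0$ at $r=0$, so bifurcation near $\mu^{*}$ cannot be concluded; at $\beta=\pm\pi/2$ the factor $\cos\beta$ annihilates $\Phi^{(k)}$, so only one scalar equation constrains $\mu\in\mathbb R^{2}$ and $\overline\mu$ ceases to be unique there.  In both cases the stated uniqueness of $\overline\mu$ (for $t_{1}t_{2}\neq0$, resp.\ $\beta\neq\pm\pi/2$) follows by applying the implicit function theorem to $\Pi_{\mathcal V^{\dagger}}F(w;\mu)$ after dividing its two components by $t_{1}$ and $t_{2}$ (resp.\ by $r$), exactly as above.

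The step I expect to be the main obstacle is the rigorous justification of fact (c) — that the Lyapunov--Schmidt corrector $\overline y$ inherits the Fourier-support restriction of its argument.  This requires verifying that every nonlinear ingredient of $F$ (the renormalized curve $\widetilde Z[\cdot]$, the Birkhoff--Rott operator $B[\cdot]$ and its decomposition \eqref{eq:BirkRottDecomp}, and hence $\Theta$ and $\Gamma$) is genuinely covariant under the rational shifts $\alpha\mapsto\alpha+2\pi/j$, so that the $j\mathbb Z$-supported subspaces are invariant and the uniqueness clause of the auxiliary implicit function theorem can be invoked within them; one must also confirm that $F$ is smooth in the $X$-topology on $U_{b,h}$ to the order the implicit function theorem demands (available from \cite{AkersAmbroseSulonPeriodicHydroelasticWaves}).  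Once (c) is in hand, what remains is Taylor bookkeeping together with the two implicit function theorem arguments whose non-degeneracy is precisely hypothesis (ii).
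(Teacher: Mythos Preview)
Your proposal is correct and follows essentially the same architecture as the paper's proof: Lyapunov--Schmidt reduction to obtain $\overline y$ (the paper's Lemma~\ref{baldi-toland-like-lemma}), the periodicity/Fourier-support argument to show $\Phi^{(k)}(0,t_2;\mu)\equiv0$ and, in the non-resonant case, $\Phi^{(l)}(t_1,0;\mu)\equiv0$ (the paper's Lemma~\ref{smoothness_of_Phi_lemma}), factoring out $t_1,t_2$ (your Hadamard lemma; the paper's integral definitions of $\Psi_k,\Psi_l$ in \eqref{defPsi1}--\eqref{defPsi2}), and a final implicit-function-theorem step whose $2\times2$ Jacobian in $\mu$ is nonsingular precisely because $k\neq l$ and $R(k;\tau_1^{*})>0$; the resonant case is handled identically, via polar coordinates with the determinant acquiring a factor $\sin\beta$.

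The only noteworthy deviation is in how you verify the nondegeneracy: you pass through eigenvalue perturbation, arguing $D_\mu\ell_j(\mu^*)$ is a nonzero multiple of $D_\mu\lambda_j(\mu^*)$ and then computing $\det D_\mu(\lambda_k,\lambda_l)(\mu^*)$ directly from \eqref{lambda_c_def}, whereas the paper computes the projection matrix $\bigl[\Pi_j\,\partial_\bullet L(\mu^*)\,\partial v/\partial t_i\bigr]$ and its determinant \eqref{eq:det_calculated} by hand.  Your route is a bit cleaner conceptually and makes transparent why only $k\neq l$ and $R(k;\tau_1^*)>0$ matter; the paper's route has the advantage of producing the exact constant (useful if one later wants quantitative control).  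Both rely on the semisimplicity statement in Proposition~\ref{spectral_proposition} (so that $\langle v_j,\phi_j\rangle\neq0$), which you correctly invoke.  Your identification of fact~(c) as the technical crux matches the paper's treatment exactly: the paper proves it by restricting the auxiliary implicit function theorem to the invariant subspace $Z_l$ of $2\pi/l$-periodic functions and appealing to uniqueness of $\overline y$.
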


\begin{remark}
The non-degeneracy condition (ii) is also required in \cite{AkersAmbroseSulonPeriodicHydroelasticWaves} to prove bifurcation when $N_{0}\left( c_{\pm }\left(
k; \tau_1 \right),\tau_1 \right) =1$.  Thus, we extend some of the results of \cite{AkersAmbroseSulonPeriodicHydroelasticWaves} to the $N_{0}\left( c_{\pm }\left(
k; \tau_1 \right),\tau_1 \right) =2$ case (although different methods are used here than in \cite{AkersAmbroseSulonPeriodicHydroelasticWaves}).  Indeed,
if $R\left(k;\tau_1\right)>0$, the linear problem has traveling waves, and if one uses linear stability analysis on the flat state, the spectrum is pure imaginary.  The quantity $R\left(k;\tau_1\right)$ is negative only when the mean shear $\overline{\gamma}$ is too large for the parameter choice, which results in a linear problem which does not support traveling waves.
\end{remark}

The results of Theorem \ref{thm:main} can be summarized in the following informal, non-technical fashion:

\begin{theorem}[Main theorem, non-technical version]
Define the mapping $F$ and parameter $\mu$ as above.  If conditions (i), (ii) of Theorem \ref{thm:main} are met for some pair of integers $k,l$ at $\tau_1 = \tau_1^\ast,$ then there exists a smooth sheet of solutions $\left(\theta, \gamma_1; \mu \right)$ to the traveling wave problem
\begin{equation*}
F\left(\theta, \gamma_1; \mu \right) = 0
\end{equation*}
bifurcating from the trivial solution $\left(0,0;\mu\right)$ at $\mu=\mu^\ast$ (where $\mu^\ast$ is as defined above).
\end{theorem}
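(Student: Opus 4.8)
The plan is to carry out a Lyapunov--Schmidt reduction of the equation $F(\theta,\gamma_1;\mu)=0$ about the trivial solution $(0,0;\mu^*)$, using the Fredholm structure of $L(\mu^*)$ guaranteed by the ``identity plus compact'' form of $F$ (Proposition \ref{mapping_proposition}). Under hypotheses (i) and (ii), Proposition \ref{spectral_proposition} tells us that $0$ is an eigenvalue of $L(\mu^*)$ with $N_0 = 2$, so $\mathcal{V} = \func{Ker} L(\mu^*)$ is two-dimensional with basis $\{v_k, v_l\}$; since $L(\mu^*)$ is self-adjoint up to the structure of the Fourier multipliers, $\mathcal{V}^\dagger$ is likewise two-dimensional and $\mathcal{R} = (\mathcal{V}^\dagger)^\perp$ has codimension $2$. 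First I would write $(\theta,\gamma_1) = v + y$ with $v = t_1 v_k + t_2 v_l \in \mathcal{V}$ and $y \in \mathcal{R}^\dagger$ (the complement), split $F=0$ into the \emph{auxiliary} equation $\Pi_{\mathcal{R}} F(v+y;\mu) = 0$ and the \emph{bifurcation} equation $\Pi_{\mathcal{V}^\dagger} F(v+y;\mu) = 0$, and solve the auxiliary equation first: since $D_y \Pi_{\mathcal{R}} F|_{(0,0;\mu^*)} = \Pi_{\mathcal{R}} L(\mu^*)|_{\mathcal{R}^\dagger}$ is an isomorphism onto $\mathcal{R}$, the implicit function theorem yields a smooth $\overline{y}(v;\mu) = O(|v|^2)$ solving it on a neighborhood, matching the stated $O(t_1^2+t_2^2)$ (resp.\ $O(r^2)$) estimate. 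This reduces everything to the two-dimensional bifurcation equation $\Phi(t_1,t_2;\mu) := \Pi_{\mathcal{V}^\dagger} F(t_1 v_k + t_2 v_l + \overline{y}; \mu) = 0$, a map from a neighborhood in $\mathbb{R}^2 \times \mathbb{R}^2$ into $\mathbb{R}^2 \cong \mathcal{V}^\dagger$.

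For the \textbf{non-resonant case} $l/k \notin \mathbb{N}$, I would follow the Baldi--Toland argument \cite{baldiToland1}: the idea is to solve $\Phi(t_1,t_2;\mu)=0$ for $\mu$ as a function of $(t_1,t_2)$ via the implicit function theorem. Since $\Phi(0,0;\mu^*)=0$ and $\Phi$ vanishes to second order in $(t_1,t_2)$ (every term of $F$ beyond the linear one is at least quadratic, and $v$ has no mean-zero-defect issues), one divides out the lowest-order vanishing: writing $\Phi = \Phi(t_1,t_2;\mu)$ and examining its Taylor expansion, the quadratic part in $(t_1,t_2)$ produces Fourier modes at $k\pm l, 2k, 2l$, none of which collide with the kernel modes $k,l$ precisely because $l/k \notin \mathbb{N}$ rules out $2k = l$ and similar resonances. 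This non-collision is what forces the leading $(t_1,t_2)$-dependence of $\Phi$ to factor through the $\mu$-derivative $D_\mu \Phi|_{(0,0;\mu^*)}$, which one checks is invertible using the explicit $\tau_1$- and $c$-dependence of the eigenvalues $\lambda_k, \lambda_l$ in \eqref{lambda_c_def} together with non-degeneracy (ii) (which keeps $c_\pm(k;\tau_1^*)$ real and the relevant Jacobian nonsingular). The implicit function theorem then gives the smooth $\overline{\mu}(t_1,t_2)$ with $\overline{\mu}(0,0)=\mu^*$, and uniqueness is the standard IFT uniqueness, valid where $t_1 \neq 0 \neq t_2$ so that the division by the vanishing factor is legitimate.

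For the \textbf{resonant case} $l/k \in \mathbb{N}$, the obstruction is exactly that the quadratic (or higher) self-interaction of the mode $v_k$ generates the mode $v_l$, so $D_\mu \Phi$ alone no longer controls the bifurcation equation and the above division fails at $t_2 = 0$. Following \cite{EhrnstromEscherWahlen}, I would pass to polar-type coordinates $t_1 = r\cos\beta$, $t_2 = r\sin\beta$ with $r > 0$ small and $\beta$ restricted to $\delta < |\beta| < \pi-\delta$ (bounded away from $0, \pm\pi/2, \pm\pi$ so that neither $t_1$ nor $t_2$ degenerates), factor a power of $r$ out of $\Phi$, and apply the implicit function theorem to the resulting reduced system to solve for $\overline{\mu}(r,\beta)$ with $\overline{\mu}(0,\beta) = \mu^*$. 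The $\beta$-restriction and the exclusion of $\beta = \pm\pi/2$ in the uniqueness statement are precisely the price paid for the resonance: at those angles one of the amplitude ratios vanishes and the factored system loses invertibility.

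The \textbf{main obstacle} is the resonant case: one must identify the correct order of vanishing of $\Phi$ in $r$ (it is governed by the lowest nonlinear interaction that feeds the $v_l$ mode, which depends on whether $l = 2k$, $l = 3k$, etc.), verify that after extracting that power of $r$ the Jacobian in $\mu$ is nonsingular uniformly for $\beta \in (\delta,\pi-\delta)$, and ensure the error term $\overline{y}$ and the reduced map remain smooth up to $r=0$. Checking the non-degeneracy of $D_\mu \Phi$ in both cases — which amounts to an explicit computation with the Fourier multipliers $\widehat{L_{ij}}(k)$ and the formula \eqref{lambda_c_def}, using condition (ii) to guarantee $R(k;\tau_1^*) > 0$ and hence a genuine (not tangential) crossing — is the essential quantitative input; everything else is bookkeeping of the Lyapunov--Schmidt splitting and routine application of the implicit function theorem in the Banach spaces of Proposition \ref{mapping_proposition}.
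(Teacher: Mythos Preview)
Your overall plan---Lyapunov--Schmidt reduction, solve the auxiliary equation for $\overline{y}$ via the implicit function theorem, then solve the two-dimensional bifurcation equation $\Phi(t_1,t_2;\mu)=0$ for $\mu$---matches the paper. The genuine gap is in how you desingularize $\Phi$ before applying the implicit function theorem. You argue heuristically that quadratic interactions of the kernel modes miss modes $k,l$, but this only addresses second-order terms; what is actually needed (and what the paper proves in Lemma~\ref{smoothness_of_Phi_lemma}) is the \emph{exact} vanishing $\Phi_k(0,t_2;\mu)\equiv 0$ for all $t_2,\mu$, and---in the non-resonant case only---$\Phi_l(t_1,0;\mu)\equiv 0$. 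These are proved not by Fourier mode-counting of the nonlinearity but by a periodicity argument: one restricts the whole problem to the subspace of $2\pi/l$-periodic (resp.\ $2\pi/k$-periodic) functions and invokes the uniqueness of $\overline{y}$ there, so that $F(v+\overline{y})$ inherits the smaller period and is orthogonal to $\phi_k$ (resp.\ $\phi_l$). With exact vanishing in hand one defines the smooth quotients $\Psi_k=\Phi_k/t_1$, $\Psi_l=\Phi_l/t_2$ (via the integral representations \eqref{defPsi1}--\eqref{defPsi2}), and it is to $\Psi=(\Psi_k,\Psi_l)$ that the implicit function theorem is applied: $\Psi(0,0;\mu^*)=0$ and $D_\mu\Psi(0,0;\mu^*)$ equals the matrix of mixed partials $\bigl(\partial^2_{t_i,\mu_j}\Phi\bigr)(0,0;\mu^*)=\bigl(\Pi_j\partial_{\mu_j}L(\mu^*)\,\partial v/\partial t_i\bigr)$, whose determinant is computed explicitly in \eqref{eq:det_calculated} and is nonzero by (ii). Your phrase ``factor through $D_\mu\Phi$'' is not quite right since $D_\mu\Phi(0,0;\mu^*)=0$; the relevant nonzero object is this mixed second derivative.

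Your description of the resonant case is also off. The paper does \emph{not} factor out a higher power of $r$ depending on whether $l=2k,3k,\ldots$; the factorization is instead asymmetric and always first order. Since $\Phi_k(0,t_2;\mu)\equiv 0$ still holds (it only used $k<l$), one divides $\Phi_k$ by $t_1=r\cos\beta$ as before, but one divides $\Phi_l$ only by $r$ (using merely $\Phi_l(0,0;\mu)=0$), obtaining the modified pair $(\Psi_k',\Psi_l')$ of \eqref{defPsi1prime}--\eqref{defPsi2prime}. The Jacobian in $\mu$ at $r=0$ then equals $\sin\beta$ times the same $2\times 2$ determinant as in the non-resonant case, so the implicit function theorem applies precisely when $\sin\beta\neq 0$; this is why $\beta$ is kept away from $0$ and $\pm\pi$ (not from $\pm\pi/2$, which enters only in the uniqueness clause via $\cos\beta\neq 0$).
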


We prove Theorem \ref{thm:main} throughout the remainder of this section.  The proof begins with an application of the classical Lyapunov--Schmidt reduction to our equations; this reduction process utilizes the implicit function theorem (an interested reader might consult, for example, Chapter 13 of \cite{hunterNachtergaele} for a statement of the implicit function theorem). Then, the implicit function theorem is employed again to solve the reduced equations.  We proceed to apply the Lyapunov--Schmidt process to (\ref{eq:FequalsZero}).

\subsubsection{Lyapunov--Schmidt reduction} \label{subsubsec:lyapunovSchmidt}

Throughout, fix $b,\,h > 0$, and let $\mu = \left(c, \tau_1\right)$ be our two-dimensional parameter such that at a special value $\tau_1^\ast > 0$, there are two distinct integer solutions $k$, $l$ of $p\left( l,k;\tau_1^\ast\right) =0$, where $p$ is as defined in Proposition \ref{spectral_proposition}. Also, without loss of generality, we take $k < l$.  As Proposition \ref{spectral_proposition} indicates, the linearization $L\left(\mu\right)$ has a two-dimensional kernel at $\mu=\mu^\ast$, where $\mu^* := \left(c_{\pm }\left(k ; \tau_1^* \right),\tau_1^*  \right)$.  Let $v$ be an element of this 2D null
space $\mathcal{V}:=\func{ker} L \left( \mu^* \right) $, i.e.%
\begin{equation}\label{vParameterized}
v\left( \alpha \right) =t_{1}%
\begin{bmatrix}
-\frac{\pi }{c_{\pm }\left(k ; \tau_1^* \right) M}\sin \left( k\alpha \right) \\ 
\cos \left( k\alpha \right)%
\end{bmatrix}%
+t_{2}%
\begin{bmatrix}
-\frac{\pi }{c_{\pm }\left(k ; \tau_1^* \right) M}\sin \left( l\alpha \right) \\ 
\cos \left( l\alpha \right)%
\end{bmatrix}%
.
\end{equation}%

Let $L^{\dagger} (\mu^*)$ denote the Hermitian adjoint of $L (\mu^*)$, and put
\begin{eqnarray*}
\mathcal{V^\dagger}&:=& \func{Ker} L^{\dagger} (\mu^*),\\  
\mathcal{R^\dagger}&:=& \func{Range} L^{\dagger}(\mu^*),
\end{eqnarray*}
Note that from the results in Section \ref{subsec:mappingProperties}, we have that $L\left(\mu\right)$ is Fredholm. As in \cite{baldiToland1} (and in general for Lyapunov--Schmidt reductions), write
each $w$ in the domain $U_{b,h}$ as%
\begin{equation*}
w=v+y,
\end{equation*}%
where $y\in \mathcal{R^\dagger}\cap U_{b,h}$. With the notation $\mathcal{R} := \func{Range} L(\mu^*)$, we can write the entire spaces as the direct sums%
\begin{eqnarray*}
X &=& \mathcal{V^\dagger} \oplus \mathcal{R}, \\
U_{b,h} &=&\mathcal{V}\oplus \left( \mathcal{R^\dagger}\cap U_{b,h}\right) ,
\end{eqnarray*}%
and let $\Pi_\mathcal{V^\dagger},\Pi_\mathcal{R}$ be the projections onto $\mathcal{V^\dagger}$ and $\mathcal{R}$, respectively. \
Our primary equation%
\begin{equation*}
F\left( w;\mu \right) =0
\end{equation*}%
is then equivalent to the system%
\begin{equation*}
\begin{cases}
\Pi_\mathcal{V^\dagger}F\left( v+y;\mu \right) =0, \\
\Pi_\mathcal{R}F\left( v+y;\mu \right) =0.
\end{cases}
\end{equation*}%
The former is the bifurcation equation; the latter is the ``auxiliary"
equation. We prove a lemma analogous to Lemma 6.1 of \cite{baldiToland1}, which
concerns the auxiliary equation.

\begin{lemma}
\label{baldi-toland-like-lemma}For fixed $b, h > 0$, there exist neighborhoods $\mathcal{N_V}$ of $0 \in \mathcal{V}$, $\mathcal{N}_{\mu}$ of $\mu^* \in \mathbb{R}^2$, and $\mathcal{N_{R^\dagger}}$ of $0 \in \mathcal{R^\dagger}\cap U_{b,h}$, and a smooth function $\overline{y}:\mathcal{N_V} \times \mathcal{N}_{\mu} \rightarrow \mathcal{N_{R^\dagger}}$
such that%
\begin{equation*}
\Pi_\mathcal{R}F\left( v+\overline{y}\left( v;\mu \right) ;\mu \right) =0
\end{equation*}%
for all $\left( v,\mu \right) \in \mathcal{N_V} \times \mathcal{N}_{\mu}$. In these neighborhoods $\mathcal{N_V} \times \mathcal{N}_{\mu}$ and $\mathcal{N_{R^\dagger}}$, $\overline{y}$ is unique
(i.e. if $\Pi_\mathcal{R}F\left( v+y,\mu \right) =0$ with $\left( v,\mu \right) \in \mathcal{N_V} \times \mathcal{N}_{\mu}$ and $y\in \mathcal{N_{R^\dagger}}$, then $y=%
\overline{y}\left( v;\mu \right)$). Also, for all $\left( 0,\mu \right) \in \mathcal{N_V} \times \mathcal{N}_\mu$, 
\begin{eqnarray}
\overline{y}\left( 0;\mu \right) &=0 \label{baldiTolandLikeLemmaFirstResult},\\
D\overline{y}\left( \mu \right) &=0 \label{baldiTolandLikeLemmaSecondResult},\\
\partial _{c}\overline{y}\left( 0;\mu \right) &=0 \label{baldiTolandLikeLemmaThirdResult}, \\
\partial _{\tau_1}\overline{y}\left( 0;\mu \right) &=0, \label{baldiTolandLikeLemmaFourthResult}
\end{eqnarray}
(where, as before, $D$ denotes the Fr\'{e}chet derivative about $0$). 
\end{lemma}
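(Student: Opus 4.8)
The plan is to produce $\overline{y}$ by a single application of the implicit function theorem to the auxiliary equation, and then to read off the four derivative identities by differentiating the defining relation. First I would introduce the smooth map
\[
G:\left(\mathcal{R^\dagger}\cap U_{b,h}\right)\times\mathcal{V}\times\mathbb{R}^2\longrightarrow\mathcal{R},\qquad G\left(y,v;\mu\right):=\Pi_\mathcal{R}F\left(v+y;\mu\right),
\]
whose smoothness follows from that of $F$ (Section \ref{subsec:mappingProperties} and Proposition \ref{mapping_proposition}), and observe that $G\left(0,0;\mu^*\right)=0$ because $\left(\theta,\gamma_1\right)=\left(0,0\right)$ solves $F\left(0,0;\mu\right)=0$ for every $\mu$ (the flat rest state; indeed $\Theta\left(0,0;\mu\right)=\Gamma\left(0,0;\mu\right)=0$, since the Birkhoff--Rott integral of a constant along a straight interface vanishes and $\Xi\left(0;\tau_1,\sigma\right)=0$).

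The crucial step is to check that the partial Fr\'echet derivative $D_yG\left(0,0;\mu^*\right):\mathcal{R^\dagger}\to\mathcal{R}$ is a bounded linear isomorphism. By the chain rule and $F\left(0,0;\mu^*\right)=0$ one has $D_yG\left(0,0;\mu^*\right)=\Pi_\mathcal{R}\,L\left(\mu^*\right)\big|_{\mathcal{R^\dagger}}$, and since $L\left(\mu^*\right)$ maps $\mathcal{R^\dagger}$ into $\operatorname{Range}L\left(\mu^*\right)=\mathcal{R}$ this is simply $L\left(\mu^*\right)\big|_{\mathcal{R^\dagger}}$. Because $F$ is identity-plus-compact (Proposition \ref{mapping_proposition}), $L\left(\mu^*\right)$ is Fredholm of index zero; under the standing hypotheses ($p\left(l,k;\tau_1^*\right)=0$ and $R\left(k;\tau_1^*\right)>0$), Proposition \ref{spectral_proposition} gives $\ker L\left(\mu^*\right)=\mathcal{V}$ two-dimensional, $\operatorname{Range}L\left(\mu^*\right)=\mathcal{R}$, and the complementary splittings $X=\mathcal{V}\oplus\mathcal{R^\dagger}=\mathcal{V^\dagger}\oplus\mathcal{R}$. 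Hence $L\left(\mu^*\right)\big|_{\mathcal{R^\dagger}}$ is injective (its kernel $\mathcal{V}$ meets $\mathcal{R^\dagger}$ only at $0$) and onto $\mathcal{R}$, so it is a continuous bijection, hence an isomorphism by the bounded inverse theorem. The implicit function theorem then yields neighborhoods $\mathcal{N_V}\ni0$, $\mathcal{N}_\mu\ni\mu^*$, $\mathcal{N_{R^\dagger}}\ni0$ and a unique smooth $\overline{y}:\mathcal{N_V}\times\mathcal{N}_\mu\to\mathcal{N_{R^\dagger}}$ with $\Pi_\mathcal{R}F\left(v+\overline{y}\left(v;\mu\right);\mu\right)=0$; after shrinking, $v+\overline{y}\left(v;\mu\right)$ together with the curves $\widetilde{Z}$ and $\widetilde{Z}\circ\Theta$ it generates stay inside the chord-arc sets defining $U_{b,h}$, which is legitimate since $\left(0;\mu^*\right)$ is interior there.

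Finally I would extract the derivative identities. Since $\left(0,0\right)$ solves $F\left(0,0;\mu\right)=0$ for all $\mu$, we have $\Pi_\mathcal{R}F\left(0+0;\mu\right)=0$, so the uniqueness clause of the implicit function theorem forces $\overline{y}\left(0;\mu\right)=0$ for all $\mu\in\mathcal{N}_\mu$, which gives \eqref{baldiTolandLikeLemmaFirstResult}; differentiating this identically-zero function of $\mu$ yields $\partial_c\overline{y}\left(0;\mu\right)=\partial_{\tau_1}\overline{y}\left(0;\mu\right)=0$, i.e. \eqref{baldiTolandLikeLemmaThirdResult}, \eqref{baldiTolandLikeLemmaFourthResult}. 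For \eqref{baldiTolandLikeLemmaSecondResult}, differentiate $\Pi_\mathcal{R}F\left(v+\overline{y}\left(v;\mu\right);\mu\right)=0$ in $v$ at $v=0$ and use $\overline{y}\left(0;\mu\right)=0$ to get $\Pi_\mathcal{R}L\left(\mu\right)\left(I+D\overline{y}\left(\mu\right)\right)=0$ as operators $\mathcal{V}\to\mathcal{R}$; at $\mu=\mu^*$ one has $L\left(\mu^*\right)\big|_\mathcal{V}=0$, so $\Pi_\mathcal{R}L\left(\mu^*\right)D\overline{y}\left(\mu^*\right)=0$, and invertibility of $\Pi_\mathcal{R}L\left(\mu^*\right)\big|_{\mathcal{R^\dagger}}$ forces $D\overline{y}\left(\mu^*\right)=0$ (more generally $D\overline{y}\left(\mu\right)=-\left(\Pi_\mathcal{R}L\left(\mu\right)\big|_{\mathcal{R^\dagger}}\right)^{-1}\Pi_\mathcal{R}L\left(\mu\right)\big|_\mathcal{V}$, which vanishes exactly when $L\left(\mu\right)\mathcal{V}\subseteq\mathcal{R}$).

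I expect the main obstacle to be the second paragraph: cleanly establishing that $\Pi_\mathcal{R}L\left(\mu^*\right)\big|_{\mathcal{R^\dagger}}$ is an isomorphism onto $\mathcal{R}$, which is precisely where the Fredholm structure of Proposition \ref{mapping_proposition} and the exact two-dimensional description of $\mathcal{V}=E_0\left(\mu^*\right)$ from Proposition \ref{spectral_proposition} are needed, along with the book-keeping required to keep $v+\overline{y}\left(v;\mu\right)$ inside $U_{b,h}$. The remaining items (smoothness of $G$ and the four derivative computations) are then routine.
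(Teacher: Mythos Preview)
Your argument is correct and follows essentially the same route as the paper: define the auxiliary map $G$, verify that its partial derivative in $y$ at the base point equals $\Pi_{\mathcal R}L(\mu^*)\big|_{\mathcal R^\dagger}$ and is an isomorphism onto $\mathcal R$, invoke the implicit function theorem, and then read off \eqref{baldiTolandLikeLemmaFirstResult}--\eqref{baldiTolandLikeLemmaFourthResult} by differentiating. Your treatment is in fact a bit more careful than the paper's in two respects: you build the $\mu$-dependence into $G$ from the outset (the paper fixes $\mu^*$ in the definition of $G$ but then silently lets $\mu$ vary), and you spell out the isomorphism via Fredholm index zero plus the bounded inverse theorem rather than just asserting injectivity and surjectivity.

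One small slip: in your parenthetical formula $D\overline{y}(\mu)=-\bigl(\Pi_{\mathcal R}L(\mu)\big|_{\mathcal R^\dagger}\bigr)^{-1}\Pi_{\mathcal R}L(\mu)\big|_{\mathcal V}$, the condition for vanishing is $L(\mu)\mathcal V\subseteq\ker\Pi_{\mathcal R}=\mathcal V^\dagger$, not $L(\mu)\mathcal V\subseteq\mathcal R$. This is harmless for the lemma since at $\mu=\mu^*$ one has $L(\mu^*)\mathcal V=\{0\}\subseteq\mathcal V^\dagger$ trivially; and for the downstream use in the paper (computing $\partial^2_{t_i,c}\Phi_j$ and $\partial^2_{t_i,\tau_1}\Phi_j$ at $\mu^*$) only the value $D\overline{y}(\mu^*)=0$ is actually needed, because any residual term picks up a factor $\Pi_jL(\mu^*)$, which vanishes identically since $\phi_j\in\ker L^\dagger(\mu^*)$.
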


\begin{proof}
Define
\begin{eqnarray*}
G &:& \mathcal{V} \times \mathcal{R^\dagger} \rightarrow \mathcal{R}, \\
G\left( v,y\right) &=& \Pi_\mathcal{R}F\left( v+y;\mu ^{\ast }\right) ,
\end{eqnarray*}%
so
\begin{equation*}
G_{y}\left( 0,0\right) y=\Pi_\mathcal{R}L\left( \mu ^{\ast }\right) y,
\end{equation*}%
(throughout, let a subscript of ``$y$" or ``$v$" denote the corresponding Fr\'{e}chet derivative). 

Let $y \in \mathcal{R^\dagger}\cap U_{b,h}$ be nontrivial. We have%
\begin{equation*}
\Pi_\mathcal{R} L\left( \mu ^{\ast }\right) y \neq 0\text{,}
\end{equation*}%
since $y$ is not in the null space of $L\left( \mu ^{\ast }\right) $ and
$L\left( \mu ^{\ast }\right) y$ is in $\mathcal{R}$ by definition. Also, the operator $G_{y}\left( 0,0\right)$ is clearly surjective on $\mathcal{R}$.  Thus, we can apply the
implicit function theorem to produce such a (unique) $\overline{y}$ in the
given neighborhood. Note that%
\begin{equation*}
G\left( 0,0\right) =0,
\end{equation*}%
so, if $v=0$, we find $\overline{y}\left( 0;\mu \right) =0$ as well. 

For the next result (\ref{baldiTolandLikeLemmaSecondResult}), we have, by the implicit function theorem,
\begin{equation*}
G\left( v,\overline{y}\left( v;\mu \right) \right) =0
\end{equation*}%
in the appropriate neighborhoods. Then, by applying the chain rule (and noting the
previous result that $\overline{y}\left( 0;\mu \right) =0)$, we have for all 
$v\in \mathcal{V}$,%
\begin{equation}
G_{v}\left( 0,0\right) v+G_{y}\left( 0,0\right) \overline{y}_{v}\left( 0;\mu
\right) v=0.  \label{chain_rule_implicit}
\end{equation}%
Examine the following:
\begin{equation*}
G_{v}\left( 0,0\right) v =\Pi_\mathcal{R}L\left( \mu ^{\ast }\right) v
=0,
\end{equation*}%
since $v$ is in the null space of $L\left( \mu ^{\ast }\right) $. \
Therefore, the first term on the left-hand side of (\ref{chain_rule_implicit}) vanishes, and we
are left with the equation%
\begin{equation*}
G_{y}\left( 0,0\right) \overline{y}_{v}\left( 0;\mu \right) v=0
\end{equation*}%
(for all $v\in \mathcal{V}$). For our application of the implicit function theorem, we demonstrated the nonsingularity of $%
G_{y}\left( 0,0\right) $  (which hence admits only the trivial solution in $\mathcal{R^\dagger}$); thus,
$\overline{y}_{v}\left( 0;\mu \right) v=0$ (again, for all $v\in \mathcal{V}$). Since the domain
of $\overline{y}$ is $\mathcal{V}$, we have (for all $\mu \in \mathcal{N}_\mu$) that $\overline{y}%
_{v}\left( 0;\mu \right) $ is the zero operator. This is precisely (\ref{baldiTolandLikeLemmaSecondResult}). 

The two results (\ref{baldiTolandLikeLemmaThirdResult}), (\ref{baldiTolandLikeLemmaFourthResult}) follow by differentiating (\ref{baldiTolandLikeLemmaFirstResult}) with
respect to $c$ or $\tau_1$. 
\end{proof}

Now, define%
\begin{equation}
\label{def:Phi}
\Phi \left( t_{1},t_{2};\mu \right) :=\Pi_\mathcal{V^\dagger}F\left( v+\overline{y}\left(
v;\mu \right) ;\mu \right)
\end{equation}
(this is the Lyapunov--Schmidt reduction). 
(Notice that $t_{1}$ and $t_{2}$ do not appear in the right-hand side of \eqref{def:Phi}; however, recall that
$v$ may be specified by $t_{1}$ and $t_{2}$ as in \eqref{vParameterized}.)

We can clearly decompose $\Phi $
further. A simple calculation indicates that, for general integers $j>0$, the eigenfunction $\phi _{j}\left( \cdot ;\mu
\right) \in H_{\text{per,odd}}^{2}\times H_{\text{per,0,even}}^{1}$ of the adjoint $%
L^{\dagger }\left( \mu \right) \,$corresponding to eigenvalue $\lambda
_{j}\left( \mu \right) $ may be written as%
\begin{equation*}
\phi _{j}\left( \alpha ;\mu \right) :=%
\begin{bmatrix}
-\frac{n\left( j;\mu \right) \sin \left(j\alpha\right) }{2\pi ^{2}j} \\ 
\left( -cM+A\overline{\gamma }\pi \right) \cos \left(j\alpha\right)%
\end{bmatrix},
\end{equation*}%
where%
\begin{equation}
\label{def:nFromAdjointEigenfcns}
n\left( j;\mu \right) =AM^{3}+\left( 2Ac\overline{\gamma }M\pi ^{2}-2%
\overline{\gamma }^{2}\pi ^{3}\right) j+2M\pi ^{2}S\tau _{1} j^{2}.
\end{equation}
Clearly, the null space of $L^{\dagger }\left( \mu ^{\ast }\right) $ is
spanned by $\left\{ \phi _{k}\left( \cdot ;\mu ^{\ast }\right) ,\phi
_{l}\left( \cdot ;\mu ^{\ast }\right) \right\}$; let $\Pi _{k}$, $\Pi _{l}$ denote
the projections onto each one-dimensional subspace spanned by $\phi _{k}\left( \cdot ;\mu ^{\ast }\right)$, $\phi _{l}\left( \cdot ;\mu ^{\ast }\right)$, respectively. For general $x \in X$, these projections may be written as
\begin{eqnarray*}
\Pi _{k} x &=& \left<x, \phi _{k}\left( \cdot ;\mu ^{\ast }\right)\right> \mathbin{/} \left<\phi _{k}\left( \cdot ;\mu ^{\ast }\right), \phi _{k}\left( \cdot ;\mu ^{\ast }\right)\right>,\\
\Pi _{l} x &=& \left<x, \phi _{l}\left( \cdot ;\mu ^{\ast }\right)\right> \mathbin{/} \left<\phi _{l}\left( \cdot ;\mu ^{\ast }\right), \phi _{l}\left( \cdot ;\mu ^{\ast }\right)\right>,
\end{eqnarray*}%
where $\left<\cdot,\cdot\right>$ denotes the usual inner product on $L^2 \times L^2$. Then, we can write the bifurcation equation as the system%
\begin{equation}
\label{eq:bifurcEqs}
\begin{cases}
0 =\Phi _{k}\left( t_{1},t_{2};\mu \right) :=\Pi _{k}\Phi \left(
t_{1},t_{2};\mu \right) =\Pi _{k}F\left( v+\overline{y}\left( v;\mu \right)
;\mu \right), \\
0 =\Phi _{l}\left( t_{1},t_{2};\mu \right) :=\Pi _{l}\Phi \left(
t_{1},t_{2};\mu \right) =\Pi _{l}F\left( v+\overline{y}\left( v;\mu \right)
;\mu \right).
\end{cases}
\end{equation}

We proceed to prove another result analogous to that which appears in \cite{baldiToland1}.

\begin{lemma}
\label{smoothness_of_Phi_lemma}
The first bifurcation equation satisfies
\begin{equation}
\label{firstFromLemma}\Phi _{k}\left( 0,t_{2};\mu \right) =0 \;\;\;\text{ for all }t_{2}\text{, }\mu. \\
\end{equation}
Furthermore, if $l \mathbin{/} k \notin \mathbb{N}$, then
\begin{equation}
\label{secondFromLemma}\Phi _{l}\left( t_{1},0;\mu \right) = 0 \;\;\; \text{ for all }t_{1}\text{, }\mu
\end{equation}
additionally holds.
\end{lemma}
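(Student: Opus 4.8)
The plan is to exploit a discrete scaling symmetry of the reformulated system. For a positive integer $m$, let $X_m\subseteq X$ be the subspace of pairs $(\theta,\gamma_1)$ whose Fourier coefficients vanish off the integer multiples of $m$; equivalently, $\theta(\alpha+2\pi/m)\equiv\theta(\alpha)$ and $\gamma_1(\alpha+2\pi/m)\equiv\gamma_1(\alpha)$. The key claim is that $F(\cdot;\mu)$ carries $X_m\cap U_{b,h}$ into $X_m$ for every $\mu$. I would prove this by tracking $2\pi/m$-periodicity through each ingredient of $\Theta$ and $\Gamma$: for odd $\theta\in X_m$ one has $\overline{\sin\theta}=0$, and a short computation gives $\widetilde{Z}[\theta](\alpha+2\pi/m)=\widetilde{Z}[\theta](\alpha)+M/m$, whence $\widetilde{T}[\theta]$, $\widetilde{N}[\theta]$ and $|\partial_\alpha\widetilde{Z}[\theta]|$ are $2\pi/m$-periodic; the operators $H$ and $\partial_\alpha^{-4}$, and all pointwise algebraic and differential operations, obviously preserve $2\pi/m$-periodicity; and the Birkhoff--Rott integral $B[\widetilde{Z}[\theta]](\gamma_1+\overline{\gamma})$ does as well — this is the one substantive verification, obtained by the substitution $\alpha'\mapsto\alpha'+2\pi/m$ in \eqref{eq:birkhoffRottDef}, after which the two $M/m$ shifts coming from $\widetilde{Z}[\theta](\alpha+2\pi/m)$ and $\widetilde{Z}[\theta](\alpha'+2\pi/m)$ cancel inside the cotangent. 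Since $\mathcal{K}'[\omega]\gamma=B[\omega]\gamma-(2i\omega_\alpha)^{-1}H\gamma$, the same periodicity passes to $\mathcal{K}'[\widetilde{Z}[\Theta(\cdot)]]$ once one knows $\Theta(\cdot)\in X_m$. Feeding all of this into \eqref{def:Theta} and \eqref{def:Gamma} shows that $\Theta$, $\Gamma$, and hence $F$, preserve $X_m$.

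Next I would check that the Lyapunov--Schmidt splitting restricts to $X_m$ for $m\in\{k,l\}$. Because $v_j$ and $\phi_j$ are supported purely on the $j$-th Fourier mode, for $m=l$ we have $v_k,\phi_k\notin X_l$ (as $0<k<l$ forces $l\nmid k$), so $\mathcal{V}\cap X_l=\textup{span}\{v_l\}$, $\mathcal{V}^\dagger\cap X_l=\textup{span}\{\phi_l\}$, the projections $\Pi_{\mathcal{V}^\dagger}$ and $\Pi_{\mathcal{R}}$ map $X_l$ into itself, and $X_l=(\mathcal{V}^\dagger\cap X_l)\oplus(\mathcal{R}\cap X_l)$, $X_l\cap U_{b,h}=(\mathcal{V}\cap X_l)\oplus(\mathcal{R}^\dagger\cap X_l\cap U_{b,h})$. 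For $m=k$ the identical statements hold \emph{provided} $v_l\notin X_k$, i.e. provided $k\nmid l$, i.e. $l/k\notin\mathbb{N}$; this is exactly where the hypothesis of \eqref{secondFromLemma} enters, and it is also why \eqref{firstFromLemma} requires no such hypothesis.

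Granting these two points, \eqref{firstFromLemma} follows quickly. With $t_1=0$ we have $v=t_2v_l\in\mathcal{V}\cap X_l$. Solving $\Pi_{\mathcal{R}}F(v+y;\mu)=0$ for $y$ in $\mathcal{R}^\dagger\cap X_l\cap U_{b,h}$ by the same implicit-function-theorem argument as in Lemma~\ref{baldi-toland-like-lemma}, now applied to the restriction of that equation to $X_l$, yields a unique small solution lying in $X_l$; by the uniqueness clause of Lemma~\ref{baldi-toland-like-lemma} it must coincide with $\overline{y}(t_2v_l;\mu)$, so $\overline{y}(t_2v_l;\mu)\in X_l$ and therefore $w=t_2v_l+\overline{y}(t_2v_l;\mu)\in X_l$. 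Then $F(w;\mu)\in X_l$ has vanishing Fourier coefficient at mode $k\notin l\mathbb{Z}$, while $\phi_k$ is supported there, so $\langle F(w;\mu),\phi_k\rangle=0$ and hence $\Phi_k(0,t_2;\mu)=\Pi_kF(w;\mu)=0$, for all $t_2$ and $\mu$. The argument for \eqref{secondFromLemma} is word-for-word the same with $k$ and $l$ interchanged and $X_k$ in place of $X_l$: under the hypothesis $l/k\notin\mathbb{N}$ one gets $\overline{y}(t_1v_k;\mu)\in X_k$, so $w\in X_k$ has no mode-$l$ content, and $\Phi_l(t_1,0;\mu)=\Pi_lF(w;\mu)=0$.

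The main obstacle is the invariance claim $F(X_m\cap U_{b,h})\subseteq X_m$, and within it the $2\pi/m$-periodicity of the Birkhoff--Rott integral and of the smooth remainder $\mathcal{K}'$; everything afterward is bookkeeping with Fourier supports together with an appeal to the uniqueness already established in Lemma~\ref{baldi-toland-like-lemma}.
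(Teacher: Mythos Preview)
Your proposal is correct and follows essentially the same approach as the paper's proof: both arguments restrict to the subspace of $2\pi/l$-periodic (resp. $2\pi/k$-periodic) pairs, re-run the auxiliary-equation argument there, invoke the uniqueness clause of Lemma~\ref{baldi-toland-like-lemma} to conclude $\overline{y}\in X_l$ (resp. $X_k$), and then use orthogonality of Fourier modes to kill $\Pi_k$ (resp. $\Pi_l$). Your write-up is in fact more careful than the paper's on the invariance step---the paper simply asserts ``$F$ preserves periodicity,'' whereas you actually track it through $\widetilde{Z}[\theta]$, the Birkhoff--Rott integral, and $\mathcal{K}'$---and you also spell out why the non-resonance hypothesis $l/k\notin\mathbb{N}$ is needed only for \eqref{secondFromLemma}.
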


\begin{proof}
Define $Z_{l}$ to be the closure of 
\begin{equation*}
\text{span}\left\{ 
\begin{bmatrix}
-\frac{\pi }{c_\pm\left(k; \tau_1^\ast \right)M}\sin \left( jl\alpha \right) \\ 
\cos \left( jl\alpha \right)%
\end{bmatrix}%
:j\in 
%TCIMACRO{\U{2115} }%
%BeginExpansion
\mathbb{N}
%EndExpansion
\right\}
\end{equation*}%
in $L^{2}\left( 0,2 \pi \right) \times L^{2}\left( 0,2 \pi \right) $; i.e. $Z_{l}$ is the
subpace of $2\pi / l$-periodic functions in $L^{2}\left( 0,2 \pi \right) \times
L^{2}\left( 0,2 \pi \right) $. Apply Lemma \ref{baldi-toland-like-lemma}, albeit replace domain $U_{b,h}$
with $Z_{l}$ and use
\begin{equation*}
v=t_{2}%
\begin{bmatrix}
-\frac{\pi }{c_\pm\left(k; \tau_1^\ast \right)M}\sin \left( l\alpha \right) \\ 
\cos \left( l\alpha \right)%
\end{bmatrix}.%
\end{equation*}%
By uniqueness of the $\overline{y}$ produced by Lemma \ref{baldi-toland-like-lemma}, we have $\overline{y}\left( v;\mu
\right) \in \mathcal{R^\dagger} \cap Z_{l}$, and hence $v+\overline{y}\left( v;\mu \right) $ is 
$2\pi /l$-periodic. Given that $F$ preserves periodicity, we have that $%
F\left( v+\overline{y}\left( v;\mu \right) \right) $ is $2\pi /l$-periodic
as well. Then, recalling $k < l$ and projecting onto $\phi _{k}\left( \cdot ;\mu ^{\ast }\right)$, we have that $\Phi _{k}\left( 0,t_{2};\mu \right) =0$.

If the non-resonance condition ($l \mathbin{/} k \notin \mathbb{N}$) holds, we can show $\Phi
_{l}\left( t_{1},0;\mu \right) =0$ by a similar argument.  This condition is needed because we would not expect the $2\pi /k$-periodic $F\left( v+\overline{y}\left( v;\mu \right) \right) $ to be orthogonal to $\phi _{l}\left( \cdot ;\mu ^{\ast }\right)$ if $l$ is a multiple of $k$.
\end{proof}

With these results at hand, we wish to eventually look for solutions to the bifurcation equations (\ref{eq:bifurcEqs}). To do so, we employ a method which closely follows that of \cite{baldiToland1} and \cite{EhrnstromEscherWahlen}: we first manipulate the bifurcation equations, then apply the implicit function theorem to an equivalent problem.

\subsubsection{Solving the reduced equations}
For now, assume $l \mathbin{/} k \notin \mathbb{N}$; we later adjust the argument to cover the resonant case.  As in \cite{baldiToland1}, define%
\begin{equation}
\label{defPsi}
\Psi :=\left( \Psi _{k},\Psi _{l}\right) ,
\end{equation}%
where 
\begin{eqnarray}
\Psi _{k}\left( t_{1},t_{2};\mu \right) &:=&\int\nolimits_{0}^{1}\partial
_{t_{1}}\Phi _{k}\left( xt_{1},t_{2};\mu \right) dx,  \label{defPsi1} \\
\Psi _{l}\left( t_{1},t_{2};\mu \right) &:=&\int\nolimits_{0}^{1}\partial
_{t_{2}}\Phi _{l}\left( t_{1},xt_{2};\mu \right) dx.  \label{defPsi2}
\end{eqnarray}
As in \cite{baldiToland1}, solving the bifurcation equations (\ref{eq:bifurcEqs}) is equivalent to solving
\begin{equation*}
\left(\Psi_k\left( t_{1},t_{2};\mu \right), \Psi_l \left( t_{1},t_{2};\mu \right)\right) = \left(0,0\right).
\end{equation*}

\begin{lemma}
\label{lem:relnPhiPsi}
Assume $l \mathbin{/} k \notin \mathbb{N}$.  For nontrivial $ t_{1}$, $\Phi_{k}\left( t_{1},t_{2};\mu \right) = 0$ if and only if $\Psi _{k}\left( t_{1},t_{2};\mu \right) = 0$ (with the same result for $\Phi _{l}$, $\Psi _{l}$ over nontrivial $t_2$).
\end{lemma}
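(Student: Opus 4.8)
The plan is to show directly that $\Psi_k$ is, up to the nonvanishing factor $1/t_1$, equal to $\Phi_k$ on the region where $t_1 \neq 0$; the claimed equivalence is then immediate. First I would observe that $F$ is smooth and $\overline{y}$ is smooth (by Lemma \ref{baldi-toland-like-lemma}), so $\Phi$, and hence each $\Phi_k$, $\Phi_l$, is smooth in all of its arguments. This justifies applying the fundamental theorem of calculus in the step below, and also guarantees that the integrals defining $\Psi_k$, $\Psi_l$ in \eqref{defPsi1}, \eqref{defPsi2} make sense and define smooth functions.

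The key computation is the following. Fix $t_1 \neq 0$, and set $g(x) := \Phi_k(xt_1, t_2; \mu)$. By the chain rule, $g'(x) = t_1 \cdot \left(\partial_{t_1}\Phi_k\right)(xt_1, t_2; \mu)$, where $\partial_{t_1}\Phi_k$ denotes the partial derivative of $\Phi_k$ in its first slot. Hence
\begin{equation*}
\Psi_k(t_1,t_2;\mu) = \int_0^1 \left(\partial_{t_1}\Phi_k\right)(xt_1,t_2;\mu)\,dx = \frac{1}{t_1}\int_0^1 g'(x)\,dx = \frac{g(1)-g(0)}{t_1} = \frac{\Phi_k(t_1,t_2;\mu) - \Phi_k(0,t_2;\mu)}{t_1}.
\end{equation*}
By Lemma \ref{smoothness_of_Phi_lemma}, $\Phi_k(0,t_2;\mu) = 0$ for all $t_2$ and $\mu$, so $\Psi_k(t_1,t_2;\mu) = \Phi_k(t_1,t_2;\mu)/t_1$ whenever $t_1 \neq 0$. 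Since $1/t_1 \neq 0$, this gives $\Phi_k(t_1,t_2;\mu) = 0$ if and only if $\Psi_k(t_1,t_2;\mu) = 0$ for such $t_1$.

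The argument for $\Phi_l$ and $\Psi_l$ is entirely parallel: one sets $h(x) := \Phi_l(t_1, xt_2; \mu)$, differentiates, and integrates to obtain $\Psi_l(t_1,t_2;\mu) = \bigl(\Phi_l(t_1,t_2;\mu) - \Phi_l(t_1,0;\mu)\bigr)/t_2$ for $t_2 \neq 0$. Here it is precisely the hypothesis $l/k \notin \mathbb{N}$ that is needed, since the second conclusion of Lemma \ref{smoothness_of_Phi_lemma} — namely $\Phi_l(t_1,0;\mu) = 0$ — holds only in the non-resonant case; with that in hand, $\Psi_l(t_1,t_2;\mu) = \Phi_l(t_1,t_2;\mu)/t_2$ for $t_2 \neq 0$, yielding the stated equivalence. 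There is no genuine obstacle here: the lemma is a bookkeeping step (a Hadamard-type factorization of the bifurcation functions) whose only inputs are the smoothness of $F$ and $\overline{y}$ and the vanishing identities already established in Lemma \ref{smoothness_of_Phi_lemma}. The one point requiring mild care is simply to keep track of which argument is being scaled and to invoke the correct vanishing identity for each of $\Phi_k$ and $\Phi_l$.
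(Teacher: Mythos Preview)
Your proof is correct and follows essentially the same approach as the paper: both arguments establish the identity $\Psi_k = \Phi_k/t_1$ for $t_1\neq 0$ (and the analogous identity for $\Psi_l$) via the fundamental theorem of calculus together with the vanishing identities of Lemma~\ref{smoothness_of_Phi_lemma}. The paper phrases this as the substitution $u=t_1 x$ and leaves the appeal to $\Phi_k(0,t_2;\mu)=0$ implicit, whereas you spell out both steps explicitly.
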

\begin{proof}
Note that, for $t_1 \neq 0$,
\begin{equation}
\Psi _{k}\left( t_{1},t_{2};\mu \right) =\frac{1}{t_{1}}\Phi _{k}\left( t_{1},t_{2};\mu \right) \label{relationship_between_Phi_Psi1}
\end{equation}%
(this can be shown by using the substitution $u=t_1 x$); likewise, we have (for $t_2 \neq 0$)
\begin{equation}
\Psi _{l}\left( t_{1},t_{2};\mu \right) =\frac{1}{t_{2}}\Phi _{l}\left(
t_{1},t_{2};\mu \right) .  \label{relationship_between_Phi_Psi2}
\end{equation} 
\end{proof}

We next show smoothness of $\Psi _{k},\Psi _{l}$ (provided that the non-resonance condition holds).  This result essentially follows directly from Lemma \ref{smoothness_of_Phi_lemma}.

\begin{lemma}
Provided $l \mathbin{/} k \notin \mathbb{N}$, the mappings $\Psi _{k}$ and $\Psi _{l}$, as defined in (\ref{defPsi1}) and (\ref%
{defPsi2}), are smooth.
\label{smoothness_of_Psi_lemma}
\end{lemma}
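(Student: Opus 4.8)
The plan is to reduce the claim to the smoothness of the reduced map $\Phi$ and then invoke differentiation under the integral sign over the compact interval $[0,1]$, so that essentially no new estimates are needed.

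First I would record that $\Phi$, and hence its components $\Phi_k=\Pi_k\Phi$ and $\Phi_l=\Pi_l\Phi$, is a smooth function of $(t_1,t_2;\mu)$ on a neighborhood of $(0,0;\mu^*)$. The map $(t_1,t_2)\mapsto v$ given by \eqref{vParameterized} is linear, hence smooth, into the (two-dimensional) space $\mathcal{V}\subseteq X$; by Lemma~\ref{baldi-toland-like-lemma}, $(v,\mu)\mapsto\overline{y}(v;\mu)$ is smooth from $\mathcal{N_V}\times\mathcal{N}_{\mu}$ into $\mathcal{R^\dagger}\cap U_{b,h}$; the map $w\mapsto F(w;\mu)$ is smooth on the open set $U_{b,h}$ by the mapping properties of Section~\ref{subsec:mappingProperties} (this smoothness already underlies the use of the implicit function theorem in Lemma~\ref{baldi-toland-like-lemma}); and $\Pi_\mathcal{V^\dagger}$, $\Pi_k$, $\Pi_l$ are bounded linear operators. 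A composition of smooth maps between open subsets of Banach spaces is smooth, so it remains only to check that $v+\overline{y}(v;\mu)\in U_{b,h}$ for $(t_1,t_2)$ near $(0,0)$; this holds after shrinking the neighborhoods, since $v\to 0$ and $\overline{y}(0;\mu)=0$ by \eqref{baldiTolandLikeLemmaFirstResult}. In particular $\partial_{t_1}\Phi_k$ and $\partial_{t_2}\Phi_l$ are smooth near $(0,0;\mu^*)$.

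Next I would treat $\Psi_k$; the argument for $\Psi_l$ is identical after interchanging the roles of $t_1$ and $t_2$. The integrand in \eqref{defPsi1}, namely $(x,t_1,t_2;\mu)\mapsto\partial_{t_1}\Phi_k(xt_1,t_2;\mu)$, is the composition of the smooth rescaling $(x,t_1,t_2;\mu)\mapsto(xt_1,t_2;\mu)$ with the smooth map $\partial_{t_1}\Phi_k$, hence is jointly smooth; and for $(t_1,t_2;\mu)$ in a fixed small neighborhood, $(xt_1,t_2;\mu)$ stays in the domain of $\Phi_k$ for all $x\in[0,1]$. Because $[0,1]$ is compact, each partial derivative of the integrand in the variables $(t_1,t_2;\mu)$ is continuous, hence bounded uniformly in $x\in[0,1]$ on compact parameter sets; the Leibniz rule for differentiating under the integral sign applies, and iterating it shows that $\Psi_k(t_1,t_2;\mu)=\int_0^1\partial_{t_1}\Phi_k(xt_1,t_2;\mu)\,dx$ is smooth, with all of its derivatives obtained by differentiating the integrand. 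The same reasoning applied to \eqref{defPsi2} gives smoothness of $\Psi_l$.

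I do not expect a genuine obstacle: the only care required is bookkeeping the chain of compositions so that each factor is evaluated in the open set on which it is smooth, and the standard justification of differentiation under the integral over a compact interval. It may be worth remarking explicitly that the hypothesis $l\mathbin{/}k\notin\mathbb{N}$ is not what makes $\Psi_k$ and $\Psi_l$ smooth; it is carried along because, via Lemma~\ref{smoothness_of_Phi_lemma} and Lemma~\ref{lem:relnPhiPsi}, it yields the identifications $\Psi_k=\Phi_k/t_1$ and $\Psi_l=\Phi_l/t_2$ off the coordinate axes, which is precisely what makes $\Psi=(\Psi_k,\Psi_l)$ the right object for the implicit function theorem argument in the following step.
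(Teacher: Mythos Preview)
Your argument is correct. You proceed by differentiating under the integral sign in the definitions \eqref{defPsi1}--\eqref{defPsi2}, using only that $\Phi$ is smooth and that $[0,1]$ is compact; this yields smoothness uniformly, including at the axes $t_1=0$ and $t_2=0$. The paper takes a slightly different route: away from the axes it uses the quotient identities $\Psi_k=\Phi_k/t_1$ and $\Psi_l=\Phi_l/t_2$ (equations \eqref{relationship_between_Phi_Psi1}--\eqref{relationship_between_Phi_Psi2}) to transfer smoothness from $\Phi$, and on the axes it appeals to the vanishing statements \eqref{firstFromLemma}--\eqref{secondFromLemma} of Lemma~\ref{smoothness_of_Phi_lemma} to obtain regularity there. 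Your approach is a bit more direct and, as you correctly observe, shows that the hypothesis $l/k\notin\mathbb{N}$ plays no role in the smoothness of $\Psi_k$ and $\Psi_l$ themselves; the paper's argument uses it only because \eqref{secondFromLemma} is what it invokes for $\Psi_l$ at $t_2=0$.
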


\begin{proof} From \eqref{relationship_between_Phi_Psi1} and \eqref{relationship_between_Phi_Psi2}, we see that
the smoothness of $\Phi$ is inherited by $\Psi_{k}$ (except at $t_{1}=0$) and $\Psi_{l}$ (except when $t_{2}=0$).
In these cases, though, smoothness follows from \eqref{firstFromLemma} and \eqref{secondFromLemma}.  For example,
one may compute for any $t_{2},\mu $%
\begin{eqnarray*}
\partial _{t_{2}}\Phi _{k}\left( 0,t_{2};\mu \right)
&=&\lim_{h\rightarrow 0}\frac{\Phi _{k}\left( 0,t_{2}+h;\mu \right)
-\Phi _{k}\left( 0,t_{2};\mu \right) }{h} \\
&=&0,
\end{eqnarray*}%
since both terms in the numerator vanish by Lemma \ref{smoothness_of_Phi_lemma}; thus, 
\begin{equation*}
\lim_{t_{2}\rightarrow 0}\partial _{t_{2}}\Phi _{k}\left( 0,t_{2};\mu
\right) =\partial _{t_{2}}\Phi _{k}\left( 0,0;\mu \right) =0,
\end{equation*}%
and we have regularity near zero.  We omit further details.
\end{proof}

We now will apply the implicit function theorem to the problem $\Psi\left(t_1,t_2; \mu\right) = 0$ (which, by Lemma \ref{lem:relnPhiPsi}, is equivalent to solving the bifurcation equations in the non-resonant case). For the resonant case, we define an alternate (yet still equivalent) system $\Psi^\prime = 0$ using a polar coordinates parametrization similar to that used in \cite{EhrnstromEscherWahlen}.  The following theorem is 
similar to that which appears at the end of \cite{baldiToland1}.

\begin{theorem}\label{thm:solveBifurcEqs}
Define the polynomials $p$ and $R$ as in (\ref{eq:plkDef}) and (\ref{eq:RkDef}). Suppose $k < l$ are a pair of positive integers that satisfy the following criteria for some $\tau_1^*>0$: \\
\begin{enumerate}[label={\upshape(\roman*)}]
\item $p(l,k;\tau_1^*)=0$ (2D null-space condition), and \\

\item $R(k;\tau_1^*) > 0$ (non-degeneracy condition). \\
\end{enumerate}
\begin{description}
\item[Non-resonant case]If $l \mathbin{/} k \notin \mathbb{N}$, then there exist neighborhoods $\mathcal{N}_{t}, \,\mathcal{N}_{\mu}\subseteq\mathbb{R}^2$ of (respectively) $(0,0)$ and $\mu^*$, and a smooth function $\overline{\mu}:\mathcal{N}_{t} \rightarrow \mathcal{N}_{\mu}$ that satisfies $\overline{\mu} \left(0,0\right) = \mu^\ast$ and
\begin{equation*}
\Phi\left(t_1, t_2; \overline{\mu} \left(t_1, t_2\right)\right) = 0
\end{equation*}
for all $\left(t_1, t_2\right) \in \mathcal{N}_{t}$. 
The map $\overline{\mu}$ is unique in the sense that if $\Phi\left(t_1,t_2; \mu \right) = 0$ for $\left(t_1,t_2\right) \in \mathcal{N}_t$ with $t_1 \neq 0$ and $t_2 \neq 0$, then $\mu = \overline{\mu}\left(t_1,t_2\right)$.

\item[Resonant case]If $l \mathbin{/} k \in \mathbb{N}$, then, given $\delta > 0$, there exist neighborhoods $\mathcal{N}_{r}\subseteq \mathbb{R^+}$ and $\mathcal{N}_{\mu}\subseteq\mathbb{R}^2$ of (respectively) $0$ and $\mu^*$, and a smooth function $\overline{\mu}:\mathcal{N}_{r} \times \left( \left(\delta ,\pi -\delta \right) \cup \left(-\pi +\delta , -\delta\right)\right) \rightarrow \mathcal{N}_{\mu}$ that satisfies $\overline{\mu} \left(0,\beta \right) = \mu^\ast$ and
\begin{equation*}
\Phi\left(r \cos \beta, r \sin \beta; \overline{\mu} \left(r, \beta\right)\right) = 0
\end{equation*}
for all $r \in \mathcal{N}_{r}$ and for all $\beta$ satisfying $\delta <\left\vert \beta \right\vert <\pi -\delta $.
The map $\overline{\mu}$ is unique in the sense that if $\Phi\left(r \cos \beta, r \sin \beta; \mu \right) = 0$ for nontrivial $r \in \mathcal{N}_r$ and $\beta \in \left(\delta ,\pi -\delta \right) \cup \left(-\pi +\delta, -\delta \right) \setminus \left\{\pi/2,-\pi/2\right\}$, then $\mu = \overline{\mu}\left(r,\beta \right)$.
\end{description}
\end{theorem}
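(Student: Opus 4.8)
The plan is to reduce, in each case, the pair of bifurcation equations $\Phi=(\Phi_{k},\Phi_{l})=0$ to an equation $G(p;\mu)=0$ in which $p$ is a two‑dimensional parameter, $G$ is smooth near $(0;\mu^{\ast})$, $G(0;\mu^{\ast})=0$, and $D_{\mu}G(0;\mu^{\ast})$ is an invertible $2\times 2$ matrix; the implicit function theorem then produces a smooth $\overline{\mu}$ with $\overline{\mu}(0)=\mu^{\ast}$ and $G(p;\overline{\mu}(p))=0$, and its uniqueness clause supplies the stated uniqueness of $\overline{\mu}$. In the non‑resonant case, following the end of \cite{baldiToland1}, one takes $p=(t_{1},t_{2})$ and $G=\Psi=(\Psi_{k},\Psi_{l})$ from \eqref{defPsi1}--\eqref{defPsi2}, which is smooth by Lemma \ref{smoothness_of_Psi_lemma}; by Lemma \ref{lem:relnPhiPsi} together with \eqref{firstFromLemma} and \eqref{secondFromLemma}, any zero of $\Psi$ is a zero of $\Phi$ (use $\Phi_{k}=t_{1}\Psi_{k}$, $\Phi_{l}=t_{2}\Psi_{l}$ off the respective axes and \eqref{firstFromLemma}, \eqref{secondFromLemma} on them), and conversely any zero of $\Phi$ with $t_{1}t_{2}\neq 0$ is a zero of $\Psi$. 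In the resonant case, following \cite{EhrnstromEscherWahlen}, one sets $t_{1}=r\cos\beta$, $t_{2}=r\sin\beta$: since $\Phi_{k}=t_{1}\Psi_{k}$ (with $\Psi_{k}$ still smooth, its construction using only \eqref{firstFromLemma}) and since $\Phi_{l}$ vanishes at the trivial solution, Taylor expansion with integral remainder along the ray through the origin yields a smooth $h(r,\beta;\mu)$ with $\Phi_{l}=r\,h$; take $p=(r,\beta)$ and $G(r,\beta;\mu)=\bigl(\Psi_{k}(r\cos\beta,r\sin\beta;\mu),\,h(r,\beta;\mu)\bigr)$. For $r>0$, a zero of $G$ is a zero of $\Phi$ irrespective of whether $\cos\beta=0$, and is equivalent to one when $\cos\beta\neq 0$ --- which is exactly why the resonant uniqueness statement excludes $\beta=\pm\pi/2$. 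One then applies the implicit function theorem to $G$ at $(0,\beta_{0};\mu^{\ast})$ for each $\beta_{0}$ with $\delta<|\beta_{0}|<\pi-\delta$, the local solutions patching by uniqueness, uniformly in $\beta_{0}$ on compact subintervals.

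The crux is the computation of $D_{\mu}G(0;\mu^{\ast})$. From $\overline{y}(0;\mu)=0$ and $\overline{y}_{v}(0;\mu)=0$ (that is, \eqref{baldiTolandLikeLemmaFirstResult}--\eqref{baldiTolandLikeLemmaSecondResult} of Lemma \ref{baldi-toland-like-lemma}) together with $DF(0;\mu)=L(\mu)$, one gets $\partial_{t_{1}}\Phi_{j}(0,0;\mu)=\Pi_{j}L(\mu)v_{k}$ and $\partial_{t_{2}}\Phi_{j}(0,0;\mu)=\Pi_{j}L(\mu)v_{l}$. Because $L(\mu)$ is a Fourier multiplier while $v_{k},v_{l},\phi_{k},\phi_{l}$ are supported on Fourier modes $k$ and $l$ respectively, the off‑diagonal terms $\Pi_{l}L(\mu)v_{k}$ and $\Pi_{k}L(\mu)v_{l}$ vanish identically; hence $\Psi(0,0;\mu)=\bigl(\Pi_{k}L(\mu)v_{k},\,\Pi_{l}L(\mu)v_{l}\bigr)$ in the non‑resonant case and $G(0,\beta;\mu)=\bigl(\Pi_{k}L(\mu)v_{k},\,\sin\beta\,\Pi_{l}L(\mu)v_{l}\bigr)$ in the resonant case, both vanishing at $\mu^{\ast}$ since $v_{k},v_{l}\in\mathcal{V}=\func{Ker}L(\mu^{\ast})$. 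To differentiate $\Pi_{k}L(\mu)v_{k}$ at $\mu^{\ast}$, recall from Proposition \ref{spectral_proposition} that, for each $c$, $L(c,\tau_{1}^{\ast})$ has an eigenvector depending smoothly on $c$, equal to $v_{k}$ at $c=c^{\ast}$ (the first component of $\mu^{\ast}$), with eigenvalue $\lambda_{k}(c,\tau_{1}^{\ast})$; likewise this same $v_{k}$ is an eigenvector of $L(c^{\ast},\tau_{1})$ with eigenvalue $\lambda_{k}(c^{\ast},\tau_{1})$ for every $\tau_{1}$. Thus $\partial_{\tau_{1}}[\Pi_{k}L(\mu)v_{k}]|_{\mu^{\ast}}=\partial_{\tau_{1}}\lambda_{k}(\mu^{\ast})\,\Pi_{k}v_{k}$; differentiating the eigenvector relation in $c$ at $c^{\ast}$, using $\lambda_{k}(\mu^{\ast})=0$ and that $\Pi_{k}$ annihilates $\mathcal{R}=\func{Range}L(\mu^{\ast})$ (which contains $L(\mu^{\ast})$ applied to the $c$‑derivative at $c^{\ast}$ of that eigenvector family), yields $\partial_{c}[\Pi_{k}L(\mu)v_{k}]|_{\mu^{\ast}}=\partial_{c}\lambda_{k}(\mu^{\ast})\,\Pi_{k}v_{k}$, and the same with $k$ replaced by $l$. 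Writing $\Pi_{k}v_{k}=s_{k}\phi_{k}$, $\Pi_{l}v_{l}=s_{l}\phi_{l}$ and identifying the ranges of $\Pi_{k},\Pi_{l}$ with $\mathbb{R}$, the matrix $D_{\mu}G(0;\mu^{\ast})$ has rows $s_{k}\nabla_{\mu}\lambda_{k}(\mu^{\ast})$ and $s_{l}\nabla_{\mu}\lambda_{l}(\mu^{\ast})$ (the latter multiplied by $\sin\beta_{0}\neq 0$ in the resonant case), so $\det D_{\mu}G(0;\mu^{\ast})$ is a nonzero multiple of $s_{k}s_{l}\bigl(\partial_{c}\lambda_{k}\,\partial_{\tau_{1}}\lambda_{l}-\partial_{\tau_{1}}\lambda_{k}\,\partial_{c}\lambda_{l}\bigr)(\mu^{\ast})$.

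It remains to see this is nonzero. From \eqref{lambda_c_def}, $\partial_{\tau_{1}}\lambda_{j}(\mu^{\ast})=M^{2}/(4\pi^{2}j^{2})$ and $\partial_{c}\lambda_{j}(\mu^{\ast})=M^{2}(A\overline{\gamma}\pi-c^{\ast}M)/(2\pi^{3}Sj^{3})$, so the bracketed quantity equals $M^{4}(A\overline{\gamma}\pi-c^{\ast}M)(l-k)/(8\pi^{5}Sk^{3}l^{3})$; this is nonzero because $k\neq l$ and because, by \eqref{c_pm}, $A\overline{\gamma}\pi-c^{\ast}M=\mp M\sqrt{R(k;\tau_{1}^{\ast})/(2kM^{3}\pi)}\neq 0$ under the non‑degeneracy hypothesis (ii). Finally $s_{k}=\langle v_{k},\phi_{k}\rangle/\langle\phi_{k},\phi_{k}\rangle\neq 0$ and likewise $s_{l}\neq 0$: on the two‑dimensional Fourier mode‑$k$ subspace, $0$ is an eigenvalue of $L(\mu^{\ast})$ with eigenvector $v_{k}$ and of the Hilbert adjoint with eigenvector $\phi_{k}$, and were $\langle v_{k},\phi_{k}\rangle=0$ the vector $v_{k}$ would lie in the range as well as the kernel of that restriction, forcing a nontrivial Jordan block and contradicting the equality of algebraic and geometric multiplicities asserted in Proposition \ref{spectral_proposition}. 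Hence $D_{\mu}G(0;\mu^{\ast})$ is invertible, the implicit function theorem applies, and $\Phi(p;\overline{\mu}(p))=0$ follows from the factorizations together with \eqref{firstFromLemma}--\eqref{secondFromLemma}, the uniqueness assertions coming from the uniqueness clause of the implicit function theorem. I expect the main obstacle to be precisely this Jacobian computation --- in particular, recognizing that the Fourier‑diagonal structure of $L$ decouples the two reduced equations to leading order and that the $\mu$‑derivative of $\Pi_{j}L(\mu)v_{j}$ at $\mu^{\ast}$ equals $s_{j}\nabla_{\mu}\lambda_{j}(\mu^{\ast})$ --- after which invertibility reduces to the explicit inequality furnished by hypothesis (ii).
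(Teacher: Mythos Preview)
Your proof is correct and follows the same overall architecture as the paper: factor $\Phi_{k}=t_{1}\Psi_{k}$, $\Phi_{l}=t_{2}\Psi_{l}$ (respectively $\Phi_{l}=r\,h$ in polar coordinates for the resonant case), verify $\Psi(0,0;\mu^{\ast})=0$, compute $D_{\mu}\Psi(0,0;\mu^{\ast})$, and invoke the implicit function theorem. The resonant desingularization you describe via Taylor expansion along rays is exactly the paper's $\Psi_{l}'$ (the integral of the radial derivative of $\Phi_{l}$), and your observation that the Jacobian acquires a factor $\sin\beta$ matches the paper's computation precisely.

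The one substantive difference is in how the Jacobian is evaluated. The paper writes out the entries $\Pi_{j}\partial_{c}L(\mu^{\ast})v_{j}$ and $\Pi_{j}\partial_{\tau_{1}}L(\mu^{\ast})v_{j}$ directly from the Fourier symbols $\widehat{L_{ij}}(k)$ and computes the determinant \eqref{eq:det_calculated} by brute force. You instead use eigenvalue perturbation: since $\Pi_{j}$ annihilates $\mathcal{R}=\operatorname{Range}L(\mu^{\ast})$, differentiating $L(\mu)\tilde{v}_{j}(\mu)=\lambda_{j}(\mu)\tilde{v}_{j}(\mu)$ at $\mu^{\ast}$ collapses the entry to $\partial_{\mu}\lambda_{j}(\mu^{\ast})\,\Pi_{j}v_{j}$. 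This is cleaner and more transparent --- it makes manifest that nonsingularity reduces to linear independence of $\nabla_{\mu}\lambda_{k}(\mu^{\ast})$ and $\nabla_{\mu}\lambda_{l}(\mu^{\ast})$, which you then read off directly from \eqref{lambda_c_def}. The paper's explicit formula, on the other hand, has the virtue of displaying the exact determinant (useful if one wanted quantitative bounds on the neighborhoods). Your argument that $s_{j}=\Pi_{j}v_{j}\neq 0$ via semisimplicity is also correct: on the mode-$j$ block the eigenvalues are $0$ and $1$, so the block is diagonalizable and kernel meets range trivially.
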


\begin{proof}
We begin with the non-resonant case.  Noting condition (i), we may define $\Psi$, $\Psi_k$, and $\Psi_l$ as in (\ref{defPsi}), (\ref{defPsi1}), and (\ref{defPsi2}), and the mapping $\Psi$ is smooth by an application of Lemma \ref{smoothness_of_Psi_lemma}. We wish to apply the implicit function theorem to the problem
\begin{equation}
\label{eq:Psi1}
\begin{cases}
0 = \Psi_k \left(t_1,t_2,; \mu\right), \\
0 = \Psi_l \left(t_1,t_2,; \mu\right),
\end{cases}
\end{equation}
which, of course, means that we must look for conditions in which the matrix%
\begin{equation}
\begin{bmatrix}
\partial _{c}\Psi _{k}\left( 0,0;\mu ^{\ast }\right) & \partial _{\tau_1}\Psi
_{k}\left( 0,0;\mu ^{\ast }\right) \\ 
\partial _{c}\Psi _{l}\left( 0,0;\mu ^{\ast }\right) & \partial _{\tau_1}\Psi
_{l}\left( 0,0;\mu ^{\ast }\right)%
\end{bmatrix}%
\label{Psi_matrix}
\end{equation}%
is nonsingular.  The matrix (\ref{Psi_matrix}) is clearly equal to%
\begin{equation}
\begin{bmatrix}
\partial _{t_{1},c}^{2}\Phi _{k}\left( 0,0;\mu ^{\ast }\right) & \partial
_{t_{1},\tau_1}^{2}\Phi _{k}\left( 0,0;\mu ^{\ast }\right) \\ 
\partial _{t_{2},c}^{2}\Phi _{l}\left( 0,0;\mu ^{\ast }\right) & \partial
_{t_{2},\tau_1}^{2}\Phi _{l}\left( 0,0;\mu ^{\ast }\right)%
\end{bmatrix}
\label{Phi_matrix}
\end{equation}%
by the integral definitions (\ref{defPsi1}), (\ref{defPsi2})---we integrate an
expression with no $x$ dependence on $[0,1]$; for example,%
\begin{eqnarray*}
\partial _{c}\Psi _{k}\left( 0,0,\mu ^{\ast }\right)
&=&\int\nolimits_{0}^{1}\partial _{t_{1},c}^{2}\Phi _{k}\left( 0,0;\mu^\ast
\right) dx, \\
&=&\partial _{t_{1},c}^{2}\Phi _{k}\left( 0,0;\mu^\ast \right).
\end{eqnarray*}

Explicitly, we are able to calculate the derivatives that appear in (\ref{Phi_matrix}).  Indeed, we may calculate (in the appropriate neighborhoods provided by Lemma \ref{baldi-toland-like-lemma})
\begin{eqnarray*}
\partial _{t_{1}}\Phi _{k}\left( 0,0;\mu \right) &=&\Pi _{k}L\left( \mu \right) \left[ \frac{\partial v}{\partial _{t_{1}}}+D%
\overline{y}\left( \mu \right) \frac{\partial v}{\partial _{t_{1}}}\right], \\
&=&\Pi _{k}L\left( \mu \right) \frac{\partial v}{\partial _{t_{1}}}
\end{eqnarray*}%
(note that $D\overline{y}\left( \mu \right) =0$ by Lemma \ref{baldi-toland-like-lemma}). Similarly,%
\begin{equation*}
\partial _{t_{2}}\Phi _{l}\left( 0,0;\mu \right) =\Pi _{l}L\left( \mu
\right) \frac{\partial v}{\partial _{t_{2}}}.
\end{equation*}  Then,
\begin{eqnarray*}
\partial _{t_{1},c}^{2}\Phi _{k}\left( 0,0;\mu \right)  &=& \Pi _{k} \partial_{c} L\left( \mu \right) \frac{\partial v}{\partial _{t_{1}}}, \\
\partial _{t_{1},\tau_1}^{2}\Phi _{k}\left( 0,0;\mu \right)  &=&  \Pi _{k} \partial_{\tau_1} L\left( \mu \right) \frac{\partial v}{\partial _{t_{1}}},
\end{eqnarray*}
and likewise for $\partial_{t_2} \Phi_l.$ Thus, (\ref{Phi_matrix}) becomes 
\begin{equation}
\begin{bmatrix}
\Pi _{k} \partial_{c} L\left( \mu^\ast \right) \frac{\partial v}{\partial _{t_{1}}} & \Pi _{k} \partial_{\tau_1} L\left( \mu^\ast \right) \frac{\partial v}{\partial _{t_{1}}} \\ 
\Pi _{l} \partial_{c} L\left( \mu^\ast \right) \frac{\partial v}{\partial _{t_{2}}} & \Pi _{l} \partial_{\tau_1} L\left( \mu^\ast \right) \frac{\partial v}{\partial _{t_{2}}}
\end{bmatrix}.
\label{projections_matrix}
\end{equation}
We may then explicitly calculate the entries of (\ref{projections_matrix}), and subsequently the determinant
\begin{eqnarray}
\det \begin{bmatrix}
\Pi _{k} \partial_{c} L\left( \mu^\ast \right) \frac{\partial v}{\partial _{t_{1}}} & \Pi _{k} \partial_{\tau_1} L\left( \mu^\ast \right) \frac{\partial v}{\partial _{t_{1}}} \\ 
\Pi _{l} \partial_{c} L\left( \mu^\ast \right) \frac{\partial v}{\partial _{t_{2}}} & \Pi _{l} \partial_{\tau_1} L\left( \mu^\ast \right) \frac{\partial v}{\partial _{t_{2}}}
\end{bmatrix}\label{eq:det_calculated}\\
=\pm \frac{\left( k-l\right) S \sqrt{2\pi R\left( k;\tau _{1}^{\ast
}\right) }}{\left( c_{\pm } \left (k; \tau_1^\ast \right)\right)^2 \left( a_{k}^{2}+d^{2}\right) \left(
a_{l}^{2}+d^{2}\right) \sqrt{k M}}, \notag
\end{eqnarray}%
where $R\left( k; \tau_1 \right)$ is as defined in (\ref{eq:RkDef}),
\begin{equation*}
a_j := -\frac{n\left( j;\mu^\ast \right) }{2\pi ^{2}j},
\end{equation*} (with $n\left( j;\mu \right)$ as defined in (\ref{def:nFromAdjointEigenfcns})) and
\begin{eqnarray*}
d &:=&  -c_\pm \left (k; \tau_1^\ast \right)M+A\overline{\gamma }\pi,  \\
&=& \mp \sqrt{\frac{R\left(k; \tau_1^\ast \right)}{2 k M \pi}}.
\end{eqnarray*}Since $k \neq l$ by assumption and $R\left(
k ; \tau_1^\ast \right) > 0$ by condition (ii) (this non-degeneracy condition is analogous to equation (8.5) in \cite{baldiToland1}), the determinant (\ref{eq:det_calculated}) is nonzero.

Note
\begin{eqnarray*}
\Psi \left( 0,0;\mu ^{\ast }\right) &=&
\begin{bmatrix}
\int\nolimits_{0}^{1}\partial _{t_{1}}\Phi _{k}\left( 0,0;\mu ^{\ast
}\right) dx \\ 
\int\nolimits_{0}^{1}\partial _{t_{2}}\Phi _{l}\left( 0,0;\mu ^{\ast
}\right) dx
\end{bmatrix},
\\
&=&
\begin{bmatrix}
\Pi _{k}L\left( \mu ^{\ast }\right) \frac{\partial v}{\partial _{t_{1}}} \\ 
\Pi _{l}L\left( \mu ^{\ast }\right) \frac{\partial v}{\partial _{t_{2}}}%
\end{bmatrix},
\\
&=&0,
\end{eqnarray*}%
given that $L\left(\mu^\ast \right)$ has a (double) zero eigenvalue corresponding to eigenfunctions $\partial v/ \partial _{t_{1}},\, %
\partial v / \partial _{t_{2}}$. Thus,  we can apply the implicit function theorem to (\ref{eq:Psi1}) and produce such neighborhoods $\mathcal{N}_{t}, \,\mathcal{N}_{\mu}$ and the function $\overline{\mu}$ for which
\begin{equation*}
\Psi\left(t_1, t_2; \overline{\mu} \left(t_1, t_2\right)\right) = 0.
\end{equation*}
Then, by applying Lemma \ref{lem:relnPhiPsi}, we have the desired result for $\Phi$.  

Next, we approach the resonant case.  From Lemma \ref{smoothness_of_Phi_lemma}, we still have%
\begin{equation*}
\Phi _{k}\left( 0,t_{2};\mu \right) =0\text{ \ \ for all }t_{2},\mu \text{,}
\end{equation*}%
yet $\Phi _{l}\left( t_{1},0;\mu \right)$ might not necessarily vanish.  To proceed, we define $\Psi _{k}$ and $\Psi
_{l}$ differently, this time using polar coordinates $\left(
t_{1},t_{2}\right) =\left( r\cos \beta ,r\sin \beta \right) $. \ Put%
\begin{eqnarray}
\Psi _{k}^\prime\left( r,\beta ;\mu \right) &:=&\int\nolimits_{0}^{1}\partial
_{t_{1}}\Phi _{k}\left( xr\cos \beta ,r\sin \beta ;\mu \right) dx, \label{defPsi1prime}\\
\Psi _{l}^\prime \left( r,\beta ;\mu \right) &:=&\int\nolimits_{0}^{1}\left[
\partial _{t_{1}}\Phi _{l}\left( xr\cos \beta ,xr\sin \beta ;\mu \right)
\cos \beta \right. \label{defPsi2prime}\\
&&\;\;\;+\left. \partial _{t_{2}}\Phi _{l}\left( xr\cos \beta ,xr\sin \beta ;\mu
\right) \sin \beta \right] dx. \notag
\end{eqnarray}%
As in (\ref{relationship_between_Phi_Psi1}), we still have the relation
\begin{equation*}
\Psi _{k}^\prime \left( r,\beta ;\mu \right) =\frac{1}{r\cos \beta }\Phi _{k}\left(
r\cos \beta ,r\sin \beta ;\mu \right), \;\;\; r \cos \beta \neq 0;
\end{equation*}%
yet,
\begin{equation*}
\Psi _{l}^\prime \left( r,\beta ;\mu \right) =\frac{1}{r}\Phi _{l}\left( r\cos \beta
,r\sin \beta ;\mu \right), \;\;\; r \neq 0,
\end{equation*}
(each of which can be shown by a simple substitution as in the proof of Lemma \ref{lem:relnPhiPsi}).  

Thus, in lieu of Lemma \ref{lem:relnPhiPsi}, we have equivalence of the systems
\begin{equation*}
\begin{cases}
\Phi _{k} \left( r\cos \beta ,r\sin \beta ;\mu \right) =0 \\ 
\Phi _{l} \left( r\cos \beta ,r\sin \beta ;\mu \right) =0%
\end{cases}
\end{equation*}%
and
\begin{equation*}
\begin{cases}
\Psi _{k}^\prime\left( r,\beta ;\mu \right) =0 \\ 
\Psi _{l}^\prime\left( r,\beta ;\mu \right) =0%
\end{cases}
\end{equation*}%
whenever $r\cos \beta \neq 0$.  Note that $r=0$ corresponds to the trivial
solution, while $\cos \beta =0$ corresponds to a pure $l$-wave.  Thus, we
concern ourselves with solving $\Psi _{k}^\prime \left( r,\beta ;\mu \right) =\Psi
_{l}^\prime\left( r,\beta ;\mu \right) =0$, and (like in the non-resonant case) appeal to the implicit function theorem
to do so.

Note that $\Psi _{k}^\prime\left( 0,\beta ;\mu \right) =\Psi _{l}^\prime\left(
0,\beta ;\mu \right) =0$ for all $\beta $ and for all $\mu $, and thus we also still have smoothness at $r=0$.  By the integral definitions (\ref{defPsi1prime}), (\ref{defPsi2prime}) and by our work earlier for the non-resonant case, the matrix
\begin{equation}
\label{Psi_matrix_resonant}
\begin{bmatrix}
\partial _{c}\Psi _{k}^\prime \left( 0,\beta ;\mu ^{\ast }\right) & \partial _{\tau
_{1}}\Psi _{k}^\prime\left( 0,\beta ;\mu ^{\ast }\right) \\ 
\partial _{c}\Psi _{l}^\prime\left( 0,\beta ;\mu ^{\ast }\right) & \partial _{\tau
_{1}}\Psi _{l}^\prime\left( 0,\beta ;\mu ^{\ast }\right)%
\end{bmatrix}%
\end{equation}%
is equal to%
\begin{equation*}
\begin{bmatrix}
\Pi _{k}\partial _{c}L\left( \mu ^{\ast }\right) \frac{\partial v}{\partial
t_{1}} & \sim \;\;\; \\ 
\left( \cos \beta \right) \Pi _{l}  \partial _{c}L\left( \mu ^{\ast }\right) 
\frac{\partial v}{\partial t_{1}}+\left( \sin \beta \right) \Pi _{l}\partial
_{c}L\left( \mu ^{\ast }\right) \frac{\partial v}{\partial t_{2}} & \sim \;\;\;%
\end{bmatrix},%
\end{equation*}where the ``$\sim$" in the second column indicate the analogous expression with $\partial_{\tau_1}$ replacing $\partial_c$.  Examining the second row, we see that%
\begin{equation*}
\Pi _{l}\partial _{c}L\left( \mu ^{\ast }\right) \frac{\partial v}{\partial
t_{1}} = \frac{\left\langle \partial _{c}L\left( \mu ^{\ast }\right) \frac{%
\partial v}{\partial t_{1}},\phi _{l}\left( \cdot ;\mu ^{\ast }\right)
\right\rangle }{\left\langle \phi _{l}\left( \cdot ;\mu ^{\ast }\right)
,\phi _{l}\left( \cdot ;\mu ^{\ast }\right) \right\rangle } 
=0,
\end{equation*}%
and likewise%
\begin{equation*}
\Pi _{l}\partial _{\tau _{1}}L\left( \mu ^{\ast }\right) \frac{\partial v}{%
\partial t_{1}}=0,
\end{equation*}%
since $\sin k\alpha $ and $\sin l\alpha $ are orthogonal to each other with
respect to the $L^{2}$ inner product, as are $\cos k\alpha $ and $\cos
l\alpha $. \ Thus, the matrix (\ref{Psi_matrix_resonant}) is equal to%
\begin{equation*}
\begin{bmatrix}
\Pi _{k}\partial _{c}L\left( \mu ^{\ast }\right) \frac{\partial v}{\partial
t_{1}} & \Pi _{k}\partial _{\tau _{1}}L\left( \mu ^{\ast }\right) \frac{%
\partial v}{\partial t_{1}} \\ 
\left( \sin \beta \right) \Pi _{l} \partial _{c}L\left( \mu ^{\ast }\right) 
\frac{\partial v}{\partial t_{2}}& \left( \sin \beta \right) \Pi
_{l}\partial _{\tau _{1}}L\left( \mu ^{\ast }\right) \frac{\partial v}{%
\partial t_{2}}%
\end{bmatrix},
\end{equation*}%
and hence%
\begin{equation*}
\det 
\begin{bmatrix}
\partial _{c}\Psi _{k}^\prime\left( 0,\beta ;\mu ^{\ast }\right) & \partial _{\tau
_{1}}\Psi _{k}^\prime\left( 0,\beta ;\mu ^{\ast }\right) \\ 
\partial _{c}\Psi _{l}^\prime\left( 0,\beta ;\mu ^{\ast }\right) & \partial _{\tau
_{1}}\Psi _{l}^\prime\left( 0,\beta ;\mu ^{\ast }\right)%
\end{bmatrix}%
=\sin \beta \det 
\begin{bmatrix}
\Pi _{k}\partial _{c}L\left( \mu ^{\ast }\right) \frac{\partial v}{\partial
t_{1}} & \Pi _{k}\partial _{\tau _{1}}L\left( \mu ^{\ast }\right) \frac{%
\partial v}{\partial t_{1}} \\ 
\Pi _{l}\partial _{c}L\left( \mu ^{\ast }\right) \frac{\partial v}{\partial
t_{2}} & \Pi _{l}\partial _{\tau _{1}}L\left( \mu ^{\ast }\right) \frac{%
\partial v}{\partial t_{2}}%
\end{bmatrix}%
.
\end{equation*}%
The determinant on the right-hand side was calculated earlier; see (\ref{eq:det_calculated}).  Thus, provided $\sin \beta \neq 0$, the same
non-degeneracy condition $R\left( k;\tau _{1}^{\ast }\right) >0$ ensures that
the implicit function theorem may be applied as in the non-resonant case.  As in \cite{EhrnstromEscherWahlen}, by fixing $\delta$ and considering the compact set $\left[\delta ,\pi -\delta \right] \cup \left[-\pi +\delta, -\delta \right]$, we may produce such a neighborhood $\mathcal{N}_r$ and solution $\overline{\mu}$ that is valid over the entire set $\mathcal{N}_r \times \left(\delta ,\pi -\delta \right) \cup \left( -\pi +\delta,-\delta \right)$.
\end{proof}

We are now ready to prove Theorem \ref{thm:main}; the technical work is largely complete.

\subsubsection{Proof of main theorem}
Decompose the problem $F\left(\theta, \gamma_1; \mu \right) = 0$ via the Lyapunov--Schmidt process detailed in Section \ref{subsubsec:lyapunovSchmidt}.  Theorem \ref{thm:solveBifurcEqs}, which concerns solutions to the bifurcation equation $\Pi_{\mathcal{V}^\dagger} F\left(\theta, \gamma_1; \mu \right) = 0$, provides us with the neighborhoods $\mathcal{N}_t, \mathcal{N_{\mu}}$ and the map $\overline{\mu}$ in the non-resonant case. In the resonant case, we obtain the corresponding neighborhoods and solutions for any choice of $\delta > 0$.  In either case, we can subsequently apply Lemma \ref{baldi-toland-like-lemma} to produce neighborhoods $\mathcal{N}_\mathcal{V},\,\mathcal{N}_{\mathcal{R}^\dagger}$ and the map $\overline{y}$. \qed

\subsubsection{Additional remarks} Note that Theorem \ref{thm:main} provides a two-dimensional sheet of solutions over which both $c$ and $\tau_1$ may vary.  However, the theorem does not cover the existence of a specific curve of solutions in which $\tau_1 = \tau_1^\ast$ is fixed over the entire curve.  As discussed in the introduction, such resonant $\left(k,l\right)$ combination wave solutions (i.e.~Wilton ripples) are of interest.  In the next section, we establish some asymptotic results regarding the specific case $\left(k,l\right) = \left(1,2\right)$ of Wilton ripples.  The results of these asymptotics are subsequently used to provide initial guesses for the numerical computations in Section \ref{sec:numerical}.

\section{Wilton ripples asymptotics}
\label{sec:wiltonRipples}

We return to an earlier form of the traveling wave equations (\ref{eq:gammaEqBeforeSigmaTauDivision}) and (\ref{eq:thetaEq1}) and proceed to apply typical perturbation theory techniques, in order to investigate asymptotics of possible $\left(1,2\right)$ Wilton ripple solutions. Here, we make no claim on the convergence of formal series representations of solutions; such a proof (in the vein of \cite{reederShinbrot2}) will be the subject of future study.

Throughout this section, we assume that $M=2\pi $ and that $\gamma $ has zero
mean (i.e. $\overline{\gamma }=0$). We thus assume a solution of the form%
\begin{eqnarray}
\theta &=&\varepsilon \theta _{1}+\varepsilon ^{2}\theta _{2}+O\left(
\varepsilon ^{3}\right), \label{eq:theta_expansion} \\
\gamma &=&\varepsilon \gamma _{1}+\varepsilon ^{2}\gamma _{2}+O\left(
\varepsilon ^{3}\right) , \label{eq:gamma_expansion}
\end{eqnarray}%
with wave speed%
\begin{equation}
c=c_{0}+\varepsilon c_{1} + O\left(\varepsilon^2 \right), \label{eq:c_expansion}
\end{equation}
where $\varepsilon$ is taken to be small.  We will usually only display the number of terms necessary
in $\varepsilon $ expansions to ensure the equations (\ref{eq:gammaEqBeforeSigmaTauDivision}), (\ref{eq:thetaEq1}) hold up to $%
O\left( \varepsilon ^{2}\right) $. For additional brevity, we present only the final results of the calculations (more detail can be found in \cite{SulonThesis}).

\subsection{Expansion of equations}\label{subsec:expansionEqs}
Substituting the expansions (\ref{eq:theta_expansion}), (\ref{eq:gamma_expansion}), and (\ref{eq:c_expansion}) into (\ref{eq:gammaEqBeforeSigmaTauDivision}) and simplifying (with special care taken with expanding the Birkhoff-Rott integral (\ref{eq:birkhoffRottDef})), we obtain
\begin{eqnarray}
\;\;\;\;\;\;\;\; 0 &=&-S\left( \varepsilon \partial _{\alpha }^{4}\theta _{1}-\varepsilon
\tau _{1}\partial _{\alpha }^{2}\theta _{1}+\varepsilon ^{2}\partial
_{\alpha }^{4}\theta _{2}-\varepsilon ^{2}\tau _{1}\partial _{\alpha
}^{2}\theta _{2}\right) +\varepsilon ^{2}\widetilde{A}\partial _{\alpha
}\left( \theta _{1}^{2}\right) \label{eq:approx1} \\
&&+\varepsilon c_{0}\partial _{\alpha }\gamma _{1}+\varepsilon
^{2}c_{1}\partial _{\alpha }\gamma _{1}+\varepsilon ^{2}c_{0}\partial
_{\alpha }\gamma _{2} \notag \\
&&-A\left( \varepsilon \left(2\theta _{1}\right)+\frac{\varepsilon ^{2}\partial _{\alpha
}\left( \gamma _{1}^{2}\right) }{4}+\varepsilon ^{2}\left(2\theta _{2}\right)\right) \notag \\
&&-A\left(\varepsilon
^{2}c_{0}\partial _{\alpha }\left\{ \left[ -c_{0}\theta _{1}^{2}-\theta
_{1}H\gamma _{1}-\left( \partial _{\alpha }^{-1}\theta _{1}\partial _{\alpha
}H\gamma _{1}-\partial _{\alpha }H\left( \gamma _{1}\partial _{\alpha
}^{-1}\theta _{1}\right) \right) \right] \right\} \right) \notag \\
&&+O\left(\varepsilon^3\right). \notag
\end{eqnarray}%
Similarly, the second equation (\ref{eq:thetaEq1}) becomes%
\begin{equation}
0 = \varepsilon c_{0}\theta _{1}+\frac{\varepsilon }{2}H\gamma
_{1}+\varepsilon ^{2}c_{0}\theta _{2}+\varepsilon ^{2}c_{1}\theta _{1}+\frac{%
\varepsilon ^{2}}{2}H\gamma _{2} +O\left(\varepsilon^3\right).\label{eq:approx2}
\end{equation}%

\subsection{Linearized equations}

Truncating our expansions (\ref{eq:approx1}), (\ref{eq:approx2}) up to and including $O\left(
\varepsilon \right) $, we write the linearization of (\ref{eq:gammaEqBeforeSigmaTauDivision}), (\ref{eq:thetaEq1}) as
\begin{eqnarray*}
-S\left( \partial _{\alpha }^{4}\theta _{1}-\tau _{1}\partial _{\alpha
}^{2}\theta _{1}\right) +c_{0}\partial _{\alpha }\gamma _{1}-2A\theta
_{1} &=&0, \\
c_{0}\theta _{1}+\frac{1}{2}H\gamma _{1} &=&0,
\end{eqnarray*}%
or, in matrix form,%
\begin{equation*}
\mathcal{L}%
\begin{bmatrix}
\theta _{1} \\ 
\gamma _{1}%
\end{bmatrix}%
=0,
\end{equation*}%
where%
\begin{equation*}
\mathcal{L}:=%
\begin{bmatrix}
-S\left( \partial _{\alpha }^{4}-\tau _{1}\partial _{\alpha }^{2}\right) -2A
& c_{0}\partial _{\alpha } \\ 
c_{0} & \frac{1}{2}H%
\end{bmatrix}%
.
\end{equation*}%
Using the appropriate Fourier symbols (let $k\in 
%TCIMACRO{\U{2124} }%
%BeginExpansion
\mathbb{Z}
%EndExpansion
$ denote the frequency domain variable, as in Section \ref{subsec:linearization}), and setting $\det \mathcal{L} = 0$, we obtain the linear wave speed%
\begin{equation*}
c_{0} =\pm \sqrt{\frac{S\left\vert k\right\vert ^{3}}{2}+\frac{S\tau
_{1}\left\vert k\right\vert }{2}+\frac{A}{\left\vert k\right\vert }}.
\end{equation*}

\begin{remark}
Note that $\mathcal{L}$ here denotes the linearization of (\ref{eq:gammaEqBeforeSigmaTauDivision}), (\ref{eq:thetaEq1}), while $L\left(\mu\right)$ (presented in Section \ref{subsec:linearization}) is the linearization of the ``identity plus compact" reformulation $\left( \theta -\Theta \left( \theta ,\gamma _{1};\mu \right) ,\gamma_{1}-\Gamma \left( \theta ,\gamma _{1};\mu \right) \right) = (0,0)$ of these equations. However, as expected, the above expression for the linear wave speed $c_0$ coincides with our earlier presented $c_\pm \left(k; \tau_1 \right)$ (see (\ref{c_pm})) in the case $M = 2 \pi$, $\overline{\gamma} = 0$.
\end{remark}

Since we are interested in the $(1,2)$ case of Wilton ripples, we assume that the leading order terms of $\left(\theta,\gamma\right)$ are the following linear combination of eigenfunctions%
\begin{equation*}
\begin{bmatrix}
\theta _{1} \\ 
\gamma _{1}%
\end{bmatrix}%
=%
\begin{bmatrix}
i \\ 
2c_{0}%
\end{bmatrix}%
\exp \left( i\alpha \right) +%
\begin{bmatrix}
-i \\ 
2c_{0}%
\end{bmatrix}%
\exp \left( -i\alpha \right) + t_2 \left( 
\begin{bmatrix}
i \\ 
2c_{0}%
\end{bmatrix}%
\exp \left( 2i\alpha \right) +%
\begin{bmatrix}
-i \\ 
2c_{0}%
\end{bmatrix}%
\exp \left( -2i\alpha \right) \right),
\end{equation*}
where $t_2 \in \mathbb{C}$ is yet to be determined.  

\subsection{Second-order equations}
The $O\left(\varepsilon ^{2}\right) $ terms of (\ref{eq:approx1}), (\ref{eq:approx2}) yield the equations
\begin{eqnarray*}
&&-S\left( \partial _{\alpha }^{4}\theta _{2}-\tau _{1}\partial _{\alpha
}^{2}\theta _{2}\right) -2A\theta _{2}+c_{0}\partial _{\alpha }\gamma _{2} \\
&=&-\widetilde{A}\partial _{\alpha }\left( \theta _{1}^{2}\right)
-c_{1}\partial _{\alpha }\gamma _{1} \\
&&+A\left( \frac{\partial _{\alpha }\left(
\gamma _{1}^{2}\right) }{4}-c_{0}\partial _{\alpha }\left\{ c_{0}\theta
_{1}^{2}+\theta _{1}H\gamma _{1}+\partial _{\alpha }^{-1}\theta _{1}\partial
_{\alpha }H\gamma _{1}-\partial _{\alpha }H\left( \gamma _{1}\partial
_{\alpha }^{-1}\theta _{1}\right) \right\} \right)
\end{eqnarray*}%
\newline
and%
\begin{equation*}
c_{0}\theta _{2}+\frac{1}{2}H\gamma _{2}=-c_{1}\theta _{1},
\end{equation*}which are of the form%
\begin{equation*}
\mathcal{L}%
\begin{bmatrix}
\theta _{2} \\ 
\gamma _{2}%
\end{bmatrix}
=
\begin{bmatrix}
\text{RHS1} \\ 
\text{RHS2}%
\end{bmatrix},
\end{equation*}
where
\begin{eqnarray*}
\text{RHS1} &:=& -\widetilde{A}\partial _{\alpha }\left( \theta _{1}^{2}\right)
-c_{1}\partial _{\alpha }\gamma _{1} \\
&&+A\left( \frac{\partial _{\alpha }\left(
\gamma _{1}^{2}\right) }{4}-c_{0}\partial _{\alpha }\left\{ c_{0}\theta
_{1}^{2}+\theta _{1}H\gamma _{1}+\partial _{\alpha }^{-1}\theta _{1}\partial
_{\alpha }H\gamma _{1}-\partial _{\alpha }H\left( \gamma _{1}\partial
_{\alpha }^{-1}\theta _{1}\right) \right\} \right), \\
\text{RHS2} &:=& -c_{1}\theta _{1}.
\end{eqnarray*}

By the Fredholm alternative, we need%
\begin{equation*}
\left\langle v,%
\begin{bmatrix}
\text{RHS1} \\ 
\text{RHS2}%
\end{bmatrix}%
\right\rangle =0
\end{equation*}%
for all $v\in \func{Ker} \mathcal{L}^{\ast }$, where $\mathcal{L}^{\ast }$ denotes the Hermitian adjoint of $\mathcal{L}$. By explicitly calculating this inner product against each of the two basis elements of $\func{Ker} \mathcal{L}^{\ast }$, we obtain the equations 
\begin{eqnarray*}
-2\widetilde{A}t_2 +2A t_2  c_{0}^{2}-4c_{0}c_{1} &=&0, \\
2\widetilde{A}+4Ac_{0}^{2}-8t_2 c_{0}c_{1} &=&0.
\end{eqnarray*}%
Solving this system for $c_1$ and $t_2$, we obtain%
\begin{equation}\label{WiltonC1}
c_{1}=\pm \frac{\sqrt{\left( Ac_{0}^{2}-\widetilde{A}\right) \left(
2Ac_{0}^{2}+\widetilde{A}\right) }}{2\sqrt{2}c_{0}}
\end{equation}
and
\begin{equation*}
t_2  = \pm \sqrt{\frac{2Ac_{0}^{2}+\widetilde{A}}{2Ac_{0}^{2}-2\widetilde{A}}},
\end{equation*}
where the sign of $t_2$ is determined by that of $c_1$.  Note that by our expression for $c_{1}$, we require the stipulation that 
\begin{equation*}
\left(
Ac_{0}^{2}-\widetilde{A}\right) \left( 2Ac_{0}^{2}+\widetilde{A}\right) \geq
0.
\end{equation*}

In the next section, we compute examples of branches of traveling waves where the kernel of the linearization is two dimensional in both the resonant Wilton ripple ($l \mathbin{/} k \in \mathbb{N}$) and non-resonant Stokes' wave ($l \mathbin{/} k \notin \mathbb{N}$) cases.

\section{Numerical methods and results}\label{sec:numerical}

%%%%%%%%%%%%%%
\begin{figure}[tp]
\centerline{\includegraphics[width=0.5\textwidth]{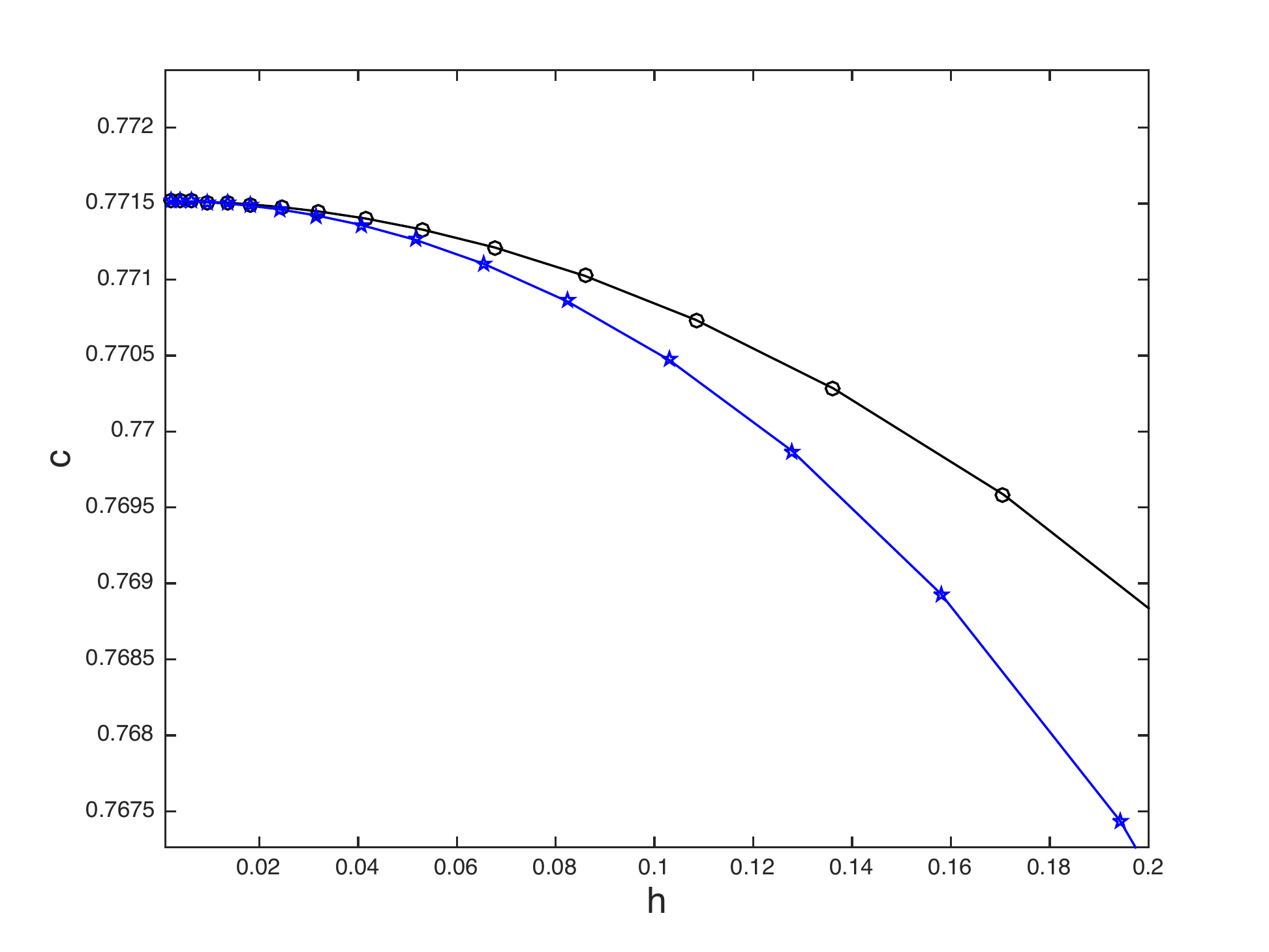}\includegraphics[width=0.5\textwidth]{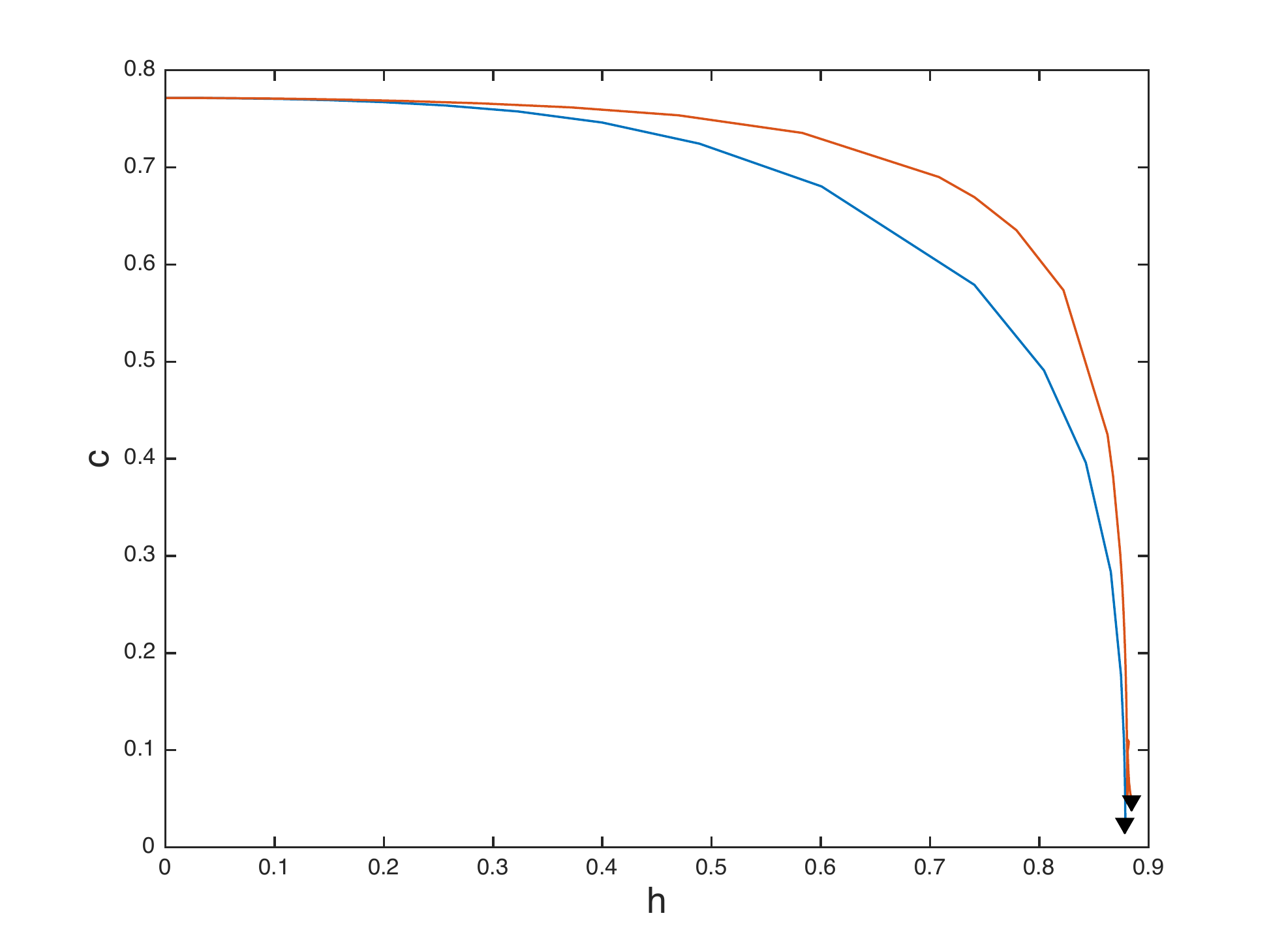}}
\centerline{\includegraphics[width=0.5\textwidth]{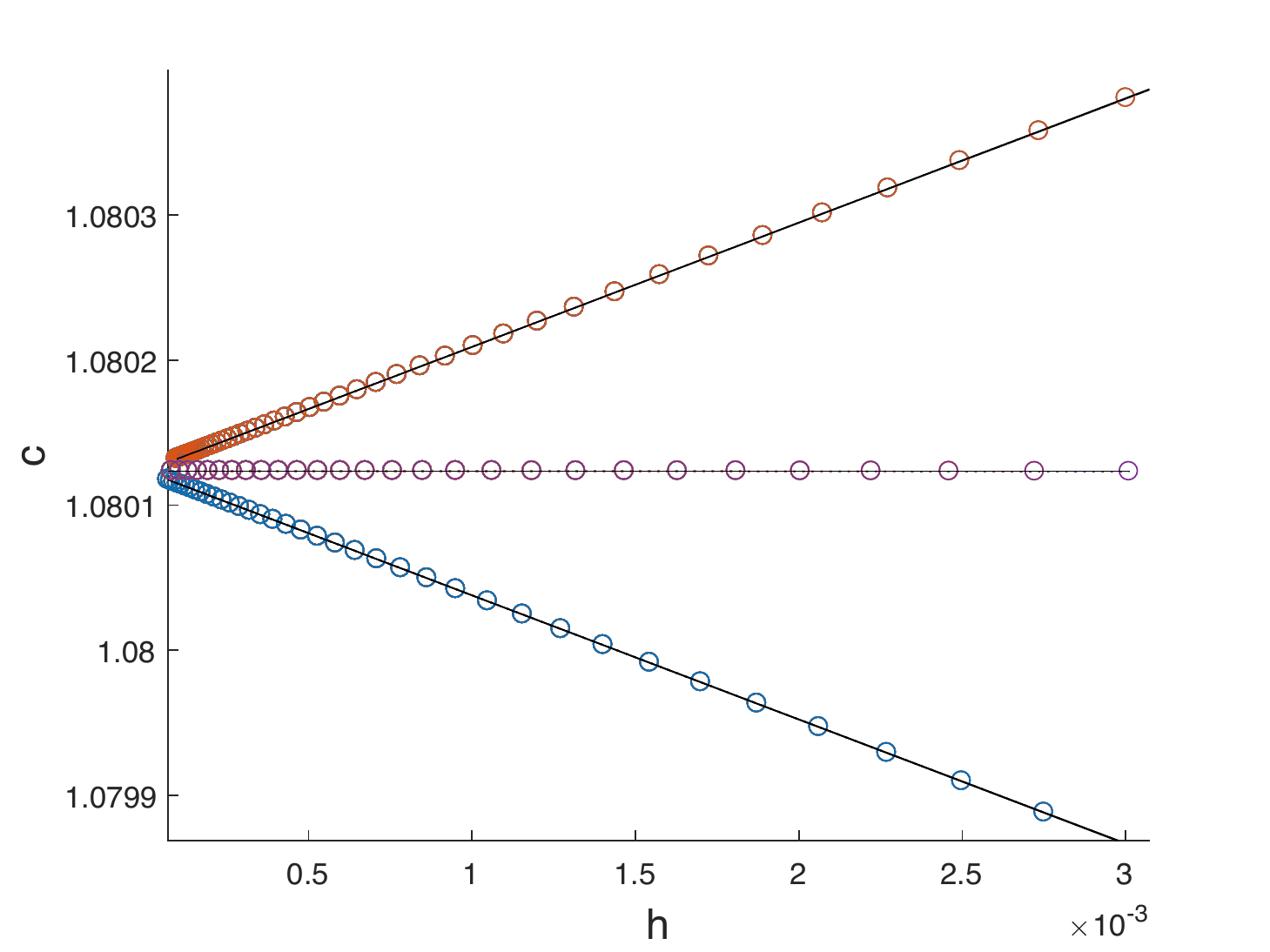}\includegraphics[width=0.5\textwidth]{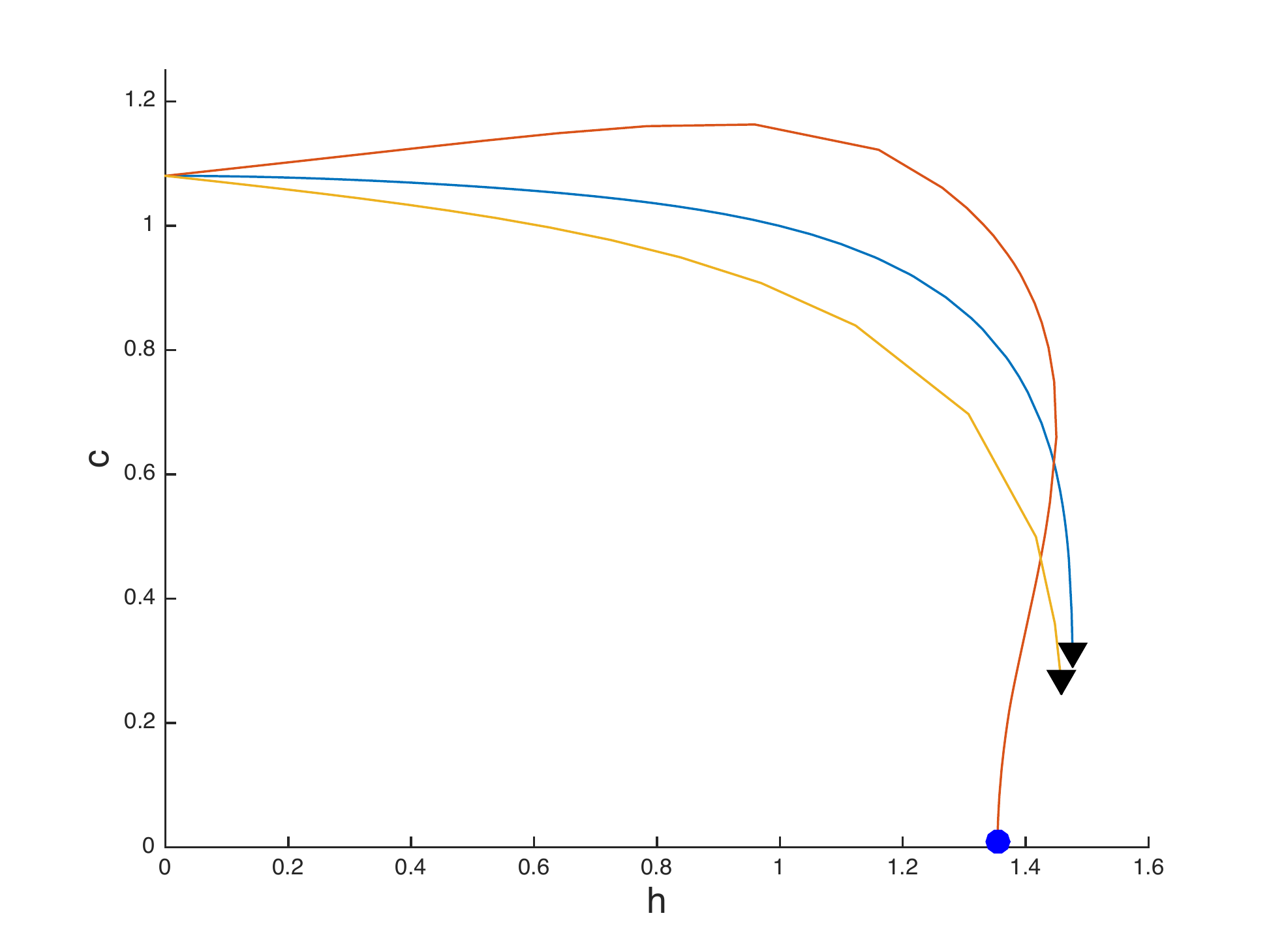}}
\caption{\it Example speed-amplitude curves of non-resonant (top row) and resonant (bottom row) branches of traveling waves.  The non-resonant configuration is at $S=1/63, \tilde{A}=1/10, A=1, \tau=2$; the resonant configuration is at $S=1/9, \tilde{A}=1/5, A=1, \tau=2$.  The left column is a close-up of small amplitude behavior; the right column portrays the global branches.  In the bottom left panel the small amplitude computations, open circles, are compared to the asymptotic predictions for Wilton ripples, solid lines.  In the right column, the extreme configurations are noted by solid markers:  triangles correspond to self-intersecting profiles (bottom left and bottom center panels in Figure \ref{Profiles} and the bottom row of Figure \ref{ProfilesNonRes}) while
the circle marks a static wave profile (the bottom right panel in Figure \ref{Profiles}).  \label{Branches}}
\end{figure}
%%%%%%%%%%%%%%%

Our numerical methods are essentially the same as those in \cite{AkersAmbroseSulonPeriodicHydroelasticWaves}, which themselves are similar to those in \cite{AkersAmbroseWrightTravelingWavesSurfaceTension}. For our computations, we continue to work with the version (\ref{eq:gammaEqBeforeSigmaTauDivision}) and (\ref{eq:thetaEq1}) of our equations. The horizontal domain has width $M = 2 \pi$ ($\alpha \in \left[- \pi, \pi\right]$), and
the functions $\theta, \gamma$ are projected  onto a finite-dimensional Fourier basis:
\begin{equation*}
\theta \left( \alpha \right) =\sum\limits_{k=-N}^{k=N}a_{k}\exp \left(
ik\alpha \right) ,\text{ \ \ \ \ \ }\gamma \left( \alpha \right)
=\sum\limits_{k=-N}^{k=N}b_{k}\exp \left( ik\alpha \right) .
\end{equation*}%
By symmetry considerations (i.e. $\theta$ is odd and $\gamma$ is even), both $a_{-k} = -a_k$ (hence $a_0 = 0$) and $b_{-k} = b_k$ (with $b_0 = \overline{\gamma}$) and thus the dimension of the system is somewhat reduced. The mean shear $\overline{\gamma}$, is specified in advance, so computing a traveling wave requires determining $2N + 1$ values: $a_{1},\dots ,a_{N},b_{1},\dots ,b_{N},$ and the wave-speed $c$.  Projecting (\ref{eq:gammaEqBeforeSigmaTauDivision}) and (\ref{eq:thetaEq1}) into Fourier space yields a system of $2N$ algebraic equations; then, to complete the system, we append another equation which specifies some measure of the solution's amplitude \cite{AkersAmbroseSulonPeriodicHydroelasticWaves}.

%%%%%%%%%%%%%%%%%%%%%%%%
\begin{figure}[tp]
\centerline{\includegraphics[width=0.5\textwidth]{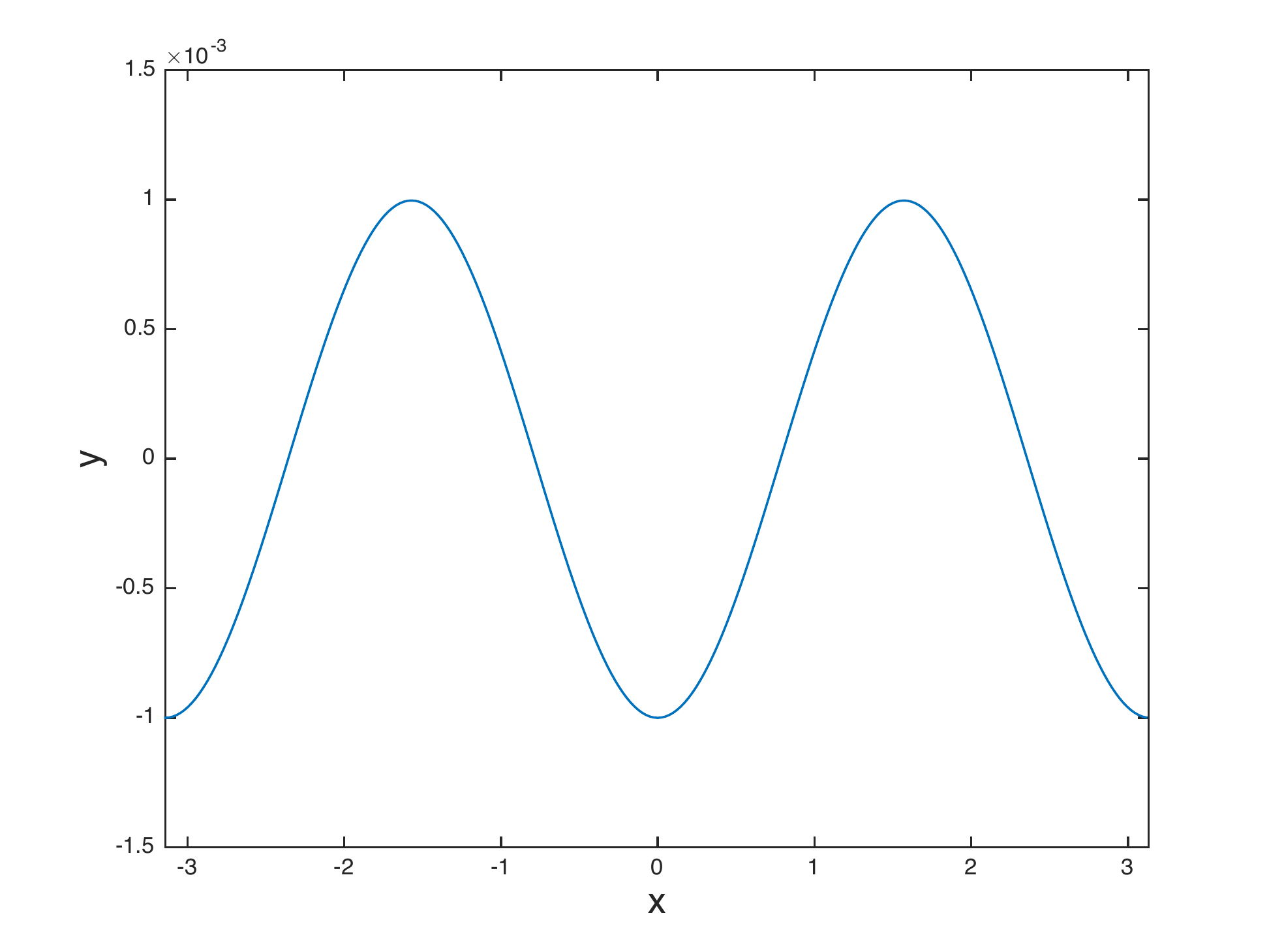}\includegraphics[width=0.5\textwidth]{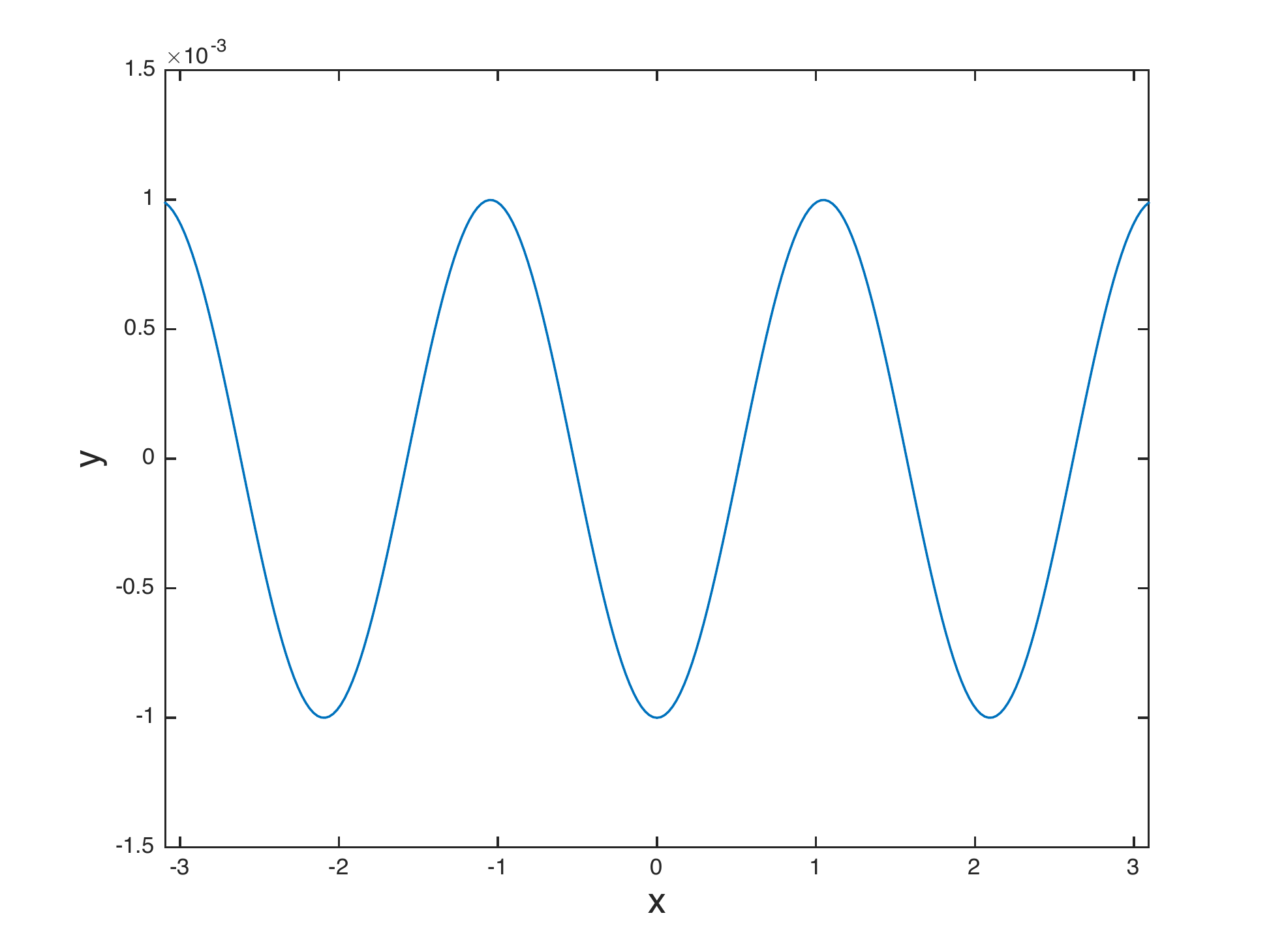}}
\centerline{\includegraphics[width=0.5\textwidth]{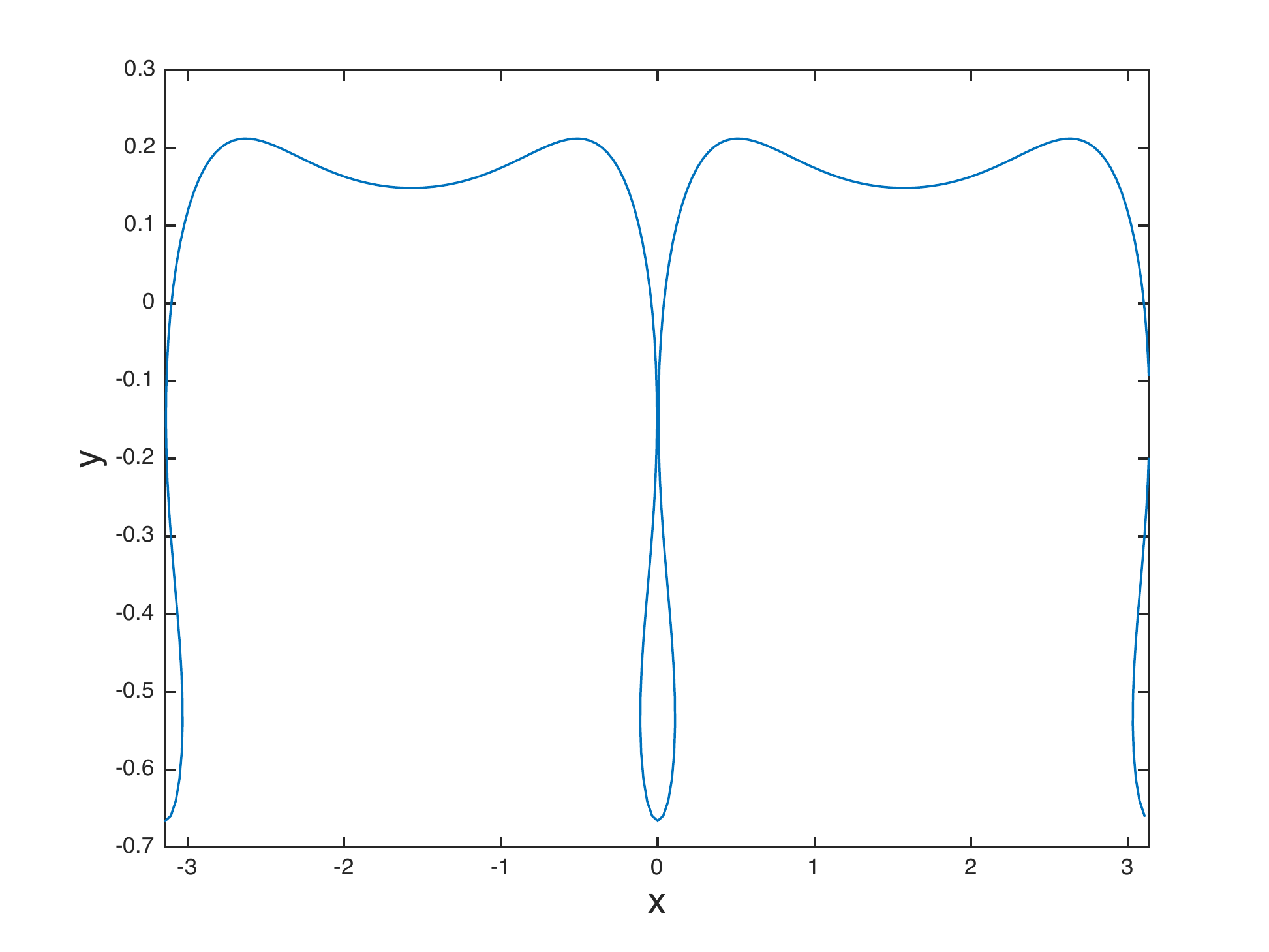}\includegraphics[width=0.5\textwidth]{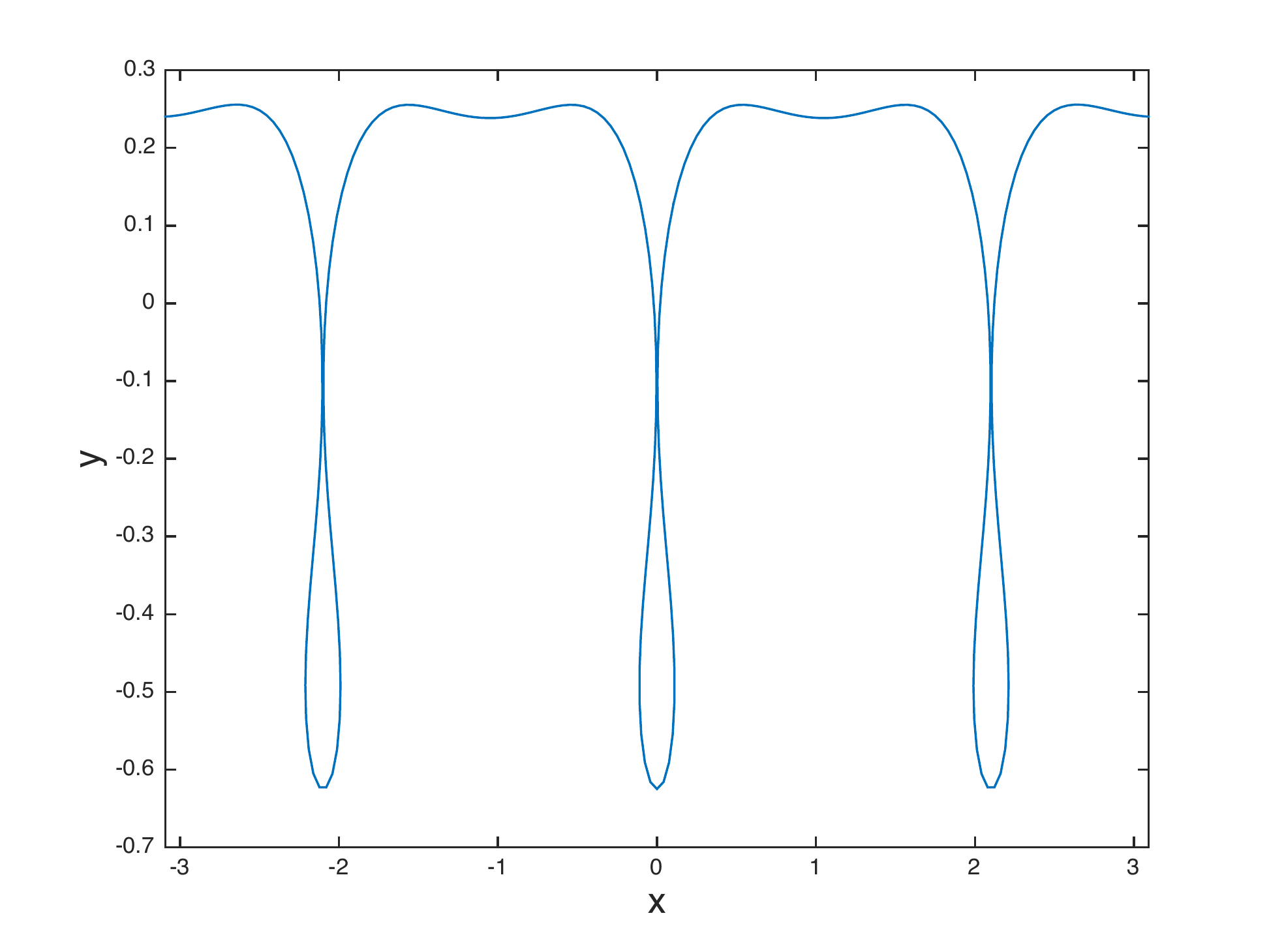}}
\caption{\it Examples of wave profiles from two branches of traveling waves bifurcating from the same speed, at $S=1/63, \  \tilde{A}=1/10, \  A=1, \ \tau=2$. The top row are representative of the wave profiles at small amplitude.  The bottom row are the globally extreme waves from each branch.  \label{ProfilesNonRes}}
\end{figure}

%%%%%%%%%%%%%%%%%%%%%%%%
\begin{figure}[tp]
\centerline{\includegraphics[width=0.33\textwidth]{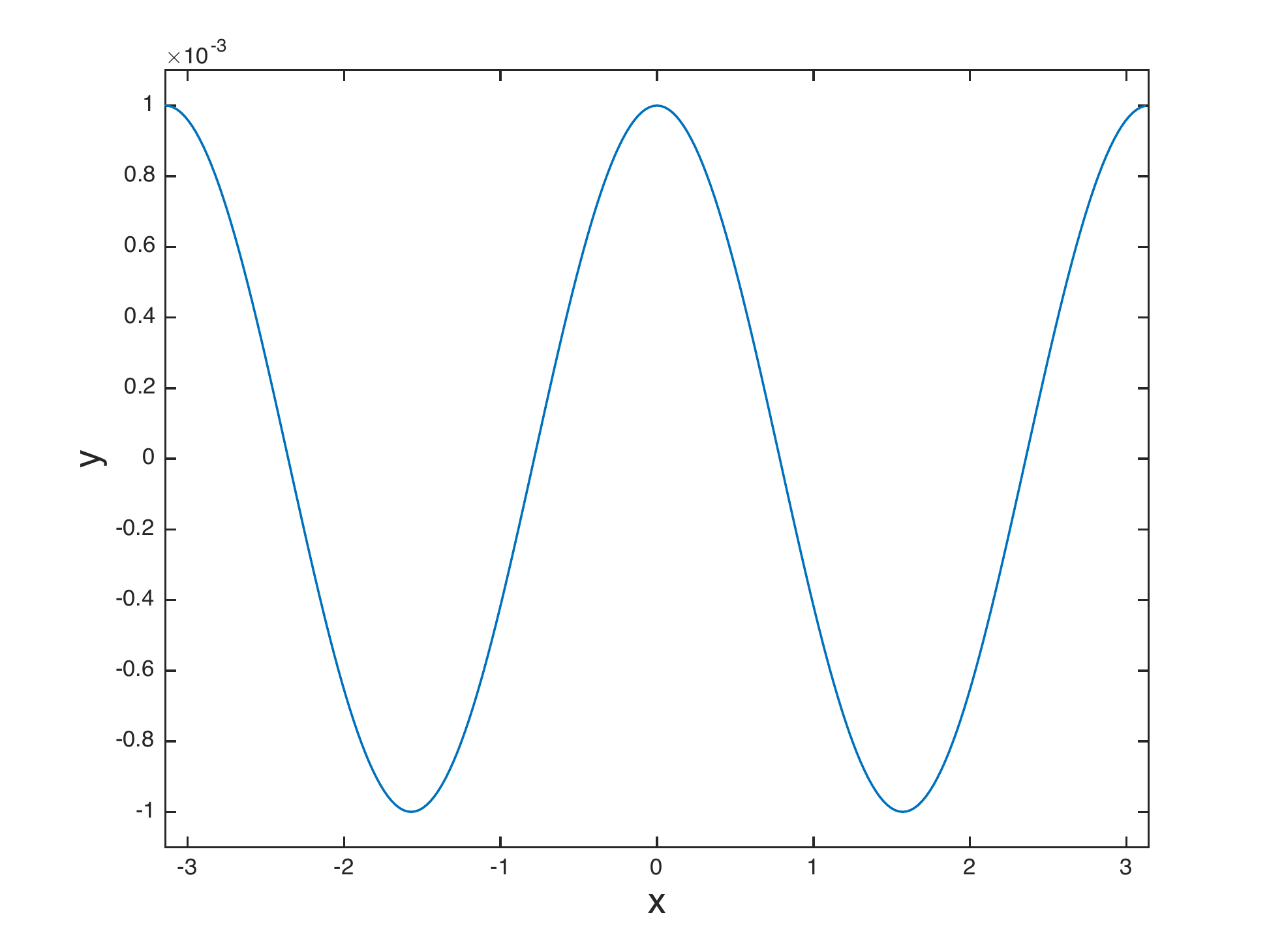}\includegraphics[width=0.33\textwidth]{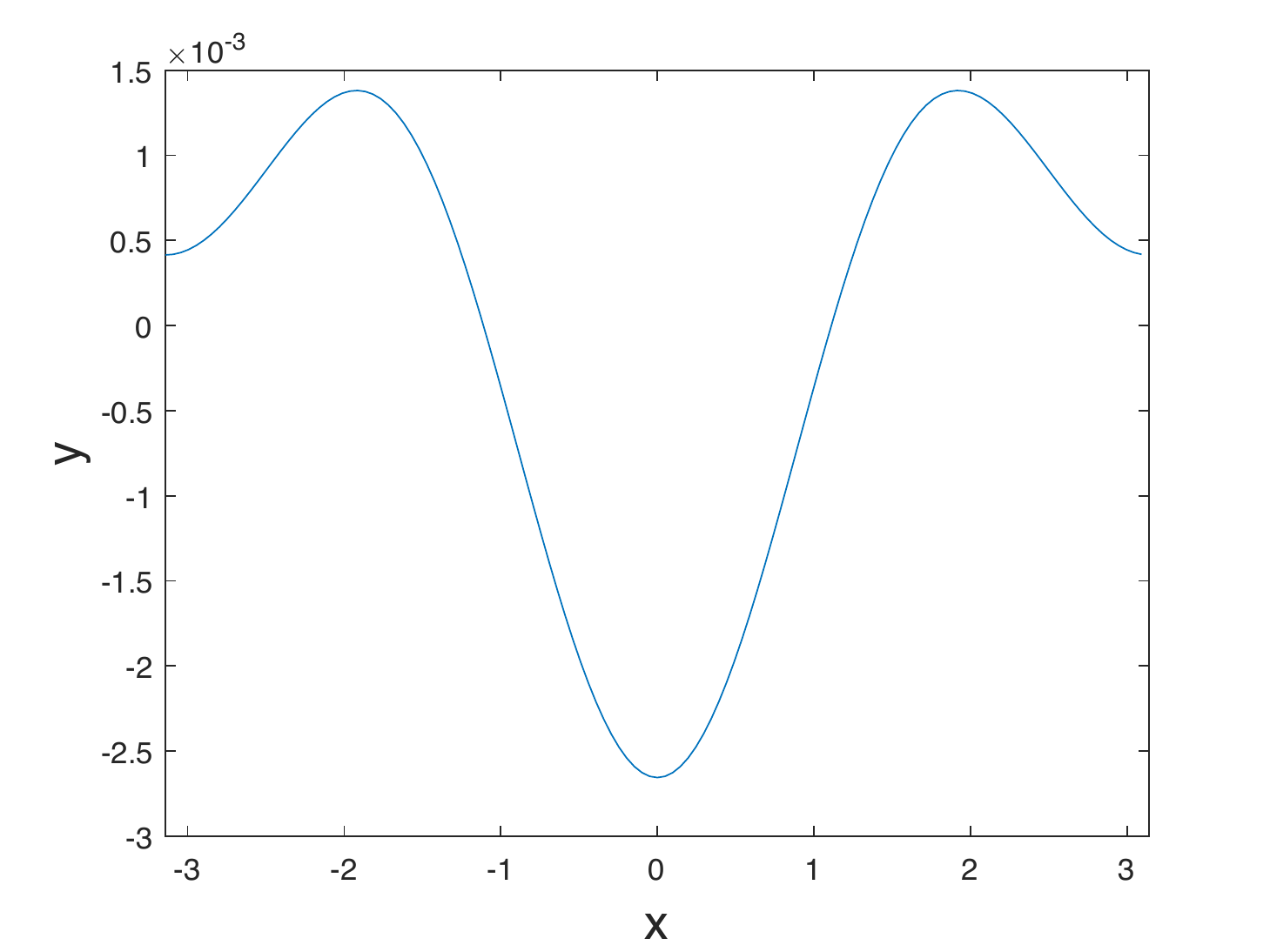}\includegraphics[width=0.33\textwidth]{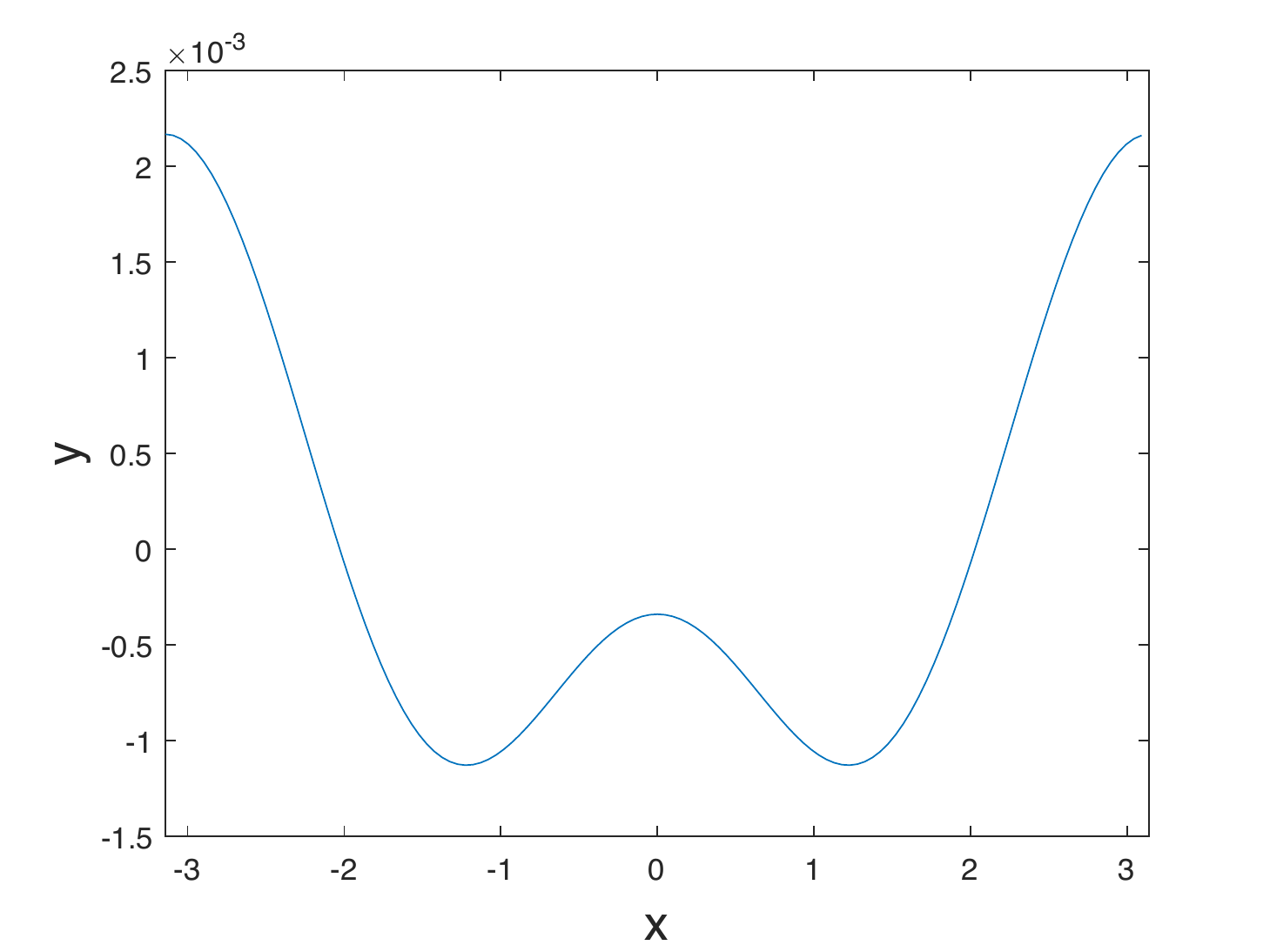}}
\centerline{\includegraphics[width=0.33\textwidth]{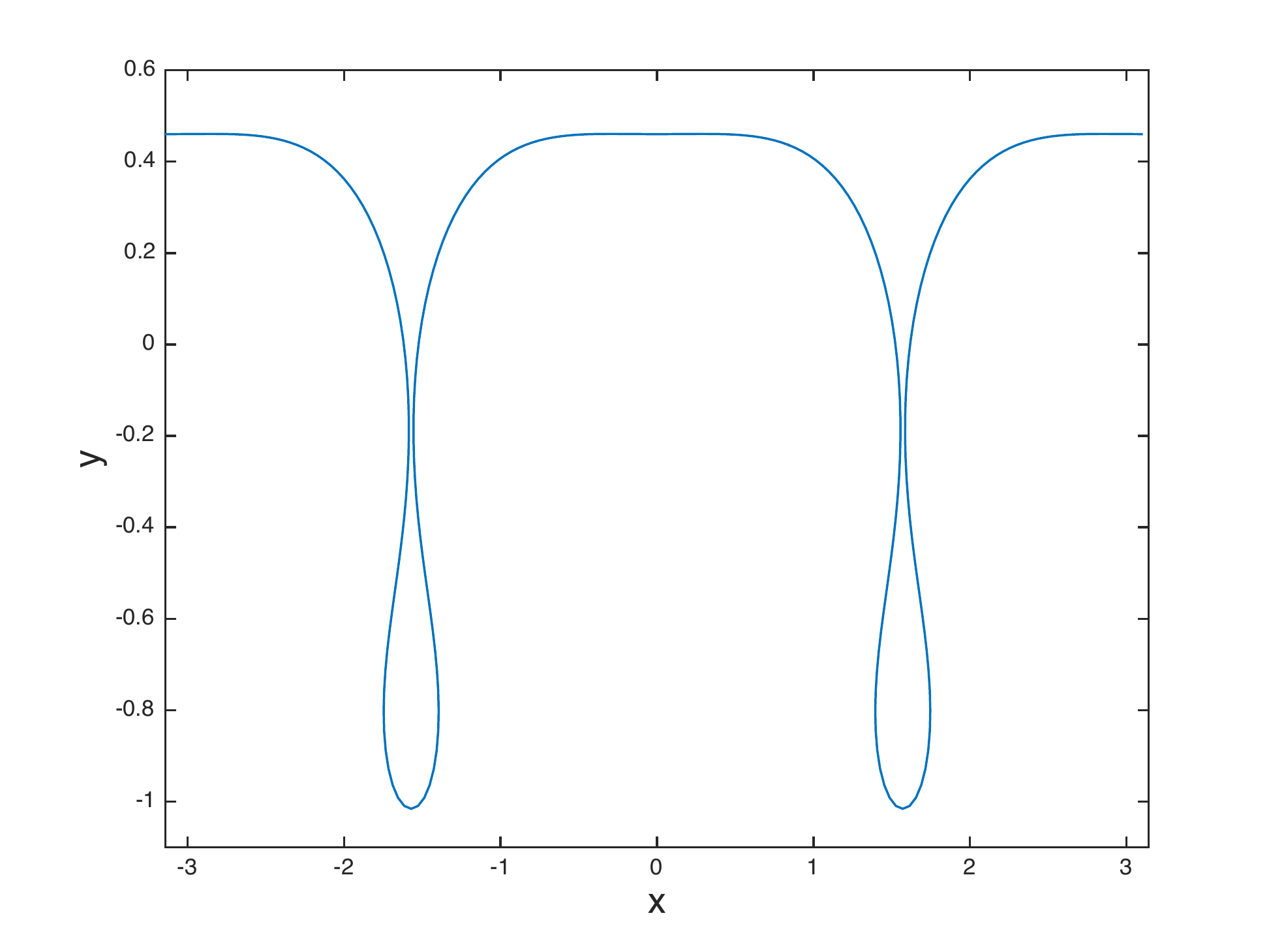}\includegraphics[width=0.33\textwidth]{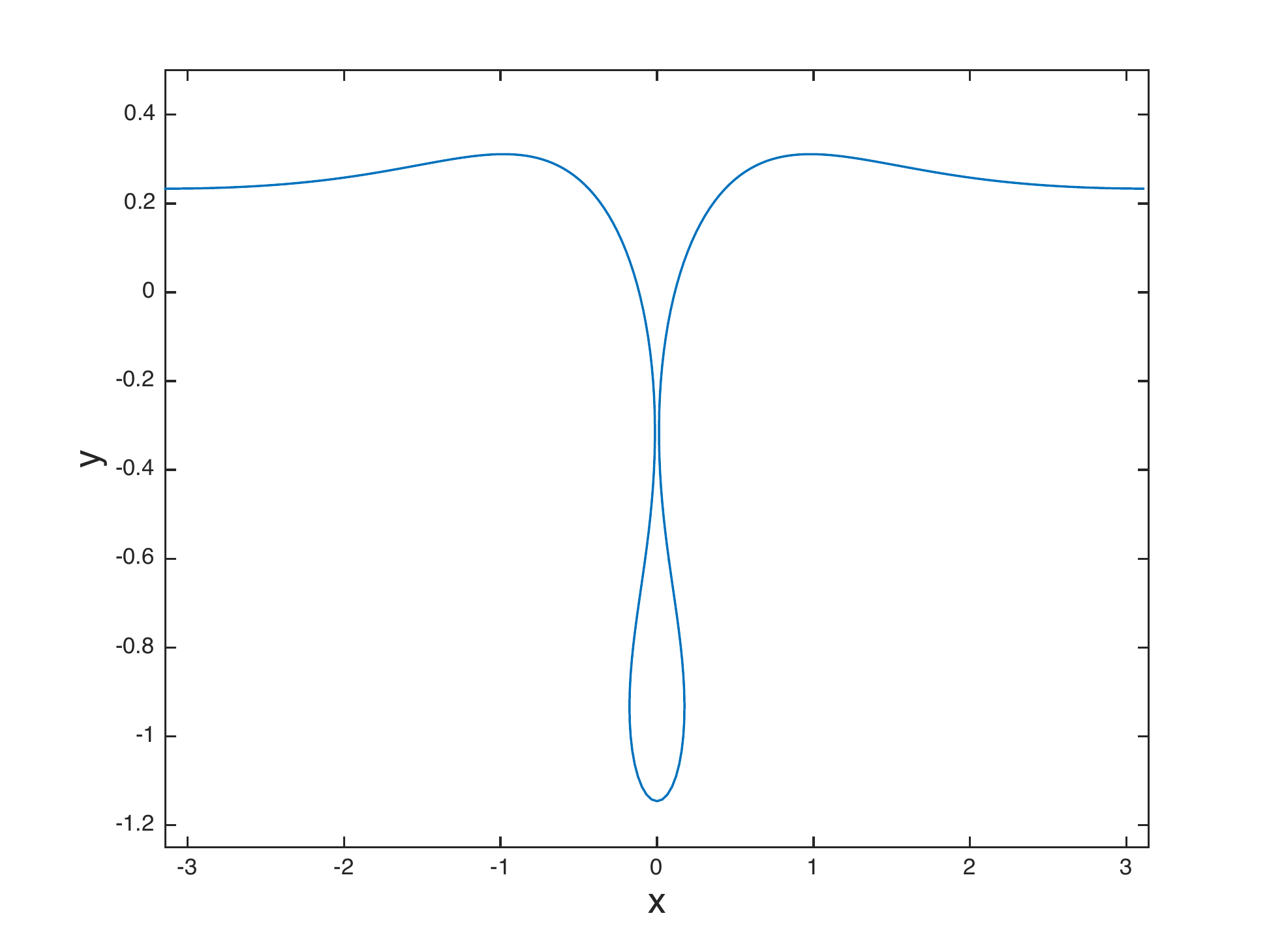}\includegraphics[width=0.33\textwidth]{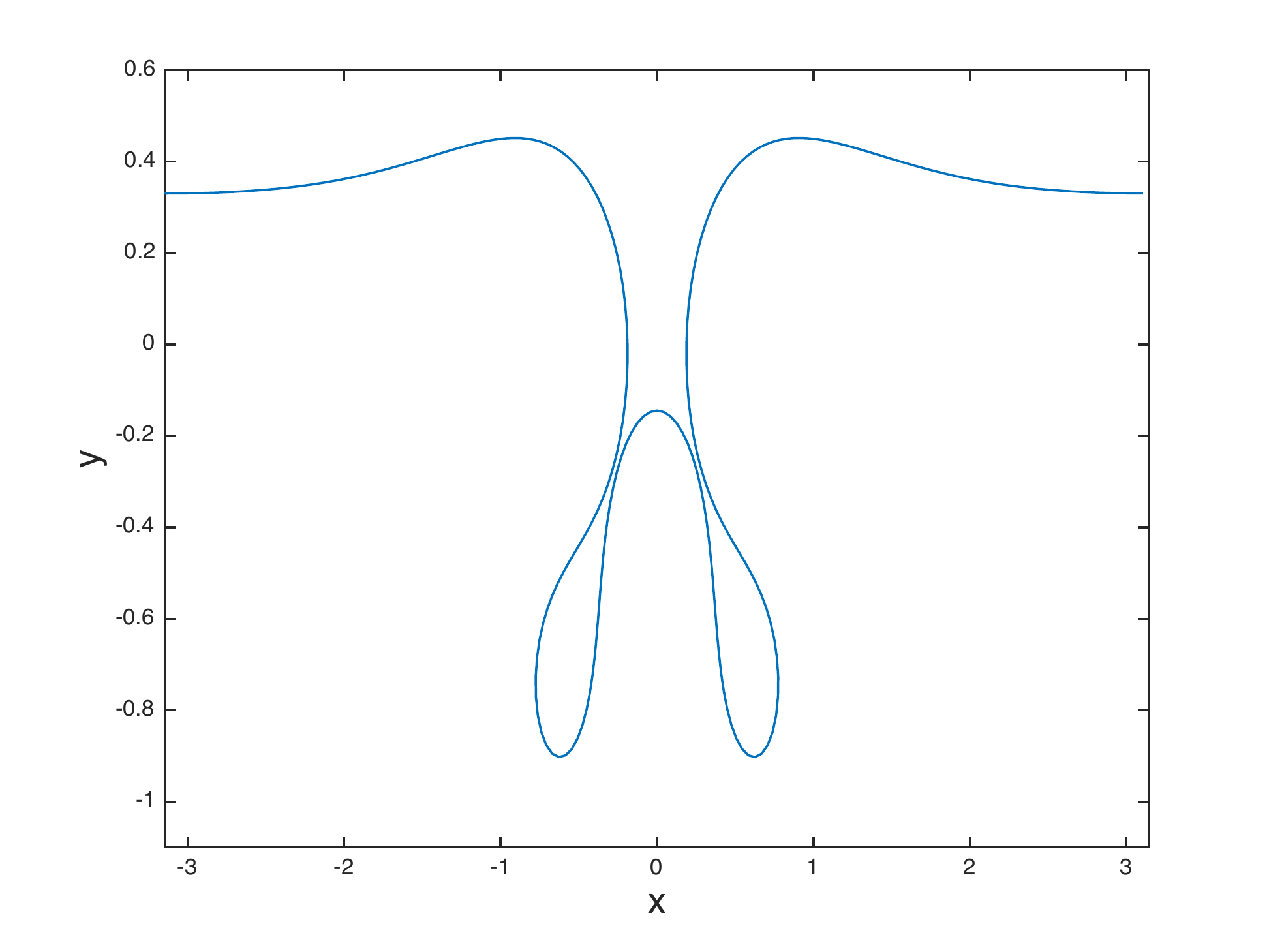}}
\caption{\it Examples of wave profiles from three branches of traveling waves bifurcating from the same speed, at $S=1/9, \  \tilde{A}=1/5, \  A=1, \ \tau=2$. The top row are representative of the wave profiles at small amplitude.  The bottom row are the globally extreme waves from each branch.  The bottom left and center panels are numerical computations of traveling waves which self-intersect; the bottom right panel is a static wave which does not self intersect.  \label{Profiles}}
\end{figure}
%%%%%%%%%%%%%%%%%%%%%%

% Birkhoff Rott
Most components of (\ref{eq:gammaEqBeforeSigmaTauDivision}), (\ref{eq:thetaEq1}) are trivial to compute in Fourier space, perhaps with the exception of the Birkhoff-Rott integral $W^\ast$. \ To compute $W^\ast$, we use the decomposition (\ref{eq:BirkRottDecomp}), i.e.
\begin{equation*}
W^{\ast }=\frac{1}{2 i}H\left( \frac{\gamma }{z_{\alpha }}\right) +\mathcal{K} \left[ z%
\right] \gamma \text{,}
\end{equation*}%
where $H$ again denotes the Hilbert transform and the remainder $\mathcal{K}\left[ z%
\right] \gamma $ is the integral%
\begin{eqnarray*}
&&\mathcal{K}\left[ z\right] \gamma \left( \alpha \right) =  \\
&&\frac{1}{4\pi i}\func{PV}\int\nolimits_{0}^{2\pi }\gamma \left( \alpha
^{\prime }\right) \left[ \cot \left( \frac{1}{2}\left( z\left( \alpha
\right) -z\left( a^{\prime }\right) \right) \right) -\frac{1}{\partial
_{\alpha ^{\prime }}z\left( \alpha ^{\prime }\right) }\cot \left( \frac{1}{2}%
\left( \alpha -\alpha ^{\prime }\right) \right) \right] d\alpha ^{\prime }.
\end{eqnarray*}%
The Hilbert transform is trivial to compute; its Fourier symbol is simply $-i \func{sgn} \left( k\right)$. \ To compute the integral $\mathcal{K}\left[ z%
\right] \gamma $, we use an ``alternating" version of the trapezoid rule (i.e. to evaluate the integral at an ``odd" grid point, we sum over ``even" nodes, and vice-versa) \cite{AkersAmbroseSulonPeriodicHydroelasticWaves}.

% Solver
The $2N + 1$ algebraic equations are then numerically solved with the quasi-Newton method due to Broyden \cite{Broyden}.  For all branches an initial guess is required.  We choose initial guesses based on the nature of the small amplitude asymptotics of the traveling waves.   For Stokes' waves, a sinusoid with the correct phase speed is sufficient.  For Wilton ripples, an initial guess is made using the asymptotics discussed in Section \ref{sec:wiltonRipples}, using the sign of the speed correction, $c_1$ from equation \eqref{WiltonC1}, to choose branches.    

% Resonances and wave types
Wilton ripples, where the infinitesimal waves on the branch are supported at two wave numbers, exist only when both the linearization has a 
two-dimensional kernel and the wave numbers in this kernel satisfy $l \mathbin{/} k \in \mathbb{N}$.   Stokes waves can exist regardless of either of these conditions, and in fact we compute Stokes waves in both the resonant and non-resonant situation.   In other words, we observe only Stokes waves in the non-resonant case (when the linearization has one- or two-dimensional kernel) and both Stokes waves and Wilton ripples in the resonant case, where the Stokes wave is supported at the higher wavenumber $l$.  Stokes wave computations in the non-resonant case are computed as illustrations of the 
existence theorem.  Stokes waves can also bifurcate from the same linear speed as a pair of Wilton ripples, however this situation is not governed by our theorem.  

% Frequency and parameter choices
We chose two example configurations for which 
to present computations.  All computations took $\tau=2, A=1$. We then chose the value of $S$ to get the 
two-dimensional kernel for some speed.  When $S=1/9$, the wave numbers $(k,l)=(1,2)$ travel at the same speed, a resonant case. The bifurcation structure here includes two Wilton ripples supported at both wave numbers at leading order, and one Stokes wave supported only at even wave numbers.  Generally if $l$ is the larger of the two frequencies involved in the resonant wave, there is both a pair of Wilton ripples and a Stokes wave supported at frequencies $nl$, \ $n\in \mathbb{N}$. We also simulate at $S=1/63$, where two wavenumbers $(k,l)=(2,3)$ travel at the same speed;  the bifurcation structure here is made up of two Stokes waves.  This is the ``non-resonant" situation of the theorem.

% Continuation
Branches of traveling waves are computed via continuation.  For small amplitude waves, total displacement $h=\max(y)-\min(y)$, is used as a continuation parameter.  As amplitude increases turning points in displacement are circumvented by switching continuation parameters, to amplitude of a Fourier mode of the solution.  The procedure is automated, increasing the wave number used for continuation each time evidence of a turning point in that parameter, or step size below a tolerance, is observed.  Since the amplitude of a fixed Fourier harmonic need not be increasing along a branch, we track the direction the amplitude of the harmonic is changing to prevent retracing previous computations.  For the Wilton ripples, we use consecutive harmonics. The Stokes waves computed here are not supported at all wavenumbers; wavenumbers with zero support are skipped.  

We denote a wave as \emph{globally extreme} when the continuation method has reached, to numerical precision, the termination criterion of self-intersection.  Note that such globally extreme waves represent the end of the branch for fixed parameter values, and not necessarily the furthest solution from the flat state (in some norm).  Alternatively, some of our computed branches terminate in a static wave (i.e. a solution with $c=0$), which can also be considered a return to trivial, since static waves occur at locations where branches with positive and negative speeds collide.

%%%%%%%%%%%%%%%%%
\begin{figure}[tp]
\centerline{\includegraphics[width=0.5\textwidth]{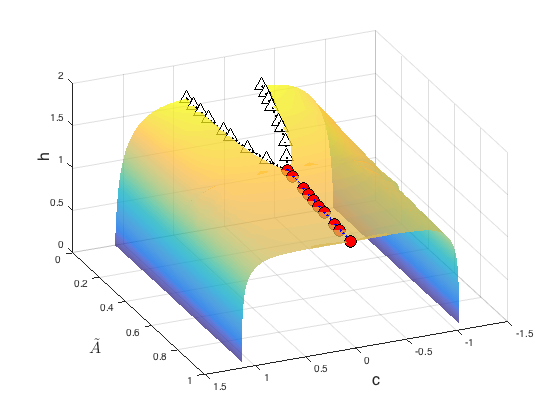}\includegraphics[width=0.5\textwidth]{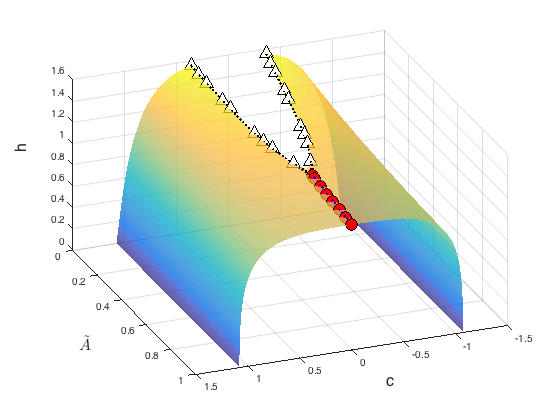}}
\caption{\it Bifurcation surfaces, collections of global branches of traveling waves, are depicted in speed $c$, mass $\tilde{A}$, and total displacement $h=\max(y)-\min(y)$.  Both surfaces have $\tau=2,\  A=1, \text{and} \ S=1/9$.  The surface in the left panel is composed of Stokes' waves, the left panel of Figure \ref{Profiles}.  The right panel is composed of Wilton ripples, the branch in the center of Figure \ref{Profiles}.  The filled red circles mark when branches of traveling waves terminate in static waves; the empty white triangles mark when branches terminate in self intersection.  \label{Surfaces}}
\end{figure}
%%%%%%%%%%%%%%%%%

%%% Resonances
The numerical computations presented consider Wilton ripple resonances, where the null space of the linear operator has dimension two, including both a wave number and its harmonic, $k=1$ and $k=n$.  For water waves this happens at a countable collection of Bond numbers.  For the hydroelastic case there is a four parameter family, satisfying
\begin{equation}\label{RippleS} S=\frac{2A}{n(n^2+n+1+\tau)} \end{equation}
In \eqref{RippleS}, $A$, $\tau$ and $S$ are real valued, and positive, $n\in \mathbb{N}$.  Thus for any pair $A$ and $\tau$, there is a countable number of such resonances.  Notice in \eqref{RippleS} that the parameter $\tilde{A}$ does not appear, thus these resonances occur independent of the mass $\tilde{A}$.  As discussed in earlier sections, we focus here on the second harmonic resonance, where the wave numbers of the infinitesimal solution are $k=1$ and $k=2$ (other configurations exist, see \cite{akers2012wilton,trichtchenko2016instability,vanden2002wilton,mcgoldrick1970wilton,akers2016high,OlgaPreprint}).  For each class, the numerical method begins with an initial guess. 

%%%%%%%%%%%%%%%%%
\begin{figure}[tp]
\centerline{\includegraphics[width=0.5\textwidth]{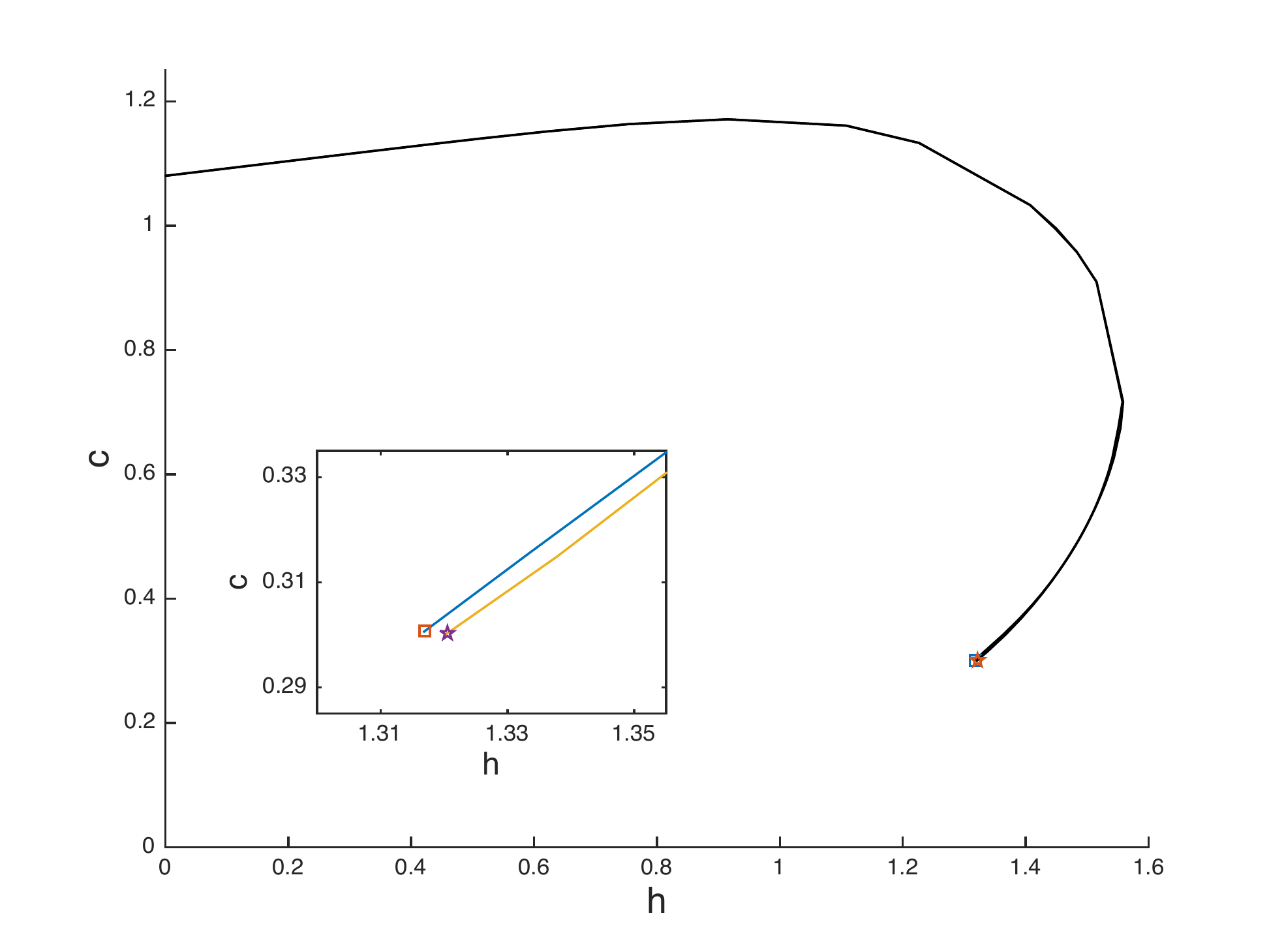}\includegraphics[width=0.5\textwidth]{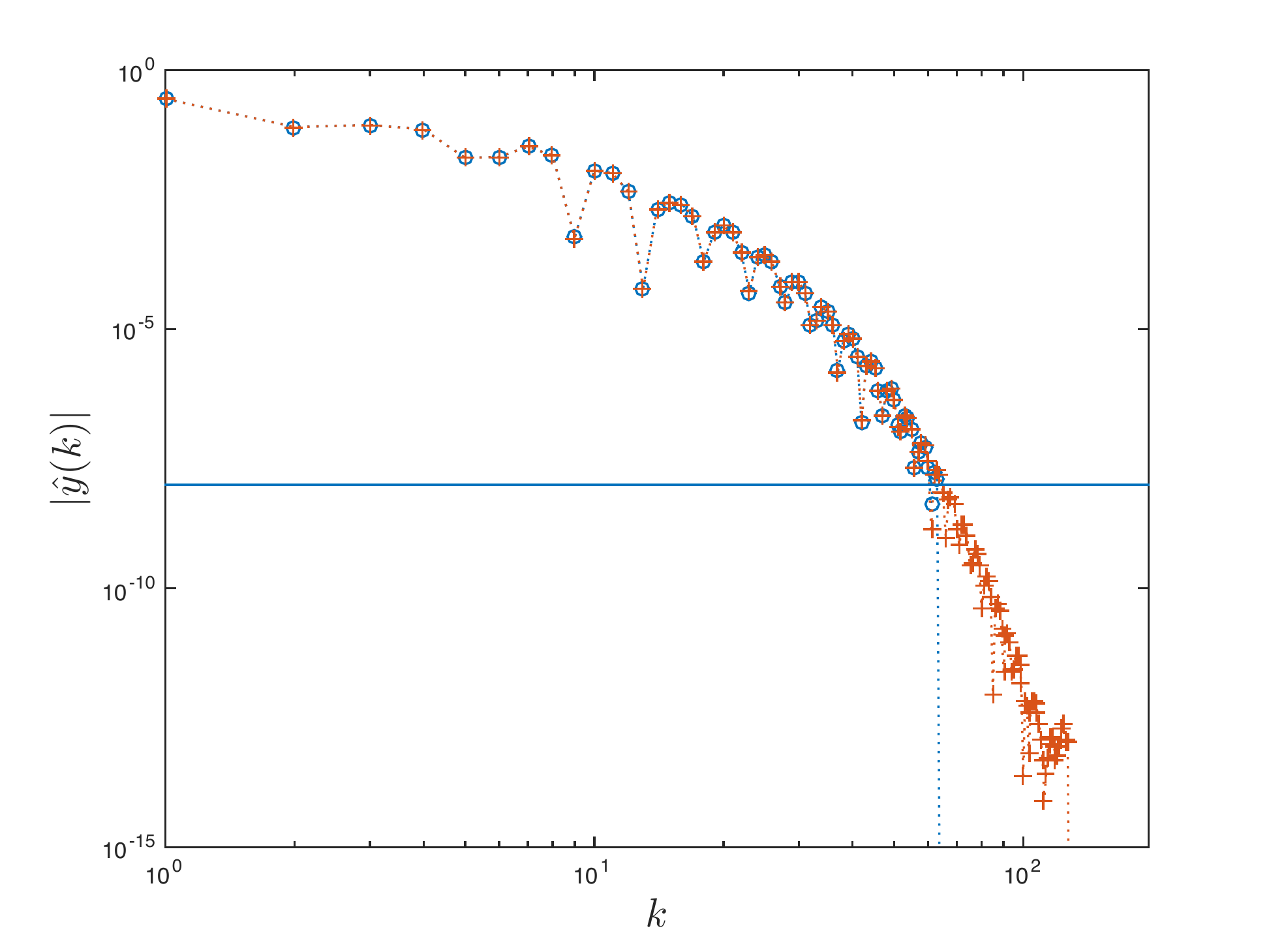}}
\caption{\it The convergence data for a sample configuration, at $S=1/9, \tilde{A}=1/100, A=1, \tau=2$ is depicted.  On the left, computed speed-amplitude curves at two different spatial resolutions are compared ($N=128$ to $N=256$).  At the level reported in Figure \ref{Branches}, these curves agree; closer inspection reveals that by the extreme configuration, inset, the curves differ on the order of $10^{-3}$.    On the right, the Fourier modes of the extreme wave are depicted.  With $N=128$, marked with circles, the highest frequency Fourier modes have decayed almost to the error tolerance of the quasi-Newton iteration, marked with a solid line.  When $N=256$, marked with plus signs,  the included Fourier modes decay to near machine precision.  \label{Convergence}}
\end{figure}
%%%%%%%%%%%%%%%%%%%%%

% Wilton Ripple Case
In this work, we present example simulations for $A=1,\ \tau=2$, $n=2$ and $S=1/9$.  In this configuration, three branches of traveling waves bifurcate from the flat state, two Wilton ripples, whose asymptotics are in the previous section, and one Stokes wave, whose small  amplitude solutions are supported only at $k=2$.  Example speed-amplitude curves and wave profiles on each branch are presented in Figures \ref{Branches} and \ref{Profiles} respectively.   We observe that both Wilton ripples and Stokes waves have configurations which terminate in self intersection as well as in static waves.

% Surfaces
In addition to computing individual branches of traveling waves, we consider the dependence of these branches on the mass parameter $\tilde{A}$.  Linear solutions do not depend on this parameter; this dependence on $\tilde{A}$ is entirely a nonlinear effect.  In Figure \ref{Surfaces}, we present bifurcation surfaces upon which traveling waves exist for both the Stokes wave case, and the depression Wilton ripple (where the center point is a local minimum).  In each case we see that, for small $\tilde{A},$ branches terminate in self-intersection, until a critical value after which the 
extreme wave on a branch is a static wave.   

% Convergence
The majority of the computations presented here used $N=128$ or $N=256$ points equally spaced in arc-length to discretize the interface, and a tolerance of $10^{-8}$ for the residual of the quasi-Newton solver.  The Fourier spectrum an extreme configuration is reported at these two resolutions in Figure \ref{Convergence}.  Notice that at $N=128$, the spectrum has decayed to approximately the residual tolerance, where at $N=256$ it has decayed far below, near machine precision.  In terms of the profiles and surfaces presented here, these resolutions are indistiguishable.  In the left panel of Figure \ref{Convergence}, the difference in the speed-amplitude curves at these two resolutions is presented.  The upon zooming in at the extreme portion of the speed-amplitude curve we see a departure on the order of $10^{-3}$.  This loss of digits is due to the difficulty of computing a singular integral on a near-intersecting interface.  This difficulty is known, see \cite{helsing2008evaluation}.

\section{Conclusion}
In the present work and \cite{AkersAmbroseSulonPeriodicHydroelasticWaves}, we have studied spatially periodic traveling waves for interfacial hydroelastic waves with and without mass.  Our results include existence theory and computational results in most cases.  The cases stem from the dimension of the kernel of the relevant linearized operator.  We have demonstrated that the linearized operator has an at most two-dimensional kernel, and we treated the one-dimensional case in \cite{AkersAmbroseSulonPeriodicHydroelasticWaves} both analytically and numerically.  In the present work, we treated both non-resonant and resonant cases of a two-dimensional kernel, and also gave a non-rigorous asymptotic treatment and a numerical study of interfacial hydroelastic Wilton ripples as particular configurations in the resonant case.  We aim to provide a proof of existence of such Wilton ripples (maintaining a fixed value of the parameter $\tau_{1}$)  in a future work.

\bibliography{references}{}

\begin{thebibliography}{10}

\bibitem{AblowitzFokasComplexVar}
M.J. Ablowitz and A.S. Fokas.
\newblock {\em Complex Variables: Introduction and Applications}.
\newblock Cambridge Texts Appl. Math. Cambridge University Press, Cambridge,
  1997.

\bibitem{AkersAmbroseWrightTravelingWavesSurfaceTension}
B.~Akers, D.M. Ambrose, and J.D. Wright.
\newblock Traveling waves from the arclength parameterization: Vortex sheets
  with surface tension.
\newblock {\em Interfaces Free Bound.}, 15(3):359--380, 2013.

\bibitem{akers2016high}
B.F. Akers.
\newblock High-order perturbation of surfaces short course: traveling water
  waves.
\newblock In {\em Lectures on the theory of water waves}, volume 426 of {\em
  London Math. Soc. Lecture Note Ser.}, pages 19--31. Cambridge Univ. Press,
  Cambridge, 2016.

\bibitem{akersAmbrosePondWright}
B.F. Akers, D.M. Ambrose, K.~Pond, and J.D. Wright.
\newblock Overturned internal capillary-gravity waves.
\newblock {\em Eur. J. Mech. B Fluids}, 57:143--151, 2016.

\bibitem{AkersAmbroseSulonPeriodicHydroelasticWaves}
B.F. Akers, D.M. Ambrose, and D.W. Sulon.
\newblock Periodic traveling interfacial hydroelastic waves with or without
  mass.
\newblock {\em Z. Angew. Math. Phys.}, 68:141, 2017.

\bibitem{akers2012wilton}
B.F. Akers and W.~Gao.
\newblock Wilton ripples in weakly nonlinear model equations.
\newblock {\em Commun. Math. Sci}, 10(3):1015--1024, 2012.

\bibitem{Alben2008}
S.~Alben and M.J. Shelley.
\newblock Flapping states of a flag in an inviscid fluid: bistability and the
  transition to chaos.
\newblock {\em Phys. Rev. Lett.}, 100(7):074301, 2008.

\bibitem{AmbroseWellPosedness2003}
David~M. Ambrose.
\newblock Well-posedness of vortex sheets with surface tension.
\newblock {\em SIAM J. Math. Anal.}, 35(1):211--244, 2003.

\bibitem{AmbroseZeroSurfaceTensionLimit2DDarcy}
David~M. Ambrose.
\newblock The zero surface tension limit of two-dimensional interfacial {D}arcy
  flow.
\newblock {\em J. Math. Fluid Mech.}, 16(1):105--143, 2014.

\bibitem{ambroseSiegelWPHydro}
D.M. Ambrose and M.~Siegel.
\newblock Well-posedness of two-dimensional hydroelastic waves.
\newblock {\em Proc. Roy. Soc. Edinburgh Sect. A}, 147(3):529--570, 2017.

\bibitem{AmbroseStraussWrightGlobBifurc}
D.M. Ambrose, W.A. Strauss, and J.D. Wright.
\newblock Global bifurcation theory for periodic traveling interfacial
  gravity-capillary waves.
\newblock {\em Ann. Inst. H. Poincar{\'{e}} Anal. Non Lin{\'{e}}aire},
  33(4):1081 -- 1101, 2016.

\bibitem{baldiToland1}
P.~Baldi and J.F. Toland.
\newblock Bifurcation and secondary bifurcation of heavy periodic hydroelastic
  travelling waves.
\newblock {\em Interfaces Free Bound.}, 12(1):1--22, 2010.

\bibitem{baldiToland2}
P.~Baldi and J.F. Toland.
\newblock Steady periodic water waves under nonlinear elastic membranes.
\newblock {\em J. Reine Angew. Math.}, 652:67--112, 2011.

\bibitem{Broyden}
C.G. Broyden.
\newblock A class of methods for solving nonlinear simultaneous equations.
\newblock {\em Math. Comp.}, 19:577 -- 593, 1965.

\bibitem{EhrnstromEscherWahlen}
Mats Ehrnstr\"om, Joachim Escher, and Erik Wahl\'en.
\newblock Steady water waves with multiple critical layers.
\newblock {\em SIAM Journal on Mathematical Analysis}, 43(3):1436--1456, 2011.

\bibitem{guyenneParauJFM}
P.~Guyenne and E.I. P\u{a}r\u{a}u.
\newblock Computations of fully nonlinear hydroelastic solitary waves on deep
  water.
\newblock {\em J. Fluid Mech.}, 713:307--329, 2012.

\bibitem{guyenneParauFS}
P.~Guyenne and E.I. P\u{a}�r\u{a}�u.
\newblock Finite-depth effects on solitary waves in a floating ice sheet.
\newblock {\em J. Fluids Struct.}, 49:242 -- 262, 2014.

\bibitem{haupt1988modeling}
S.E. Haupt and J.P. Boyd.
\newblock Modeling nonlinear resonance: A modification to the stokes'
  perturbation expansion.
\newblock {\em Wave Motion}, 10(1):83--98, 1988.

\bibitem{helsing2008evaluation}
J.~Helsing and R.~Ojala.
\newblock On the evaluation of layer potentials close to their sources.
\newblock {\em J. Comp. Phys.}, 227(5):2899--2921, 2008.

\bibitem{hunterNachtergaele}
J.K. Hunter and B.~Nachtergaele.
\newblock {\em Applied Analysis}.
\newblock World Scientific Publishing Co. Inc., 2001.

\bibitem{TolandJonesSymmetry}
Mark Jones and John Toland.
\newblock Symmetry and the bifurcation of capillary-gravity waves.
\newblock {\em Arch. Rational Mech. Anal.}, 96(1):29--53, 1986.

\bibitem{KielhoferBifurcationBook}
H.~Kielh{\"{o}}fer.
\newblock {\em Bifurcation Theory: An Introduction with Applications to Partial
  Differential Equations}, volume 156.
\newblock Springer, New York, 2 edition, 2012.

\bibitem{LiuAmbroseWellPosednessMass}
S.~Liu and D.M. Ambrose.
\newblock Well-posedness of two-dimensional hydroelastic waves with mass.
\newblock {\em J. Differential Equations}, 262(9):4656 -- 4699, 2017.

\bibitem{martin2013existence}
C.I. Martin and B.-V. Matioc.
\newblock Existence of {W}ilton ripples for water waves with constant vorticity
  and capillary effects.
\newblock {\em SIAM J. Appl. Math.}, 73(4):1582--1595, 2013.

\bibitem{mcgoldrick1970wilton}
L.F. McGoldrick.
\newblock On wilton's ripples: a special case of resonant interactions.
\newblock {\em J. Fluid Mech.}, 42(01):193--200, 1970.

\bibitem{MVBWJFM}
P.A. Milewski, J.-M. Vanden-Broeck, and Z.~Wang.
\newblock Hydroelastic solitary waves in deep water.
\newblock {\em J. Fluid Mech.}, 679:628--640, 2011.

\bibitem{MVBWPRSA469}
P.A. Milewski, J.-M. Vanden-Broeck, and Z.~Wang.
\newblock Steady dark solitary flexural gravity waves.
\newblock {\em Proc. R. Soc. Lond. Ser. A Math. Phys. Eng. Sci.},
  469(2150):20120485, 8, 2013.

\bibitem{milewskiWangSAM}
P.A. Milewski and Z.~Wang.
\newblock Three dimensional flexural-gravity waves.
\newblock {\em Stud. Appl. Math.}, 131(2):135--148, 2013.

\bibitem{okamoto2001mathematical}
H.~Okamoto and M.~Sh\={o}ji.
\newblock {\em The mathematical theory of permanent progressive water-waves},
  volume~20.
\newblock World Scientific Publishing Co Inc, 2001.

\bibitem{plotnikovToland}
P.I. Plotnikov and J.F. Toland.
\newblock Modelling nonlinear hydroelastic waves.
\newblock {\em Philos. Trans. R. Soc. Lond. Ser. A Math. Phys. Eng. Sci.},
  369(1947):2942--2956, 2011.

\bibitem{reederShinbrot1}
J.~Reeder and M.~Shinbrot.
\newblock On {W}ilton ripples. {I}. {F}ormal derivation of the phenomenon.
\newblock {\em Wave Motion}, 3(2):115--135, 1981.

\bibitem{reederShinbrot2}
J.~Reeder and M.~Shinbrot.
\newblock On {W}ilton ripples. {II}. {R}igorous results.
\newblock {\em Arch. Rational Mech. Anal.}, 77(4):321--347, 1981.

\bibitem{ShearerSecondaryBifurc}
M.~Shearer.
\newblock Secondary bifurcation near a double eigenvalue.
\newblock {\em SIAM J. Math. Anal.}, 11(2):365--389, 1980.

\bibitem{Squire1995}
V.A. Squire, J.P. Dugan, P.~Wadhams, P.J. Rottier, and A.K. Liu.
\newblock Of ocean waves and sea ice.
\newblock {\em Ann. Rev. of Fluid Mech.}, 27(1):115--168, 1995.

\bibitem{SulonThesis}
D.W. Sulon.
\newblock {\em Analysis for periodic traveling interfacial hydroelastic waves}.
\newblock PhD thesis, Drexel University, 2018.

\bibitem{TolandJones1}
J.~F. Toland and M.~C.~W. Jones.
\newblock The bifurcation and secondary bifurcation of capillary-gravity waves.
\newblock {\em Proc. Roy. Soc. London Ser. A}, 399(1817):391--417, 1985.

\bibitem{toland1}
J.F. Toland.
\newblock Heavy hydroelastic travelling waves.
\newblock {\em Proc. R. Soc. Lond. Ser. A Math. Phys. Eng. Sci.},
  463(2085):2371--2397, 2007.

\bibitem{toland2}
J.F. Toland.
\newblock Steady periodic hydroelastic waves.
\newblock {\em Arch. Ration. Mech. Anal.}, 189(2):325--362, 2008.

\bibitem{trichtchenko2016instability}
O.~Trichtchenko, B.~Deconinck, and J.~Wilkening.
\newblock The instability of wilton ripples.
\newblock {\em Wave Motion}, 66:147--155, 2016.

\bibitem{OlgaPreprint}
O.~Trichtchenko, P.~Milewksi, E.~Parau, and J.-M. Vanden-Broeck.
\newblock Stability of periodic flexural-gravity waves in two dimensions.
\newblock {\em Preprint}.

\bibitem{vanden2002wilton}
J.-M. Vanden-Broeck.
\newblock Wilton ripples generated by a moving pressure distribution.
\newblock {\em J. Fluid Mech.}, 451:193--201, 2002.

\bibitem{WVBMIMA}
Z.~Wang, J.-M. Vanden-Broeck, and P.A. Milewski.
\newblock Two-dimensional flexural-gravity waves of finite amplitude in deep
  water.
\newblock {\em IMA J. Appl. Math.}, 78(4):750--761, 2013.

\bibitem{wilton}
J.R. Wilton.
\newblock {LXXII}. {O}n ripples.
\newblock {\em The London, Edinburgh, and Dublin Philosophical Magazine and
  Journal of Science}, 29(173):688--700, 1915.

\end{thebibliography}
\bibliographystyle{plain}

\end{document}